\newcommand{\ind}{1}
	\newcommand	{\C}			{\mathbb{C}}
	\newcommand	{\N}			{\mathbb{N}}
	\newcommand	{\R}			{\mathbb{R}}
	\newcommand{\eps}{\varepsilon}
	\DeclareMathOperator	{\supp}			{supp}
	\renewcommand{\Re}{\operatorname{Re}}
	\renewcommand{\Im}{\operatorname{Im}}
	\renewcommand{\dint}{\textup{d}}
	\newcommand{\eee}{{\rm e}}
	\newcommand{\ton}{\overset{}{\underset{n\to\infty}\longrightarrow}}
	\newcommand{\leb}		{\ensuremath{\lambda\mkern-8mu\lambda}}
\theoremstyle{plain}
\newtheorem{theorem}{Theorem}[section]
\newtheorem{lemma}[theorem]{Lemma}
\newtheorem{corollary}[theorem]{Corollary}
\newtheorem{prop}[theorem]{Proposition}
\theoremstyle{definition}
\newtheorem{definition}[theorem]{Definition}
\theoremstyle{remark}
\newtheorem{remark}[theorem]{Remark}
\newtheorem*{claim}{Claim}
\numberwithin{equation}{section}
\title{Zeros of polynomial powers under the heat flow}
\date{\today}
\author[Antonia H\"ofert]{Antonia H\"ofert}
\address[Antonia H\"ofert]{Paderborn University, Institute of Mathematics, Warburger Str. 100, 33098 Paderborn, Germany}
\email[Antonia H\"ofert]{ahoefert@math.upb.de}
\author[Jonas Jalowy]{Jonas Jalowy}
\address[Jonas Jalowy]{Paderborn University, Institute of Mathematics, Warburger Str. 100, 33098 Paderborn, Germany}
\email[Jonas Jalowy]{jjalowy@math.upb.de}
\author[Zakhar Kabluchko]{Zakhar Kabluchko}
\address[Zakhar Kabluchko]{Fachbereich Mathematik und Informatik,
	Universit\"at M\"unster,
	Einsteinstra\ss e 62,
	48149 M\"unster,
	Germany}
\email[Zakhar Kabluchko]{zakhar.kabluchko@uni-muenster.de}
\subjclass[2020]{30C15, 31A35, 60B10}
\keywords{Heat flow, empirical zero distribution, Wigner's semicircle law, saddle point method, polynomials in one complex variable}
\begin{document}
\begin{abstract}
We study the evolution of  zeros of high polynomial powers under the heat flow. For any fixed polynomial $P(z)$, we prove that the empirical zero distribution of its heat-evolved $n$-th power converges to a distribution on the complex plane as $n$ tends to infinity. We describe this limit distribution $\mu_t$ as a function of the time parameter $t$ of the heat evolution: For small time, zeros start to spread out in approximately semicircular distributions, then intricate curves start to form and merge, until for large time, the zero distribution approaches a widespread semicircle law through the initial center of mass. The Stieltjes transform of the limit distribution $\mu_t$ satisfies a self-consistent equation and a Burgers' equation. 
The present paper deals with general complex-rooted polynomials for which, in contrast to the real-rooted case, no free-probabilistic representation for $\mu_t$ is available.
\end{abstract}
	
	\maketitle
	
	\section{Introduction}
	
	This paper is concerned with the behavior of zeros of high-degree polynomials undergoing the heat flow and is part of an active research area that investigates how differential operators acting on random functions affect their zeros.
	
	The history of finding the roots of a polynomial affected by a differential operator can be traced back to at least the Gauss--Lucas theorem~\cite[Theorem (6.1)]{Marden}, locating the roots of the derivative inside the convex hull of the roots of the original polynomial.
	Recently, there has been active development in the study of the zero  evolution of polynomials under the action of differential operators, see e.g.~\cite{totik,Byun,OSteiner,Steiner21,HK21,KT22,COR23,diff-paper,martinezfinkelshtein, arizmendi_garza_vargas_perales,BHS24,CampbellAppell,CORuni,JKM1,GNV25} for (repeated) differentiation and \cite{tao_blog1,tao_blog2,rodgers_tao,ZakharLeeYang,hallho,GAF-paper,heatflowrandompoly,JKM2} for the heat flow. We refer the interested reader to these works and references therein for fascinating connections to random matrix theory, free probability, analytic number theory, potential theory, and PDE's.
	
	In this work, we consider a polynomial power
	\begin{align}\label{eq:P^n}
		P^{n}(z)=\prod_{j=1}^d (z-\lambda_j)^{n\alpha_j},\quad n\in \N, z\in \C,
	\end{align}
	for a fixed polynomial $P(z) = \prod_{j=1}^d (z-\lambda_j)^{\alpha_j}$ of degree $\alpha=\sum_j\alpha_j$ with $d\in\N$ distinct zeros $\lambda_1,...,\lambda_d\in\C$, each $\lambda_j$ of multiplicity $\alpha_j\in\N$. We are interested in the evolution of these zeros under the effect of the (holomorphic, backward) heat flow operator. The heat evolution of $P^n$ is defined as a terminating power series
	\begin{align}\label{eq:def_heat_flow}
		P^{n}_t(z):=e^{-\frac{t}{2\alpha n}\partial_z^2}P^{n}(z)\coloneq  \sum_{k=0}^\infty \frac{(-\tfrac t{2\alpha n})^k\partial_z^{2k}}{k!}P^{n}(z),
	\end{align}
	where $\partial_z=\frac{d}{dz}$ is the complex derivative and $t\in \C$ can be seen as a ``time'' parameter, allowed to be complex. We define the empirical zero distribution of $P_t^n$ as $\mu_{n,t}\coloneq\frac 1 {\alpha n}\sum_{\{\lambda:P_t^n(\lambda)=0\}}\delta_\lambda$, counting multiplicities, and our aim is to find and describe its weak limit $\mu_t$ as $n\to\infty$.
	
	\subsection*{Background}
	Our motivation to identify the limiting zero distribution $\mu_t$ of $P^{n}_t$ is threefold.
	
	\textsc{(i)} Let us begin with the simplest setting of a monomial $P^n(z)=z^n$, whose zero distribution converges as $n \to\infty$ to the Dirac measure $\mu_0=\delta_0$. The backward heat flow maps $P^n$ to a rescaled Hermite polynomial $P_t^n(z)=  (t/n )^{n/2} \mathrm{He}_n(z\sqrt {n/t})$, whose empirical zero distribution converges to the semicircular law $\mathsf{sc}_t$ of variance $t>0$ with Lebesgue density $x\mapsto \frac{1}{2\pi t}\sqrt{4t-x^2}$, supported on the interval $[-2\sqrt t, 2\sqrt t]$. For a general sequence of \emph{real-rooted} polynomials $(Q_n(z))_{n\in \N}$ having an asymptotic distribution of zeros $\mu_0$, the asymptotic zero distribution of the heat-evolved polynomial $e^{-\frac{t}{2 n}\partial_z^2}Q_n(z)$ is known to be the free convolution $\mu_0\boxplus\mathsf{sc}_t$ of the initial distribution $\mu_0$ with the semicircle law; see \cite{ZakharLeeYang,VW22} and \cite[Proposition 4.3.10]{AGZ} for a related result. However, the free convolution interpretation breaks down for non-real-rooted polynomials as in our setting and we aim to identify the novel limiting zero distribution $\mu_t$ on $\C$.
	
	\textsc{(ii)} More generally, consider the following open problem, see e.g.~\cite{hallho}:	
		\begin{quotation}
		\begin{center}	\emph{For an arbitrary distribution $\mu_0$ on $\C$ that is the limiting zero distribution of a sequence of degree-$n$ polynomials $(Q_n(z))_{n\in \N}$ as $n\to\infty$, what is the limiting zero distribution $\mu_t$ of the heat-evolved polynomial $e^{-\frac{t}{2 n}\partial_z^2}Q_n(z)$?}
				\end{center}
		\end{quotation}
Apart from distributions $\mu_0$ on $\R$, only the case of isotropic $\mu_0$ on $\C$ has been studied. For every isotropic $\mu_0$, it is possible to construct a natural sequence of polynomials $Q_n$ with independent coefficients having asymptotic zero distribution $\mu_0$, see \cite{KZ14}. In this setting,  the above question has been answered in \cite{heatflowrandompoly}. By making crucial use of the independence of coefficients, the authors obtain several descriptions of $\mu_t$, for instance as a push-forward of $\mu_0$ under a transport map $T_t$ and via a Hamilton-Jacobi PDE satisfied by the corresponding logarithmic potentials.

The present article proves analogous results in the case where the initial distribution $\mu_0=\frac 1 \alpha \sum_{k=1}^d\alpha_k\delta_{\lambda_k}$ is discrete and for the polynomial power $P^n$ of degree $\alpha n$. In particular, the present article answers the above question for a dense family of initial distributions $\mu_0$.

We expect that $\mu_t$ is essentially universal, i.e.~it depends only on the (global) limit zero distribution $\mu_0$ of the polynomials $Q_n$, see \cite[Conjecture 2.14]{heatflowrandompoly}. However, local clusters of initial zeros seem to arrange themselves in lines under the heat flow (see \cite[Figure 5]{heatflowrandompoly}). The zeros of polynomial powers in \eqref{eq:P^n} are highly clustered by definition, and the limiting distribution $\mu_t$ of $P_t^n$ in our setting will indeed be supported on one-dimensional curves.
	
\textsc{(iii)} Similar line-formation phenomena appear for repeated differentiation $\partial_z^{[t n]}Q_n(z)$ of high-degree polynomials $Q_n$, see \cite{BHS24,HK21,KabRep,OSteiner,diff-paper,GNV25}. Repeated differentiation of polynomial powers has been investigated in \cite{BHS24}. Assuming  $\alpha_1=\dots=\alpha_d=1$, these authors determined  the limiting zero distribution of $\partial_z^{[t n]}P^n(z)$ in terms of its Stieltjes transform. In this work, we will not merely adapt the saddle point method used in \cite{BHS24}, but we will deepen the analysis to prove significantly more explicit descriptions of $\mu_t$ in the generalized setting of multiplicities $\alpha_j\neq 1$.
	
	\subsection*{Summary of the Results}
	We prove that, as $n\to\infty$, the zero distribution of the heat-evolved polynomial powers $P_t^n$ converges weakly to a certain compactly supported probability measure $\mu_t$ on $\mathbb C$ which we shall characterize in the following. It is natural to describe our main results along the time evolution $t\ge 0$, beginning with the initial distribution $\mu_0=\frac 1 \alpha \sum_{k=1}^d\alpha_k\delta_{\lambda_k}$.
	\begin{itemize}
		\item As $t\to 0$, we zoom into a scaling window of size $\sqrt t$ centered at an initial zero $\lambda_j$ and identify a semicircle law of total mass $\alpha_j/\alpha$ as the vague limit of $\mu_t(\lambda_j+\cdot \sqrt{t\alpha_j/\alpha})$, see Theorem \ref{thm:small_t}.
		\item If $t>0$ is sufficiently small, then $\mu_t$ is supported on a union of closed disks of radii $(2+\eps)\sqrt{ t\alpha_j/\alpha}$ centered at the zeros  $\{\lambda_j\}_{j=1,\dots,d}$, see Corollary \ref{cor:small_t}.
		\item For arbitrary $t>0$, $\mu_t$ is supported on a union of finitely many smooth curves, see Theorem \ref{thm:description}. The distribution $\mu_t$ is characterized by a self-consistent equation for the Stieltjes transform, via its logarithmic potential, or its density (see Theorem~\ref{thm:description} and Corollary \ref{cor:density}).
		\item If $t>0$ is large, then the support of $\mu_t$ within any sufficiently large disk will approach a horizontal straight segment through the center of mass $\frac 1  \alpha \sum_{j=1}^d\alpha_j\lambda_j$, see Theorem \ref{thm:large_t}.
		\item As $t\to \infty$, we zoom out by $\sqrt t$ and will always retrieve a standard semicircle law as the weak limit of $\mu_t(\sqrt t \cdot)$, see Theorem \ref{thm:semicircle}.
		\item For arbitrary $t\in\C$, $\mu_t$ is the push-forward under the rotation $z\mapsto z e^{i \arg(t)/2}$ of the limiting zero distribution obtained by applying the heat flow at time $|t|>0$ to $P^n(e^{i\arg (t)/2}z)$, see Theorem \ref{thm:measure}.
		\item We prove the Hamilton-Jacobi PDE for the logarithmic potential of $\mu_t$ and the Burgers' equation for its Stieltjes transform, analogous to \cite[\S 5.1]{heatflowrandompoly}, see Theorem \ref{thm:PDEs}.
	\end{itemize}
	Figure \ref{fig:heat_flow} provides a simulation illustrating our results.
	
	\begin{figure}[t]
		\centering
		\includegraphics[width=0.33\linewidth]{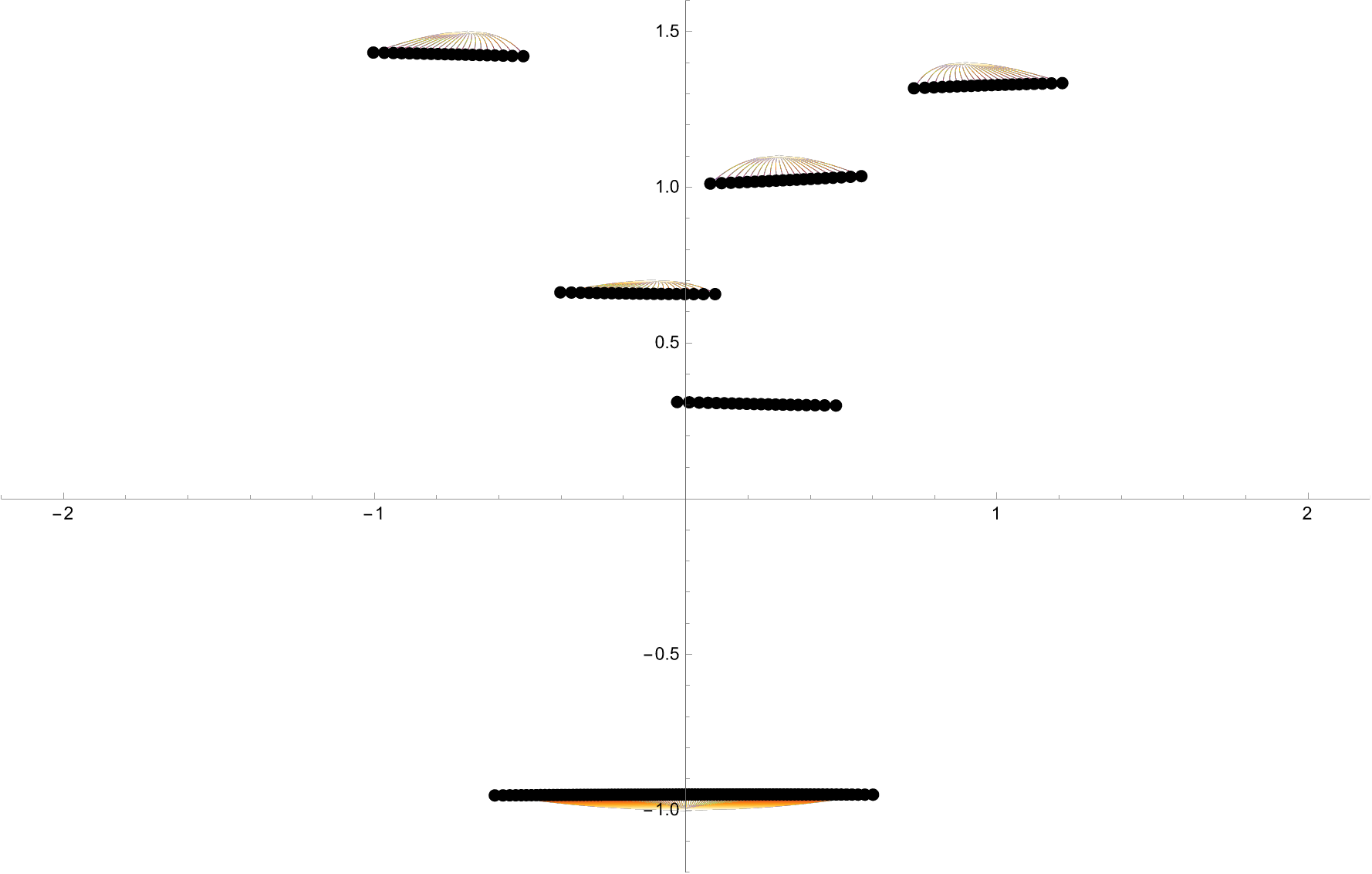}
		\includegraphics[width=0.33\linewidth]{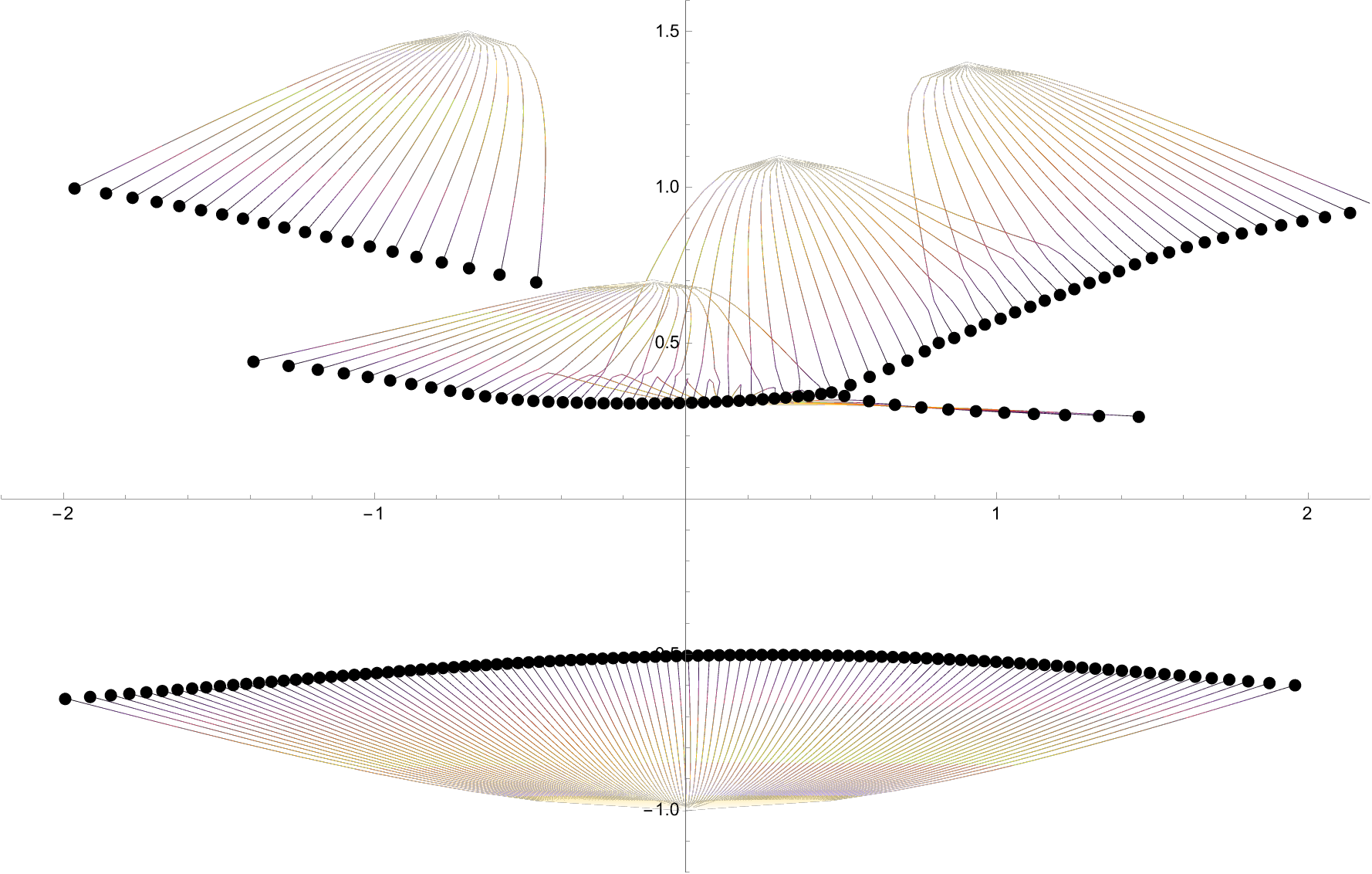}\raisebox{2.3em}{\includegraphics[width=0.33\linewidth]{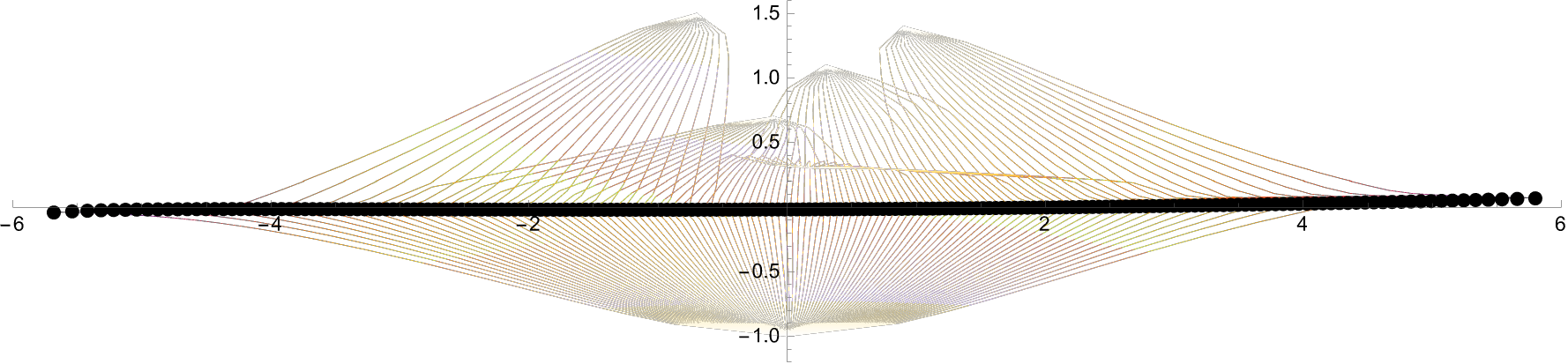}}
		\caption{Evolution of heat-evolved polynomial powers $P_t^n$ for $d=6$ different zeros, $\lambda_1=-i$ with multiplicity $\alpha_1=5$ and five simple zeros $\lambda_2,\dots\lambda_6$ somewhere in the upper half-plane. The $\alpha n=200$ zeros are depicted as black dots with their trajectories in orange. For small time $t=1/5$ (left), we see small semicircle components of $\mu_t$ emerging from the respective initial zeros $\lambda_k$. At arbitrary time $t\approx 1.8$ (center) these lines become intricate to describe analytically and start to merge like a ``zipper''. For large time $t=10$ (right) only one curve remains in the support of $\mu_t$, passing through the center of mass (here, $=0$), which unfolds into the semicircular law when zooming out.}
		\label{fig:heat_flow}
	\end{figure}
	
	\subsection*{Structure of the Paper}
In the upcoming Section 2, we introduce necessary technical tools and present our main results rigorously.
		
		The proof of the existence result is carried out in Section 3.
		We represent the heat-evolved polynomial as a contour integral, and analyze its asymptotic behavior via the saddle point method. This yields convergence of the logarithmic potential of the empirical zero distribution. Additional arguments lift it  to weak convergence of the distributions. Here, we follow and adapt the approach of \cite{BHS24} who studied the distribution of zeros of repeatedly differentiated polynomial powers. However,  our focus lies on probabilistic and analytic methods rather than on aspects of algebraic geometry.
	
	In Section 4, we will derive the explicit descriptions of $\mu_t$ for small and large $t$ and their connection to the semicircle law. These results are not available in the setting of repeated differentiation. In this second technical core of the paper, we shall precisely locate the saddle points by deriving their asymptotics in the regimes of small and large $t$. We use this to identify the saddle point with dominating contribution (which corresponds to the inverse of the transport map $T_t$ introduced in \cite{heatflowrandompoly}).

	 We finish the paper with the derivation of the PDE's and a simple example where all objects from the previous steps can be made explicit.
	
	\section{Main Results}
	\label{sec:main_results}
	Our first main result is the existence of a limiting measure. Recall that we are working in the setting of~\eqref{eq:P^n}, \eqref{eq:def_heat_flow}.  We  define the empirical zero distribution of a heat-evolved polynomial power $P^n_t$ as $\mu_{n,t}\coloneq\frac 1 {\alpha n}\sum_{\{\lambda:P_t^n(\lambda)=0\}}\delta_\lambda$, counting multiplicities.
	\begin{theorem}
		\label{thm:measure}
		For each fixed $t\in\C$, the empirical zero distribution $\mu_{n,t}$ of $P_t^n$ converges weakly to some compactly supported probability measure $\mu_t$ on the complex plane as $n\to\infty$. The limiting measure is singular with respect to the Lebesgue measure on $\C$.
\\
If $t=|t|e^{i\theta}\in\C$ and $\tilde P^n(z):=P^n(e^{i\theta/2}z)$ is the initial polynomial power with rotated zeros, then  $\mu_t$ is the push-forward of $\tilde \mu_{|t|}$ under the rotation map $z\mapsto z e^{i \theta/2}$, where $\tilde \mu_{|t|}$ is the limiting zero distribution of $\tilde P_{|t|}^n$.
	\end{theorem}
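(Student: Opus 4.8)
The plan is to first reduce, by an exact algebraic identity, to the case of positive real time, and then to treat $t>0$ via a steepest-descent analysis of a contour integral. For $t=|t|e^{i\theta}\neq 0$, differentiating $P^n(e^{i\theta/2}z)$ term by term gives $\partial_z^{2k}\bigl(P^n(e^{i\theta/2}z)\bigr)=e^{ik\theta}(P^n)^{(2k)}(e^{i\theta/2}z)$ for every $k$, whence, by \eqref{eq:def_heat_flow},
\[
\tilde P^n_{|t|}(z)=\sum_{k\ge 0}\frac{(-\tfrac{|t|}{2\alpha n})^k}{k!}\,\partial_z^{2k}\tilde P^n(z)=\sum_{k\ge 0}\frac{(-\tfrac{t}{2\alpha n})^k}{k!}\,(P^n)^{(2k)}(e^{i\theta/2}z)=P^n_t(e^{i\theta/2}z).
\]
Thus $\zeta$ is a zero of $P^n_t$ if and only if $e^{-i\theta/2}\zeta$ is a zero of $\tilde P^n_{|t|}$, so $\mu_{n,t}$ is the push-forward of $\tilde\mu_{n,|t|}$ under the rotation $z\mapsto e^{i\theta/2}z$. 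Since this map is a homeomorphism, weak convergence of $\tilde\mu_{n,|t|}$ forces weak convergence of $\mu_{n,t}$ to the corresponding push-forward, and preserves compact support and singularity with respect to Lebesgue measure. As rotating the $\lambda_j$ produces another initial polynomial power of the form \eqref{eq:P^n}, it therefore suffices to prove the first assertion for $t>0$ (the case $t=0$ being trivial, with $\mu_0=\tfrac1\alpha\sum_j\alpha_j\delta_{\lambda_j}$).

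For $t>0$ I would use the Weierstrass-transform representation of the backward heat flow with an imaginary shift,
\[
P^n_t(z)=\sqrt{\tfrac{\alpha n}{2\pi t}}\int_{\R}P^n(z+is)\,e^{-\alpha n s^2/(2t)}\,\dd s=\sqrt{\tfrac{\alpha n}{2\pi t}}\int_{\R}e^{\,n\Phi_z(s)}\,\dd s,\qquad \Phi_z(s)=\sum_{j=1}^d\alpha_j\log(z+is-\lambda_j)-\frac{\alpha s^2}{2t},
\]
the normalization being fixed by matching the Hermite case $P^n(z)=z^n$. Analytically continuing $\Phi_z$ in $w$, its saddle points solve $\Phi_z'(w)=\sum_j\frac{i\alpha_j}{z+iw-\lambda_j}-\frac{\alpha w}{t}=0$, a polynomial equation of degree $d+1$; deforming $\R$ onto a steepest-descent contour through the relevant saddle(s) and invoking the saddle-point method gives, locally uniformly for $z$ away from a locally finite union of real-analytic arcs (those where two saddles tie in $\Re\Phi_z$ or where saddles coalesce), that
\[
\frac{1}{\alpha n}\log\bigl|P^n_t(z)\bigr|\longrightarrow U_t(z):=\frac1\alpha\max_{w^*}\Re\,\Phi_z(w^*),
\]
the maximum running over the saddles that are active on the descent path.

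Since $P^n_t$ has degree $\alpha n$ and leading coefficient $1$, the left-hand side equals $\int\log|z-w|\,\dd\mu_{n,t}(w)$, so the above is convergence of logarithmic potentials at Lebesgue-a.e.\ $z$. A uniform tail estimate on the contour integral confines all zeros of $P^n_t$ to a single fixed disk, which gives tightness of $(\mu_{n,t})_n$ and compact support of any subsequential limit; the potentials $\int\log|z-w|\,\dd\mu_{n,t}(w)$ are uniformly locally $L^1$ and hence equi-integrable, so their a.e.\ convergence upgrades to $L^1_{\mathrm{loc}}$ convergence, and applying $\Delta$ yields $\mu_{n,t}\to\mu_t:=\tfrac1{2\pi}\Delta U_t$ weakly, $\mu_t$ being a probability measure since no mass escapes the fixed disk. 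Finally, on the open complement of the exceptional arcs the active saddle $w^*(z)$ is unique and holomorphic in $z$, so there $U_t=\Re\bigl(\tfrac1\alpha\Phi_z(w^*(z))\bigr)$ is harmonic; hence $2\pi\mu_t=\Delta U_t$ is carried by the exceptional set, a Lebesgue-null union of curves, proving that $\mu_t$ is singular with respect to planar Lebesgue measure.

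The main obstacle is the global steepest-descent bookkeeping: determining which saddles are active, establishing that the convergence of $\tfrac1{\alpha n}\log|P^n_t(z)|$ is locally uniform off the exceptional curves, checking that these curves are Lebesgue-null, and ruling out zeros escaping to infinity. This is the point at which we follow and adapt the contour analysis of \cite{BHS24}; pinning down the dominant saddle here is also exactly what, in the later sections, gets matched with the transport map $T_t$.
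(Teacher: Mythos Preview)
Your strategy is the same as the paper's: reduce to $t>0$ by the rotation identity, write $P^n_t$ as a Gaussian-type contour integral, apply steepest descent to get pointwise convergence of $\tfrac{1}{\alpha n}\log|P^n_t|$ on a full-measure set, lift to weak convergence via potential theory, and read off singularity from harmonicity of $U_t$ off a finite curve system. Two of your technical steps are handled differently in the paper and are worth knowing about. First, tightness: the paper explicitly remarks that a soft tail estimate on the integral is not how confinement of zeros is obtained here; instead it invokes a theorem of Rahman--Schmeisser on zero locations of the composition $P^n\boxplus_{\alpha n}(\text{rescaled Hermite})$, which places every zero of $P^n_t$ in $\bigcup_j B_{2\sqrt t(1+o(1))}(\lambda_j)$ for all $n$. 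Second, the upgrade from pointwise to $L^1_{\mathrm{loc}}$ convergence of $U_{n,t}$: your implication ``uniformly locally $L^1$ hence equi-integrable'' is not automatic and would need a separate argument; the paper avoids this by citing H\"ormander's compactness theorem for locally upper-bounded sequences of subharmonic functions, which yields $L^1_{\mathrm{loc}}$ subsequential limits directly (once $-\infty$ is excluded by the pointwise limit), after which a subsequence-of-subsequence argument identifies the full limit. Finally, the paper is careful that the dominant saddle is not the global maximizer of $\Re\Phi_z$ over all saddles but a specific ``maximally relevant'' one, singled out by a Morse-theoretic analysis of the sublevel sets of $G(z,\cdot)$; your phrase ``active on the descent path'' gestures at this, and the paper's Lemma on the choice of contour makes it precise.
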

	
	Due to the last statement, there is no loss of generality in assuming that  $t>0$ for simplicity and we will do so in the sequel.
	
	In the following, we will describe the limiting measure $\mu_t$ in more detail. However, since it is singular and, as we shall see, supported on a finite family of implicitly defined smooth curves, the explicit description becomes subtle. As announced in the introduction, we shall prove Theorem \ref{thm:measure} via the saddle point method, where the function
	\begin{align*}
		G(z,u)
		=\frac{1}{\alpha}\sum_{j=1}^d\alpha_j\log |u-\lambda_j|+\Re\left(\frac{(z-u)^2}{2t}\right)
	\end{align*}
	will be crucial, and we will discuss its analytic properties in Section \ref{sec:sp_method}. It is the real part of a multivalued holomorphic function $u\mapsto g(z,u)$ whose saddle points are given by the equation $\partial_u g(z,u)=0$, which turns out to be
	\begin{align}\label{eq:sp_eqn_1}
		u+\frac{t}{\alpha}\sum_{j=1}^d\frac{\alpha_j}{u-\lambda_j}=z.
	\end{align}
		Denote by $u_t^1(z),\dots, u_t^{d+1}(z)$ the solutions to the saddle point equation \eqref{eq:sp_eqn_1} for all $z\in\C\setminus B$, where $B$ is the set of branch points $z$ for which \eqref{eq:sp_eqn_1} has solutions of multiplicity $>1$. Let $u_t^*(z)$ be one particular branch of the solution, to be defined later.
	Further, for fixed $t>0$, define the domain
	\begin{align*}
		\mathcal D_t:=\{z\in\C \setminus B: G(z, u_t^i(z))\neq G(z, u_t^j(z))\ \forall i\neq j\}.
	\end{align*}

	We will provide more details in Section \ref{sec:Saddlepointequation} below, see in particular Lemma \ref{lem:domain} and Lemma \ref{lem:saddle}. Define the logarithmic potential of $\mu_t$ by
	\begin{align*}
		U_t(z)\coloneq \int_\C \,\log |z-\xi|\,d\mu_t(\xi),\quad z\in\C
	\end{align*}
	and its Stieltjes transform by
	\begin{align*}
		m_t(z)\coloneq  \int_\C\,\frac{\mu_t(dy)}{z-y}, \quad z\in\mathcal D_t.
	\end{align*}
	Recall the definition of the Wirtinger derivatives
	\begin{align*}
		\partial_z =\frac{1}{2}\Bigl(\frac{\partial}{\partial x}-i\frac{\partial}{\partial y} \Bigr)\quad\text{and}\quad
		\partial_{\bar z}=\frac{1}{2}\Bigl(\frac{\partial}{\partial x}+i\frac{\partial}{\partial y} \Bigr),
	\end{align*}
	for $z=x+iy$, $x,y\in\R$.
It is well known that $m_t(z)=2\partial_z U_t(z)$ in the sense of distributions and that $\mu_t$ can be represented via the distributional Laplacian $\Delta=4\partial_z\partial_{\bar z}$ as
	\begin{align*}
		\mu_t=\frac  1 {2\pi}\Delta U_t.
	\end{align*}
In particular, the logarithmic potential and the Stieltjes transform determine $\mu_t$ uniquely as follows.

\begin{theorem}\label{thm:description}
For any $t>0$, the limiting distribution $\mu_t$ satisfies the following.
\begin{enumerate}
\item Its Stieltjes transform $m_t$ is given by
\begin{align}\label{eq:mtut}
m_t(z)=\frac 1 t (z-u^*_t(z)), \quad z\in\mathcal D_t
\end{align}
and solves the equation
\begin{align}\label{eq:selfcons_eqn}
m_t(z)=\frac 1 \alpha \sum_{j=1}^d\frac{\alpha_j}{z-tm_t(z)-\lambda_j}, \quad z\in\mathcal D_t .
\end{align}
\item Its logarithmic potential satisfies
\begin{align}\label{eq:U_t=G}
U_t(z)=G(z,u_t^*(z)), \quad z\in\mathcal D_t .
\end{align}
\item It is compactly supported on finitely many smooth curves (plus end points) within
 $$\supp(\mu_t)\subseteq\mathcal D_t^c= \{z\in\C\setminus B: G(z,u_t^i(z))=G(z,u_t^j(z))\text{ for some }1\le i< j\le d+1\}\cup B.$$
\end{enumerate}
\end{theorem}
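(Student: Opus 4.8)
The plan is to read all three assertions off the saddle point representation that also underlies Theorem~\ref{thm:measure}. Since $n\alpha_j\in\N$, for $t>0$ the backward heat flow is the (single-valued, entire in $u$) contour integral
\begin{equation*}
P^n_t(z)=-i\sqrt{\tfrac{\alpha n}{2\pi t}}\int_{z-i\infty}^{z+i\infty} e^{\alpha n\, g(z,u)}\,\dd u,\qquad g(z,u)=\frac1\alpha\sum_{j=1}^d\alpha_j\log(u-\lambda_j)+\frac{(z-u)^2}{2t},
\end{equation*}
with $G=\Re g$ and with the critical points of $u\mapsto g(z,u)$ described exactly by~\eqref{eq:sp_eqn_1}; the sub-exponential prefactor does not affect $U_{n,t}(z):=\frac1{\alpha n}\log|P^n_t(z)|$, which is the logarithmic potential of $\mu_{n,t}$ because $P^n_t$ is monic of degree $\alpha n$. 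Deforming the contour through the saddle of largest real part and controlling the error — the analytic core of Section~\ref{sec:sp_method}, carried out there in the proof of Theorem~\ref{thm:measure} — yields $U_{n,t}(z)\to\max_{1\le i\le d+1}G(z,u^i_t(z))=G(z,u^*_t(z))$ locally uniformly on $\mathcal D_t$, and the weak convergence $\mu_{n,t}\to\mu_t$ identifies the limit with $U_t$. This is assertion~(2), and it is the only input needed for the rest.

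For assertion~(1), note first that on the open set $\mathcal D_t$ the branch $z\mapsto u^*_t(z)$ is single-valued and holomorphic: the $d+1$ solutions of the algebraic equation~\eqref{eq:sp_eqn_1} are holomorphic on $\C\setminus B$, and because their $G$-values are pairwise distinct on $\mathcal D_t$ there is no monodromy preventing a consistent branch choice (see Lemma~\ref{lem:saddle}). Hence $z\mapsto g(z,u^*_t(z))$ is holomorphic on $\mathcal D_t$, so $U_t=\Re g(\cdot,u^*_t(\cdot))$ is harmonic there; in particular $\mu_t$ puts no mass on $\mathcal D_t$, so $m_t$ is holomorphic on $\mathcal D_t$ and equals $2\partial_zU_t$. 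Since $2\partial_z\Re h=h'$ for holomorphic $h$,
\begin{equation*}
m_t(z)=\frac{\dd}{\dd z}\,g(z,u^*_t(z))=\partial_z g(z,u)\big|_{u=u^*_t(z)}+\partial_u g(z,u)\big|_{u=u^*_t(z)}\,\frac{\dd u^*_t}{\dd z}(z),
\end{equation*}
and the second term vanishes because $u^*_t(z)$ is a critical point, $\partial_u g(z,u^*_t(z))=0$. As $\partial_z g(z,u)=\tfrac{z-u}{t}$, we get $m_t(z)=\tfrac1t(z-u^*_t(z))$, which is~\eqref{eq:mtut}; substituting $u^*_t(z)=z-tm_t(z)$ into~\eqref{eq:sp_eqn_1} and rearranging gives the self-consistent equation~\eqref{eq:selfcons_eqn}.

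For assertion~(3), the harmonicity of $U_t$ on $\mathcal D_t$ just used gives $\mu_t=\frac1{2\pi}\Delta U_t=0$ on $\mathcal D_t$, hence $\supp\mu_t\subseteq\mathcal D_t^c$; compactness of the support is part of Theorem~\ref{thm:measure}. It remains to analyze $\mathcal D_t^c$. Locally near any point of $\C\setminus B$, after labeling the branches, $\mathcal D_t^c\setminus B$ is the finite union over pairs $i<j$ of the level sets $\{z:\Re(g_i-g_j)(z)=0\}$ with $g_i(z):=g(z,u^i_t(z))$. Each difference $g_i-g_j$ is non-constant: this is visible from the asymptotics of the $d+1$ branches of~\eqref{eq:sp_eqn_1} as $z\to\infty$, where one branch behaves like $z$ and the others like $\lambda_k+t\alpha_k/(\alpha z)$, making the $g_i$ pairwise distinct non-constant functions near $\infty$ (so that a neighborhood of $\infty$ lies in $\mathcal D_t$). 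Since the zero set of the real part of a non-constant holomorphic function is a real-analytic curve away from the isolated critical points of that function, $\mathcal D_t^c$ is contained in a finite union of smooth arcs together with finitely many exceptional points ($B$, critical points, and intersections of arcs); the complete structural statement is Lemma~\ref{lem:domain}. As $\mu_t$ is a compactly supported measure carried by this set, it lives on finitely many smooth curves plus their endpoints.

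The routine parts are (1) and (2) once the saddle point asymptotics of Section~\ref{sec:sp_method} are available; the genuine difficulty lies in (3), i.e.\ in proving that $\mathcal D_t^c$ is as tame as claimed — that no two saddle values coincide on an open set, that only finitely many arcs occur, and that they are smooth — which rests on a careful study of the branch structure of the algebraic function defined by~\eqref{eq:sp_eqn_1} (the content of Lemma~\ref{lem:domain}). A related point demanding care is the single-valuedness and holomorphy of $u^*_t$ on $\mathcal D_t$, which is precisely what makes the chain rule in~(1) legitimate and which again relies on the separation of $G$-values defining $\mathcal D_t$.
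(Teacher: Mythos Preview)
Your overall architecture matches the paper's proof: (2) from the saddle point analysis of Section~\ref{sec:sp_method}, (1) via the chain rule using $\partial_u g(z,u_t^*(z))=0$, and (3) from harmonicity of $U_t$ on $\mathcal D_t$ combined with Lemma~\ref{lem:domain}. However, there is one genuine error that propagates.

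You write $U_{n,t}(z)\to\max_{1\le i\le d+1}G(z,u^i_t(z))=G(z,u^*_t(z))$, identifying $u_t^*(z)$ with the saddle that maximizes $G$ over all saddles. This is \emph{not} how $u_t^*$ is defined, and it is false in general. The paper explicitly introduces \emph{irrelevant} saddle points, meaning saddles with $G(z,u_t^j(z))>G(z,u_t^*(z))$ (see the Definition after Lemma~\ref{lem:saddle} and the caption of Figure~\ref{fig:G} for a concrete example). The maximally relevant saddle is characterized topologically: it is the unique saddle at which the sublevel-set components $\Sigma^+(h)$ and $\Sigma^-(h)$ containing $\pm i\infty$ first merge (Lemma~\ref{lem:saddle}(3)). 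The max-formula $U_t(z)=\max_j G(z,u_t^j(z))$ is only established under extra hypotheses, e.g.\ for large $t$ in Theorem~\ref{thm:large_t}.

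This misidentification also undercuts your justification that $u_t^*$ is single-valued and holomorphic on $\mathcal D_t$. Your argument (``the $G$-values are pairwise distinct, so no monodromy prevents a consistent branch choice'') would work if $u_t^*$ were the branch of largest $G$-value, since that ordering is globally well-defined on $\mathcal D_t$; but it is not that branch. The paper instead invokes Lemma~\ref{lem:sp_nbh}, which shows directly that the topological characterization of Lemma~\ref{lem:saddle} is stable under perturbation of $z$, so that $u_t^*$ locally coincides with a fixed analytic branch $u_t^k$. With this correction in place, your chain-rule computation for (1) and your harmonicity argument for (3) are essentially identical to the paper's.
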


Let us provide some explanatory remarks.

\begin{remark}
	\label{rem:example_hermite}
In the most simple case $d=\alpha_1=1$ of $P^n(z)=(z-a)^n$ for some $a\in\C$, it holds that $P_t^n=(\tfrac{t}{n})^{n/2}\mathrm{He}_n(\sqrt n (z-a) /\sqrt t)$. Hence, the limiting distribution $\mu_t=\mathsf{sc}_t(\cdot-a)$ is the (shifted) semicircle distribution supported on a straight line $\R+a$ and centered at $a$. In this case, \eqref{eq:selfcons_eqn} turns into the well-known self-consistent equation of the semicircle law. See Section \ref{sec:example} for more details on this example.
\end{remark}

\begin{remark}
More generally, if $d\neq 1$ and all $\lambda_j$ are real, then $\mu_t$ is the free convolution $\mu_0\boxplus \mathsf{sc}_t$, see \cite{ZakharLeeYang,VW22}. Indeed, plugging \eqref{eq:mtut} into \eqref{eq:selfcons_eqn} yields the so-called subordination representation $m_t(z)=m_0(u_t^*(z))$ of this free convolution, with subordination function $u_t^*(z)$ being the inverse of $z\mapsto z+tm_0(z)$ according to \eqref{eq:mtut}, see \cite[Proposition 2]{Biane}.
\end{remark}

\begin{remark}
The case of large $d\to\infty$ is formally consistent with results known in the literature. 
	Suppose that $\mu_{P^n}\Rightarrow\nu_0$ as $d,n\to \infty$ for some absolutely continuous $\nu_0$ on $\C$, which corresponds to the random polynomials setting of \cite{heatflowrandompoly}. Formally, we expect $u_t^*(z)$ to converge to some limiting object (corresponding to the inverse transport map $T_t^{-1}=u_t^*$ therein). Then, $m_t(z)=m_0(u_t^*(z))$ continues to hold for $d\to\infty$. It can be shown that this implies $\mu_{P_t^n}\Rightarrow\nu_t$ as $d,n\to\infty$ for the push-forward $\nu_t=\nu_0(u_t^*(\cdot))$ as conjectured in \cite{heatflowrandompoly}. In the notion of transport maps from that paper, the equation $U_t(T_t(w))=U_0(w)+\frac{t}{2}\mathrm{Re}(m_0(w)^2)$ corresponds to \eqref{eq:U_t=G}.
	However, the choice of the branch $u_t^*$ is based on topological arguments, which become increasingly difficult to track under growing $d$. We aim to return to this problem in a forthcoming work.
\end{remark}

Thus, we may imagine the distribution $\mu_t$ to be a free convolution of semicircle distributions supported on some of the curves in $\mathcal{D}_t^c$. Indeed, zooming into initial zeros $\lambda_j$ as $t\to 0$, we see semicircle laws appearing nearby. For any distribution $\mu$, we write $\mu(c\cdot)$ for the push-forward of $\mu$ under the dilation $z\mapsto z/c$.

\begin{theorem}\label{thm:small_t}
	Fix $j\in\{1,\dots , d\}$ and define $T^j: z\mapsto \frac{z-\lambda_j}{\sqrt{t\alpha_j/\alpha}}$. Then we obtain vague convergence of the push-forward $ T_{\#}^j\mu_t:=\mu_t(\lambda_j +\cdot \sqrt{{t\alpha_j}/{\alpha}})\to \frac {\alpha_j} {\alpha} \mathsf{sc}_1$ as $t\to 0$.
\end{theorem}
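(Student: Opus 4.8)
The plan is to work entirely on the level of the Stieltjes transform $m_t$, using the characterization \eqref{eq:mtut}--\eqref{eq:selfcons_eqn}, and to track how the dominating saddle point $u_t^*(z)$ behaves in the scaling window $z = \lambda_j + w\sqrt{t\alpha_j/\alpha}$ as $t\to 0$. Write $c = \sqrt{t\alpha_j/\alpha}$, so the claim $T_\#^j\mu_t \to \frac{\alpha_j}{\alpha}\mathsf{sc}_1$ is equivalent, by the Stieltjes continuity theorem for the subprobability measures $T_\#^j\mu_t$ (total mass $1$, but we only claim vague convergence to a measure of mass $\alpha_j/\alpha$), to pointwise convergence on $\C\setminus\R$ of the rescaled transform. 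A direct computation gives that the Stieltjes transform of $T_\#^j\mu_t$ at a point $w\in\C\setminus\R$ equals $c\, m_t(\lambda_j + wc)$, and the Stieltjes transform of $\frac{\alpha_j}{\alpha}\mathsf{sc}_1$ at $w$ is $\frac{\alpha_j}{\alpha}\cdot\frac{w - \sqrt{w^2-4}}{2}$ (with the branch that is $\sim \frac{\alpha_j}{\alpha}/w$ at infinity). So the whole problem reduces to showing
\[
	c\, m_t(\lambda_j + wc) \;\longrightarrow\; \frac{\alpha_j}{\alpha}\cdot\frac{w-\sqrt{w^2-4}}{2}, \qquad t\to 0,
\]
for each fixed $w$ with $\Im w \neq 0$ (with a little care near $w=0$), plus a tightness/uniform-integrability input to upgrade pointwise convergence of Stieltjes transforms to vague convergence of measures — the latter is free here because all the $T_\#^j\mu_t$ are supported, by Corollary \ref{cor:small_t}, in a fixed bounded region (a disk of radius $2+\eps$) once $t$ is small, so no mass escapes and vague convergence follows.

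**The saddle-point asymptotics.** By \eqref{eq:mtut}, $c\,m_t(\lambda_j+wc) = \frac{c}{t}(\lambda_j + wc - u_t^*(\lambda_j+wc))$. Set $u = u_t^*(\lambda_j + wc)$ and write $u = \lambda_j + c\,v$ for an unknown $v = v(w,t)$; then $c\,m_t = \frac{c^2}{t}(w - v) = \frac{\alpha_j}{\alpha}(w-v)$, so it suffices to show $v(w,t) \to \frac{w+\sqrt{w^2-4}}{2}$ as $t\to 0$. Now substitute $z = \lambda_j + wc$ and $u = \lambda_j + cv$ into the saddle-point equation \eqref{eq:sp_eqn_1}: the term $u$ becomes $\lambda_j + cv$, the $j$-th term of the sum is $\frac{t}{\alpha}\cdot\frac{\alpha_j}{cv} = \frac{t\alpha_j}{\alpha c}\cdot\frac1v = \frac{c^2}{c}\cdot\frac1v = \frac{c}{v}$, and every other term $\frac{t}{\alpha}\cdot\frac{\alpha_k}{\lambda_j + cv - \lambda_k}$ is $O(t) = O(c^2)$ since $\lambda_j\neq\lambda_k$. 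Dividing the resulting identity $\lambda_j + cv + \frac{c}{v} + O(c^2) = \lambda_j + cw$ by $c$ gives $v + \frac1v + O(c) = w$, i.e. $v^2 - wv + 1 = O(c)$. Hence along any sequence $t\to 0$, $v$ converges (after passing to a subsequence) to a root of $v^2 - wv + 1 = 0$, namely $\frac{w\pm\sqrt{w^2-4}}{2}$. Plugging back, $c\,m_t \to \frac{\alpha_j}{\alpha}(w-v) = \frac{\alpha_j}{\alpha}\cdot\frac{w\mp\sqrt{w^2-4}}{2}$, which is exactly the desired semicircle transform provided we select the correct root.

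**Selecting the branch and the main obstacle.** The genuinely delicate point — and the main obstacle — is to verify that $u_t^*$, the particular dominating branch fixed earlier in the paper (Lemma \ref{lem:saddle} / its construction via topological arguments), indeed picks out the root $v = \frac{w+\sqrt{w^2-4}}{2}$ rather than its reciprocal in this scaling window. I expect to resolve this by the same criterion used to define $u_t^*$ globally: among the $d+1$ saddles $u_t^i(z)$, the one with the minimal (or maximal, per convention) value of $G(z,\cdot)$ dominates. Concretely I would identify the $d+1$ solutions in the scaling window — one cluster near $\lambda_j$ scaling like $\lambda_j + cv$ with the \emph{two} roots of $v^2-wv+1=0$, and $d-1$ spectator saddles near the other $\lambda_k$'s, which lie at distance $\Theta(1)$ from $\lambda_j$ and hence at $|z - u|\sim|\lambda_j - \lambda_k|$ — and compare the values of $G$. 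The two "local" saddles give $G(z,u) = \frac1\alpha\sum_k \alpha_k\log|u-\lambda_k| + \Re\frac{(z-u)^2}{2t}$; the $\log|u-\lambda_j| = \log|cv| = \log c + \log|v|$ term contributes the same leading $\frac{\alpha_j}{\alpha}\log c$ to both, the spectator log-terms are also common to leading order, and $\Re\frac{(z-u)^2}{2t} = \Re\frac{c^2(w-v)^2}{2t} = \frac{\alpha_j}{\alpha}\Re\frac{(w-v)^2}{2}$ is $\Theta(1)$, so the tie is broken at order $1$ by comparing $\frac{\alpha_j}{\alpha}\log|v|$ against $\frac{\alpha_j}{\alpha}\Re\frac{(w-v)^2}{2}$ for the two roots; since the product of the two roots is $1$ they have reciprocal moduli, and a short check (using $v + 1/v = w$, so $(w-v)^2 = (1/v)^2$ and $\Re\frac{(w-v)^2}{2} = \Re\frac{1}{2v^2}$) shows the correct branch is the one with $|v|\ge 1$, i.e. $v = \frac{w+\sqrt{w^2-4}}{2}$ for $\Im w > 0$, matching the standard semicircle branch. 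One should also confirm the spectator saddles $u$ near $\lambda_k$ ($k\ne j$) do not dominate — there $\Re\frac{(z-u)^2}{2t} \approx \Re\frac{(\lambda_j-\lambda_k)^2}{2t} \to +\infty$ like $1/t$, so $G\to+\infty$ and (with the convention that the dominating saddle minimizes $G$) these are never selected in the window; this is the step where one must be careful about the sign convention in the definition of $u_t^*$, and I would phrase the comparison so as to be robust to it. Finally, the edge cases $w\in(-2,2)$ on the real axis (where $\sqrt{w^2-4}$ is imaginary and both roots of $v^2-wv+1$ have modulus $1$) need the density statement of Corollary \ref{cor:density} or a direct limiting argument, but since vague convergence only tests against compactly supported continuous functions and $\R$ is $\mu_t$-null away from finitely many curves, this causes no real difficulty.
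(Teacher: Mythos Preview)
Your Stieltjes-transform route and the saddle-point rescaling $u=\lambda_j+cv$ leading to $v+1/v=w+O(c)$ are correct and essentially reproduce the paper's Lemma~\ref{lem:u_t_small_t}; the paper proceeds via the logarithmic potential instead, but that difference is cosmetic.

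The genuine gap is your branch selection. You write that ``the one with the minimal (or maximal, per convention) value of $G(z,\cdot)$ dominates'' and later work ``with the convention that the dominating saddle minimizes $G$''. Neither is correct: the maximally relevant saddle $u_t^*(z)$ is defined in Lemma~\ref{lem:saddle} by a \emph{topological} criterion (the unique critical level at which the sublevel sets $\Sigma^\pm(h)$ first connect $+i\infty$ to $-i\infty$), and in general there exist both saddles with larger $G$ (``irrelevant'') and saddles with smaller $G$ than $u_t^*$. So comparing $G$-values alone cannot identify $u_t^*$. Your spectator estimate also fails as written: $\Re\bigl((\lambda_j-\lambda_k)^2\bigr)/(2t)$ need not tend to $+\infty$ --- for $\lambda_j-\lambda_k$ close to purely imaginary it goes to $-\infty$ --- so spectator saddles can have $G$ far \emph{below} the local ones, and a height comparison does not rule them out.

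The paper's fix is twofold. First (Lemmas~\ref{lem:small_t} and~\ref{lem:ut*_small_t}) it shows, for $z$ just outside $B_{c\sqrt t}(\lambda_j)$, that $u_t^*$ is the saddle near $z$ by constructing explicit \emph{barriers}: one proves $x\mapsto G(z,u_t^{d+1}(z)+x)$ is convex near $x=0$ and continues the barrier through a cone $V_1$ where $G$ stays high, thereby blocking any path in $\Sigma(h)$ from $-i\infty$ to $+i\infty$ at heights below $G(z,u_t^{d+1}(z))$; a dual concavity/low-cone argument produces the required path at higher levels. Second, one uses the stability Lemma~\ref{lem:sp_nbh}: since spectator heights satisfy $|G(z,u_t^k(z))|\gtrsim t^{-1/2}$ while the two local heights are $\sim\frac{\alpha_j}{2\alpha}\log t$, the spectator heights never cross the local ones, so inside the ball the maximally relevant saddle must remain one of the two local candidates; a final barrier argument at $\tilde z=\pm i$ then picks the root $v=\tfrac{1}{2}(w+\sqrt{w^2-4})$ above the curve $\gamma_j$ and its companion below. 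Once this selection is established, your computation $c\,m_t\to\frac{\alpha_j}{\alpha}\cdot\frac{w-\sqrt{w^2-4}}{2}$ goes through and the vague limit follows.
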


For arbitrary times, the geometry of the curves in $\mathcal{D}_t^c$ is difficult to analyze, see Figure \ref{fig:heat_flow}. For large values of $t$, they merge into a single horizontal line as follows. 

\begin{theorem}\label{thm:large_t}
Fix a closed disk $K\subseteq \C$ centered at $\frac 1  \alpha \sum_{j=1}^d\alpha_j\lambda_j$.  Then, for sufficiently large $t>0$,
\begin{align}\label{eq:U=max}
	U_t(z)=\max_{j=1,\dots,d+1}G\big(z,u_t^j(z)\big), \quad z\in K
	\end{align}
	and	$\supp(\mu_t)\cap K$
	consists of a single smooth curve that converges, as $t\to\infty$, in Hausdorff metric to a horizontal line through the center of mass $\frac 1  \alpha \sum_{j=1}^d\alpha_j\lambda_j$.
\end{theorem}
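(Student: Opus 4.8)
The plan is to analyze the saddle point equation \eqref{eq:sp_eqn_1} in the large-$t$ regime. After shifting coordinates so that the center of mass $\frac1\alpha\sum_j\alpha_j\lambda_j$ is at the origin, the natural rescaling is $u=\sqrt t\, v$ and $z=\sqrt t\, w$ (so that $K$ rescaled shrinks to a neighbourhood of the origin). Under this substitution, \eqref{eq:sp_eqn_1} becomes $v+\frac1\alpha\sum_j\frac{\alpha_j}{v-\lambda_j/\sqrt t}=w$, which as $t\to\infty$ degenerates to $v+\frac1v=w$ — the equation for the semicircle law — but with a small perturbation of order $1/\sqrt t$ coming from the displacement of the $\lambda_j$. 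The first step is therefore a perturbative expansion of the $d+1$ roots $u_t^j(z)$: one pair of roots $u_t^\pm(z)$ stays near the two branches of the rescaled semicircle solution $\sqrt t\,\frac{w\pm\sqrt{w^2-4}}{2}$, while the remaining $d-1$ roots cluster near the individual $\lambda_j$ at distance $O(1/\sqrt t)$ (these are the ``spurious'' saddles that sit inside the tiny disks of Corollary \ref{cor:small_t} after the rescaling shrinks everything).

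The second step is to show that on the rescaled disk $K$, the dominant branch $u_t^*(z)$ — the one realizing $U_t(z)=G(z,u_t^*(z))$ via Theorem \ref{thm:description}(2) — is one of the two ``semicircle-type'' saddles $u_t^\pm(z)$, and in fact the one whose value of $G$ is largest; this is exactly the content of \eqref{eq:U=max}, and it should follow because $G(z,u_t^j(z))$ evaluated at the clustered saddles near $\lambda_j$ is of order $\log\sqrt t$ smaller (the $\log|u-\lambda_j|$ term blows down to $-\infty$ like $-\tfrac12\log t$) than at the semicircle saddles, where it is of order $+\tfrac12\log t$. Hence for $t$ large enough the maximum in \eqref{eq:U=max} is never attained at a clustered saddle, and the structure of $U_t$ on $K$ is governed entirely by the two-branch semicircle picture, where $\max_\pm G(z,u_t^\pm(z))$ is a genuine maximum of two smooth functions.

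The third step is to identify $\supp(\mu_t)\cap K = \mathcal D_t^c\cap K$ (by Theorem \ref{thm:description}(3), intersected with $K$): on $K$ this is the set where $G(z,u_t^+(z))=G(z,u_t^-(z))$, i.e. the ``Stokes line'' where the two semicircle branches exchange dominance. For the unperturbed equation $v+1/v=w$ this locus is exactly the segment $[-2,2]$ (after undoing the scaling, the horizontal segment $[-2\sqrt t,2\sqrt t]$ through the origin). A standard implicit-function / perturbation argument then shows that for large $t$ this level set is a single smooth arc, $O(1/\sqrt t)$-close to the horizontal segment in the rescaled picture, hence converging in Hausdorff metric (in the original, unrescaled coordinates, inside the fixed disk $K$) to the horizontal line through the center of mass. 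One also has to check that no branch point of $B$ intrudes into the interior of $K$ for large $t$ except along this arc; the branch points are where $w^2-4+O(1/\sqrt t)=0$, i.e. near $w=\pm2$, which after rescaling escape to $\pm2\sqrt t$ and leave $K$.

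The main obstacle I expect is the second step: rigorously ruling out that a clustered saddle $u_t^j(z)$ near some $\lambda_j$ could ever win the competition for the maximum, uniformly over $z\in K$, and more delicately, controlling the branch choice $u_t^*$ — which by the remarks in the excerpt is defined by topological/monodromy considerations — well enough to equate it with the naive $\arg\max$ in \eqref{eq:U=max}. Near the rescaled origin the two semicircle saddles nearly collide with the cluster of spurious saddles, so the perturbative expansions must be uniform down to $z$ of order $1/\sqrt t$ in the rescaled variable; handling this overlap region (likely by a separate local analysis, or by using the subharmonicity of $U_t$ together with the already-established small-$t$/local semicircle behaviour of Theorem \ref{thm:small_t} as a boundary input) is where the real work lies. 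Once \eqref{eq:U=max} is in hand, the Hausdorff convergence of the support is a comparatively routine consequence of continuity of $G$ in all variables and the perturbation estimates on the $u_t^\pm$.
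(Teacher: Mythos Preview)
Your overall strategy --- perturbative localization of the $d+1$ saddles, height comparison, identification of the support as the locus where the two ``semicircle'' saddles exchange dominance --- matches the paper. But there is one genuine error in the mechanism and one genuine gap in the argument.

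\textbf{The error.} The $d-1$ bounded saddles do \emph{not} cluster near the $\lambda_j$. As $t\to\infty$ the saddle equation~\eqref{eq:sp_eqn_2}, rewritten as $\tfrac{u-z}{t}\prod_j(u-\lambda_j)+\tfrac1\alpha\sum_j\alpha_j\prod_{k\neq j}(u-\lambda_k)=0$, degenerates to $\sum_j\alpha_j\prod_{k\neq j}(u-\lambda_k)=0$; its $d-1$ roots $\lambda_1',\dots,\lambda_{d-1}'$ are the zeros of $P'/P$-type, generically distinct from the $\lambda_j$. Consequently $\log|u_t^k(z)-\lambda_j|$ stays \emph{bounded} at these saddles, and $G(z,u_t^k(z))$ is $O(1)$, not $-\tfrac12\log t$. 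The semicircle saddles still win because their height is $+\tfrac12\log t$, so your conclusion survives, but the reasoning you wrote would not.

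\textbf{The gap.} You correctly flag that equating the topologically defined $u_t^*$ with the naive $\arg\max$ is the crux, but your suggested remedies (subharmonicity, importing Theorem~\ref{thm:small_t}) do not address it. The paper resolves this by a direct \emph{barrier} argument: for $z$ above the nodal curve one checks that $x\mapsto G(z,u_t^{d+1}(z)+x)$ is convex on all of $\R$ (a computation from $\partial_u^2 g = \tfrac1t - \tfrac1\alpha\sum_j\alpha_j(u-\lambda_j)^{-2}$ using $\Im u_t^{d+1}(z)\sim\sqrt t$), so the horizontal line through $u_t^{d+1}(z)$ has $G$-values everywhere $\ge G(z,u_t^{d+1}(z))>G(z,u_t^d(z))$. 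This line separates $+i\infty$ from $-i\infty$ in the sublevel set $\Sigma(h)$ for every $h<G(z,u_t^{d+1}(z))$, which by Lemma~\ref{lem:saddle}(3) forces $u_t^{d+1}(z)$ to be maximally relevant. An analogous (earlier) circular barrier at $|u|\asymp t^{1/3}$ rules out the bounded saddles as candidates for $u_t^*$. These barrier constructions are the missing idea; without them the height comparison alone is insufficient, since $u_t^*$ need not be the highest saddle.

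Finally, your worry about an ``overlap region'' is misplaced: for $z\in K$ bounded, the semicircle saddles sit near $\pm i\sqrt t$ while the bounded saddles sit near the $\lambda_j'$, so they are uniformly $\asymp\sqrt t$ apart and the perturbation estimates are comfortable on all of $K$.
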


It was proven in \cite{heatflowrandompoly} that for a different model of random polynomials with independent coefficients, the limiting distribution after the heat flow will always collapse to the semicircle distribution on the real line, after some finite threshold $t_{\mathrm{sing}}>0$. In our case, we do not expect such a behavior, since it follows from Theorem \ref{thm:large_t} that the curves in $\mathcal{D}_t^c$ supporting mass of $\mu_t$ won't even converge to the real line. Properly rescaled however, we identify a semicircle distribution in the ``macroscopic/global'' limit.

\begin{theorem}\label{thm:semicircle}
As $t\to\infty$, the rescaled distribution $\mu_t(\sqrt t\cdot )$ converges weakly to the standard semicircle distribution $\mathsf{sc}_1$.
\end{theorem}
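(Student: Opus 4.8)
The plan is to rescale by $\sqrt t$ and pass to the limit in the characterisations of $\mu_t$ from Theorem~\ref{thm:description}. Write $\nu_t:=\mu_t(\sqrt t\,\cdot)$ for the push-forward of $\mu_t$ under $z\mapsto z/\sqrt t$, so that $\nu_t\Rightarrow\mathsf{sc}_1$ is exactly what we must prove. A change of variables identifies the Stieltjes transform of $\nu_t$ as $M_t(w)=\sqrt t\,m_t(\sqrt t\,w)$ and its logarithmic potential as $U_{\nu_t}(w)=U_t(\sqrt t\,w)-\tfrac12\log t$. Setting $v_t^*(w):=u_t^*(\sqrt t\,w)/\sqrt t$ and substituting $z=\sqrt t\,w$, $u=\sqrt t\,v$ into~\eqref{eq:sp_eqn_1},~\eqref{eq:mtut},~\eqref{eq:selfcons_eqn} and~\eqref{eq:U_t=G}, one obtains that $v_t^*(w)$ is the dominating root of the rescaled saddle point equation
\begin{align*}
v+\frac1\alpha\sum_{j=1}^d\frac{\alpha_j}{v-\lambda_j/\sqrt t}=w,
\end{align*}
that $M_t(w)=w-v_t^*(w)$ and satisfies $M_t(w)=\tfrac1\alpha\sum_{j=1}^d\alpha_j\big(w-M_t(w)-\lambda_j/\sqrt t\big)^{-1}$, and that $U_{\nu_t}(w)=\tfrac1\alpha\sum_{j=1}^d\alpha_j\log|v_t^*(w)-\lambda_j/\sqrt t|+\tfrac12\Re\big((w-v_t^*(w))^2\big)$ on the corresponding domain.

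Since $\lambda_j/\sqrt t\to0$ for each $j$ and $\sum_{j=1}^d\alpha_j=\alpha$, as $t\to\infty$ the rescaled saddle point equation degenerates to $v+\tfrac1v=w$, with dominating (``outer'') root $v_\infty^*(w)=\tfrac12\big(w+\sqrt{w^2-4}\big)$; correspondingly the relations for $M_t$ and $U_{\nu_t}$ degenerate into $M_{\mathsf{sc}_1}(w)=w-v_\infty^*(w)=\tfrac12\big(w-\sqrt{w^2-4}\big)$, the self-consistent equation $M_{\mathsf{sc}_1}(w)=1/\big(w-M_{\mathsf{sc}_1}(w)\big)$, and $U_{\mathsf{sc}_1}(w)=\log|v_\infty^*(w)|+\tfrac12\Re\big((w-v_\infty^*(w))^2\big)$ — precisely the objects attached to the standard semicircle law, cf.\ Remark~\ref{rem:example_hermite} with $a=0$ and $d=\alpha_1=1$. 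The crux is to justify convergence of the \emph{dominating} branch, $v_t^*(w)\to v_\infty^*(w)$: for $|w|$ large this is immediate (the outer branch is $\approx w$), but for general $w$ — outside the rescaled conflict set $t^{-1/2}\mathcal D_t^c$, which should shrink to $[-2,2]$ — one has to keep track of the topological selection of the dominating saddle underlying Lemma~\ref{lem:saddle} and Theorem~\ref{thm:description}. Granting it, $M_t(w)\to M_{\mathsf{sc}_1}(w)$ locally uniformly on $\C\setminus[-2,2]$.

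From here the conclusion is standard. On the one hand, analytic continuation and $M_t(w)=\sqrt t\,m_t(\sqrt t\,w)$ imply that any subsequential vague limit $\nu$ of $(\nu_t)$ has Stieltjes transform equal to $M_{\mathsf{sc}_1}$ on $\C\setminus[-2,2]$; letting $w\to\infty$ gives $\nu(\C)=\lim_{|w|\to\infty}wM_{\mathsf{sc}_1}(w)=1$, so no mass escapes and $(\nu_t)$ is tight. On the other hand $\supp\nu\subseteq[-2,2]\subseteq\R$, and a probability measure on the real line is determined by its Stieltjes transform, whence $\nu=\mathsf{sc}_1$. Therefore $\nu_t\Rightarrow\mathsf{sc}_1$.

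The degeneration of the self-consistent equation itself is routine given Theorem~\ref{thm:description}. The main obstacle is the branch-tracking step $v_t^*(w)\to v_\infty^*(w)$ for all $w\notin[-2,2]$, i.e.\ understanding $\mathcal D_t$ and the dominating saddle not merely near infinity but uniformly on the whole scale-$\sqrt t$ picture; this should come from a large-$t$ refinement of the saddle-point analysis of Section~\ref{sec:Saddlepointequation}, in the spirit of Theorem~\ref{thm:large_t} but carried out uniformly out to distance $\sim\sqrt t$ from the center of mass. (An equivalent route is to show that $\supp\mu_t$ stays within $o(\sqrt t)$ of the horizontal line through $\tfrac1\alpha\sum_{j=1}^d\alpha_j\lambda_j$, so that $\supp\nu\subseteq\R$ and one may conclude from the holomorphic moments alone — but this is the same global large-$t$ analysis.)
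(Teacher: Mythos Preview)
Your rescaling framework is exactly right, and the degeneration of the saddle point equation and the self-consistent equation to their semicircle counterparts is indeed routine. However, you have not written a proof: you yourself flag the branch-tracking step $v_t^*(w)\to v_\infty^*(w)$ as ``the crux'' and then explicitly \emph{grant} it, saying it ``should come from a large-$t$ refinement of the saddle-point analysis \dots\ in the spirit of Theorem~\ref{thm:large_t}''. That step is the entire substance of the theorem; everything else is bookkeeping. So as it stands there is a genuine gap, and it is precisely the one you name.

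The paper fills this gap as follows. First, Lemma~\ref{lem:large_t} is already the ``uniform out to distance $\sim\sqrt t$'' statement you ask for: it shows that for $|z|<C\sqrt t$ the maximally relevant saddle is close to one of $u_t^\pm(z)=\tfrac12(z\pm\sqrt{z^2-4t})$, ruling out the $d-1$ bounded saddles. Second, to decide \emph{which} of $u_t^\pm$ is dominant, the paper computes the difference $G(\sqrt t\tilde z,u_t^+)-G(\sqrt t\tilde z,u_t^-)\to H(\tilde z)$ with $\partial_{\tilde y}H>0$ and $H|_{(-2,2)}=0$, so the nodal curve in the $\tilde z$-scale collapses to $(-2,2)\subset\R$. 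Third, and this is the part with real content, one checks (at a single point such as $\tilde z=i$) that $u_t^{d+1}$ is \emph{not irrelevant} by building a horizontal convexity barrier for $x\mapsto G(\sqrt t\tilde z,u_t^{d+1}+x)$, exactly as in~\eqref{eq:convexity2}; Lemma~\ref{lem:sp_nbh} then propagates this to all of $\C_+$. This is the topological selection you allude to, and it does not come for free from the algebra of the degenerating equation.

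Two smaller remarks. Tightness of $(\nu_t)$ need not be extracted from the Stieltjes transform: Lemma~\ref{lem:P_t^n roots} gives $\supp\mu_t\subset\bigcup_j \overline{B_{2\sqrt t}(\lambda_j)}$, hence $\supp\nu_t\subset\bigcup_j \overline{B_2(\lambda_j/\sqrt t)}$, which is eventually inside a fixed ball. And the paper concludes via logarithmic potentials (using Lemma~\ref{lem:upperbound} for a uniform upper bound and the $L^1_{\mathrm{loc}}$ argument from the proof of Theorem~\ref{thm:measure}) rather than via Stieltjes transforms; your Stieltjes route would also work once the branch is identified, but note that your claim $\supp\nu\subseteq[-2,2]$ already presupposes the branch selection, so it does not provide an independent shortcut.
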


Since $\mu_t,m_t,$ and $U_t$ are dynamic objects in the time parameter $t>0$, it is natural to ask for an evolution equation, which describes the hydrodynamic limit of the particles given by the zeros of $P_t^n$. The following theorem should be considered as a counterpart to Theorem 5.2 in \cite{heatflowrandompoly}.

\begin{theorem}
\label{thm:PDEs}
Let $\mathcal{D}\coloneq \{(z,t)\in\C^2: t=|t|e^{i\theta}\neq 0, e^{-i\theta/2} z \in \mathcal{D}_{|t|}\}$.
\begin{enumerate}
\item The logarithmic potential $U_t$ satisfies the Hamilton-Jacobi PDE
\begin{align}\label{eq:HJ-PDE}
\partial_{t}U_t(z)=-(\partial_z U_t(z))^2, \quad (z,t)\in\mathcal{D},
\end{align}
where $\partial_t$ is the Wirtinger derivative of $t\in\C$.
\item The Stieltjes transform $m_t$ satisfies the PDE's
\begin{align}\label{eq:Burgers-PDE}
\partial_t m_t(z)&=-\frac{1}{2}\partial_z(m_t^2(z))=-m_t(z)\partial_z m_t(z), \quad (z,t)\in\mathcal{D},\\
\partial_{\bar t}m_t(z)&=-\frac{1}{2}\partial_z\overline{(m_t^2(z))}=-\overline{m_t(z)}\partial_z \overline{m_t(z)}, \quad (z,t)\in\mathcal{D} .
\end{align}
\end{enumerate}
\end{theorem}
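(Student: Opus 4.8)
The plan is to derive both PDE's from the explicit descriptions $U_t(z) = G(z, u_t^*(z))$ and $m_t(z) = \frac{1}{t}(z - u_t^*(z))$ established in Theorem~\ref{thm:description}, combined with Theorem~\ref{thm:measure}, which reduces the complex-$t$ case to the positive-$t$ case via rotation. First I would fix $t>0$ real and work on $\mathcal{D}_t$, where $u_t^*(z)$ is a well-defined holomorphic branch satisfying the saddle point equation $u + \frac{t}{\alpha}\sum_j \frac{\alpha_j}{u-\lambda_j} = z$; write this as $u + t\, h(u) = z$ with $h(u) := \frac{1}{\alpha}\sum_j \frac{\alpha_j}{u - \lambda_j}$, so that $m_t(z) = h(u_t^*(z))$ by \eqref{eq:selfcons_eqn}. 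Implicit differentiation of the saddle point relation in $z$ and in $t$ gives $\partial_z u_t^* = \frac{1}{1 + t h'(u_t^*)}$ and $\partial_t u_t^* = \frac{-h(u_t^*)}{1 + t h'(u_t^*)} = -m_t(z)\,\partial_z u_t^*$; these are the basic identities everything rests on.

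For the Burgers' equation, differentiate $m_t(z) = \frac{1}{t}(z - u_t^*(z))$: in $z$ we get $\partial_z m_t = \frac{1}{t}(1 - \partial_z u_t^*)$, and in $t$ we get $\partial_t m_t = -\frac{1}{t^2}(z - u_t^*) - \frac{1}{t}\partial_t u_t^* = -\frac{1}{t}m_t - \frac{1}{t}\partial_t u_t^*$. Substituting $\partial_t u_t^* = -m_t \partial_z u_t^*$ and then eliminating $\partial_z u_t^*$ via $\partial_z u_t^* = 1 - t\,\partial_z m_t$ yields $\partial_t m_t = -\frac{1}{t}m_t + \frac{1}{t}m_t(1 - t\partial_z m_t) = -m_t \partial_z m_t$, which is \eqref{eq:Burgers-PDE}. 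For the Hamilton-Jacobi PDE I would compute $\partial_t U_t$ and $\partial_z U_t$ from $U_t(z) = G(z, u_t^*(z))$ using the chain rule, being careful that $G$ is the real part of a multivalued holomorphic $g(z,u)$ so $U_t = \Re g(z, u_t^*(z))$ and Wirtinger calculus applies cleanly to the holomorphic piece. The crucial simplification is that $\partial_u g(z, u_t^*(z)) = 0$ at the saddle point, so the chain-rule terms involving $\partial_z u_t^*$ and $\partial_t u_t^*$ drop out entirely: $\partial_z U_t = \Re(\partial_z g) = \Re(\tfrac{u_t^* - z}{t})\cdot(-1)$-type expression, and in fact $2\partial_z U_t = m_t$ recovers \eqref{eq:mtut}, while $\partial_t g = -\Re\frac{(z-u)^2}{2t^2}$-type term gives $\partial_t U_t = -\frac{1}{4}m_t^2$ after identifying $\frac{z - u_t^*}{t} = m_t$; then $(\partial_z U_t)^2 = \frac{1}{4}m_t^2$ delivers \eqref{eq:HJ-PDE}. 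One must track the Wirtinger derivative $\partial_t$ acting on the explicit $1/t$ and $1/(2t)$ prefactors in $g$ correctly, and note that for real $t$ the holomorphic dependence makes $\partial_{\bar t}$ of the holomorphic part vanish, which handles the conjugate equation together with the rotation covariance.

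The extension to complex $t = |t|e^{i\theta}$ proceeds by writing $\mu_t$ as the push-forward of $\tilde\mu_{|t|}$ under $z \mapsto ze^{i\theta/2}$ as in Theorem~\ref{thm:measure}; one transfers $U_t(z) = \tilde U_{|t|}(e^{-i\theta/2}z)$ and $m_t(z) = e^{-i\theta/2}\tilde m_{|t|}(e^{-i\theta/2}z)$, applies the real-$t$ PDE's to $\tilde U$ and $\tilde m$, and uses the chain rule for the Wirtinger derivatives $\partial_t, \partial_{\bar t}$ acting through the substitutions $|t| = \sqrt{t\bar t}$ and $\theta = \frac{1}{2i}\log(t/\bar t)$. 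The conjugate equation $\partial_{\bar t}m_t = -\overline{m_t}\,\partial_z\overline{m_t}$ then emerges naturally because $\overline{m_t}$ picks up the antiholomorphic part of the rotation. The main obstacle I anticipate is bookkeeping rather than conceptual: keeping the multivalued/branch structure of $g$ under control so that the chain rule is applied on a genuine single-valued holomorphic branch on $\mathcal{D}_t$ (where by definition the saddle values are distinct, so $u_t^*$ is locally holomorphic and no branch point is crossed), and correctly combining the Wirtinger calculus in $z$ with that in the complex parameter $t$ when both the explicit $t$-prefactors in $g$ and the implicit $t$-dependence of $u_t^*$ contribute — the saddle point identity $\partial_u g = 0$ is what makes the implicit part disappear, and verifying this cancellation carefully is the heart of the argument.
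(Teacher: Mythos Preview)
Your proposal is correct and shares the central insight with the paper: the saddle point condition $\partial_u g(z,u_t^*(z))=0$ kills the implicit-derivative terms, so both $\partial_z U_t$ and $\partial_t U_t$ reduce to the explicit partials $(\partial_z g)$ and $(\partial_t g)$ evaluated at $u_t^*(z)$. From there $\partial_z U_t = \tfrac{1}{2}m_t$ and $\partial_t U_t = -\tfrac{1}{4}m_t^2$ give the Hamilton--Jacobi equation immediately.

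The differences to the paper are organizational rather than mathematical. First, the paper works with complex $t$ from the outset, relying on Lemma~\ref{lem:analyt} (analyticity of $u_t^*$ in $(z,t)$) and the complex-$t$ extension in the proof of Theorem~\ref{thm:measure}; this avoids your two-step ``real $t$ then rotate'' scheme and the attendant bookkeeping with $|t|=(t\bar t)^{1/2}$ and $\theta=\tfrac{1}{2i}\log(t/\bar t)$. Second, the paper derives Burgers' \emph{from} Hamilton--Jacobi in one line via $m_t = 2\partial_z U_t$ and the commutation $\partial_t\partial_z = \partial_z\partial_t$, whereas you obtain Burgers' independently by implicitly differentiating the saddle-point relation $u+th(u)=z$; your computation is clean and correct, just longer. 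Third, for the conjugate equation the paper simply uses that $U_t$ is real-valued, so $\partial_{\bar t}U_t=\overline{\partial_t U_t}$, and then $\partial_{\bar t}m_t = 2\partial_z\partial_{\bar t}U_t = 2\partial_z\overline{\partial_t U_t} = -\tfrac{1}{2}\partial_z\overline{m_t^2}$ falls out immediately; this is considerably shorter than threading the $\bar t$-dependence through the rotation covariance. Your route works, but the paper's ordering (complex $t$ throughout; HJ first, Burgers' second; realness of $U_t$ for the conjugate equation) yields a proof of a few lines.
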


\begin{remark}
Part \emph{(2)} states that the Stieltjes transform of the \mbox{measure} $\mu_t$ fulfills a real and complex version of the (generalized) inviscid Burgers' equation. The inviscid Burgers' equation is known to develop shock waves, which are discontinuities that are precisely described by the curves within $\supp(\mu_t)\subseteq\mathcal D_t^c$. Note however, that the real part of \eqref{eq:HJ-PDE} seen as a PDE in real coordinates $(\Re(z),\Im(z))$ corresponds to a non-convex Hamiltonian due to the negative imaginary part in the  definition of the Wirtinger derivative $\partial_z$. Hence, standard results in the theory of Hamilton-Jacobi equations do not apply in our setting (like Hopf-Lax solutions, see \cite[Remark 4.4]{heatflowrandompoly}).
\end{remark}

\begin{remark}
	The emergence of rather horizontal lines of zeros under the heat flow can heuristically be understood from the point of view of particle systems as follows.
	Consider a heat-evolved polynomial $\eee^{-\frac t{2n} \partial_z^2} P_n(z)$ of degree $n$ and suppose that the roots of the polynomial $P_n$ are distinct (which is clearly not the case for our $P^n$, but which is true for $P^n_\varepsilon$ for any small $\varepsilon>0$, see Remark \ref{rem:distinct_zeros} below).
	For sufficiently small $t$, if we denote the zeros as locally analytic functions $z_1(t), \ldots, z_n(t)$ in $t$, it is well known (see e.g.~\cite{csordas_smith_varga,tao_blog1,tao_blog2,hallho,heatflowrandompoly}) that they satisfy the system of ODE's
	\begin{align}\label{eq:ODE_for_poly}
		\partial_t z_j(t) =  \frac 1 n\sum_{ k\neq j} \frac{1}{z_j(t) - z_k (t)},
		\qquad
		j=1,\ldots, n.
	\end{align}
	In particular, looking at the distance of two zeros, say $\Delta(t)=z_1(t)-z_2(t)$, it follows that
	\begin{align*}
		\partial_t  \Delta(t)=& \frac{2\overline{\Delta(t)}}{n|\Delta(t)|^2}-\frac 1 n\sum_{k=3}^n\frac{\Delta(t)}{(z_1(t)-z_k(t))(z_2(t)-z_k(t))}.
	\end{align*}
	Suppose now that $z_1(t)$ and $z_2(t)$ are much closer than they are to all other $z_k(t)$, then the first term dominates. Hence, $\partial _t\Re(\Delta(t))$ has the same sign as $\Re(\Delta(t))$, thus $\Re(z_1(t))$ and $\Re(z_2(t))$ repel each other. On the contrary,  $\partial_t\Im(\Delta(t))$ has opposite sign of $\Im(\Delta(t))$, implying attractive behavior between $\Im(z_1(t))$ and $\Im(z_2(t))$. This explains the tendency of $z_1(t)$ and $z_2(t)$ to locally align horizontally. However, the second term of the majority of roots $z_k(t)$ is often non-negligible and adds a global flow into the direction of the initial Stieltjes transform as explained in \cite{heatflowrandompoly}.
\end{remark}

\section{Existence of a limiting distribution}
In this section we will prove Theorem \ref{thm:measure}.

\subsection{Saddle point method}
We begin with a short reminder of the saddle point method and recall the necessary statement for later use as stated in \cite[Corollary 1.4]{O_Sullivan_2019}.

We make the following assumptions.
\begin{enumerate}[(i)]
\item Let $g:\C\to\C$ be holomorphic in a neighborhood $O$ of a simple curve $\gamma\subset \C$.
\item Let $u^*$ be a saddle point of $g$, i.e.  $\partial_u g(u)|_{u=u^*}=0$.
\item The chosen contour $\gamma$ passes through the saddle point $u^*$ of $g$ and $u^*$ is an inner point of $\gamma$.
\item $u^*$ uniquely maximizes $\Re\,g$ on $\gamma$, i.e.~$\Re\,g(u)<\Re\,g(u^*)$ for all $u \in\gamma$ such that $u \neq u^*$.
\end{enumerate}

\begin{theorem}[Saddle point method]

\label{thm:sp_method}
Under the assumptions above, we have
\begin{align*}
\lim_{N\to\infty}\frac 1 N\log\left\lvert\int_{\gamma}\,e^{Ng(u)}\,du\right\rvert=\Re g(u^*).
\end{align*}
\end{theorem}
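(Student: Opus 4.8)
The plan is to prove the two matching bounds separately: an upper bound $\limsup_N \frac1N \log|\int_\gamma e^{Ng(u)}\,du| \le \Re g(u^*)$ and a lower bound $\liminf_N \frac1N \log|\int_\gamma e^{Ng(u)}\,du| \ge \Re g(u^*)$. The upper bound is the easy direction: by the triangle inequality for integrals, $|\int_\gamma e^{Ng(u)}\,du| \le \int_\gamma |e^{Ng(u)}|\,|du| = \int_\gamma e^{N\Re g(u)}\,|du| \le \ell(\gamma)\, e^{N\max_{\gamma}\Re g} = \ell(\gamma)\, e^{N\Re g(u^*)}$, using assumption (iv) that $u^*$ maximizes $\Re g$ on the compact curve $\gamma$. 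Taking logarithms, dividing by $N$, and letting $N\to\infty$ kills the $\frac1N\log\ell(\gamma)$ term and gives the upper bound.

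For the lower bound I would invoke the classical saddle point asymptotics, i.e.\ the full statement of \cite[Corollary 1.4]{O_Sullivan_2019} which the paper cites: under assumptions (i)--(iv), together with the nondegeneracy $g''(u^*)\neq 0$ that is implicit in calling $u^*$ a (simple) saddle point, one has the sharp asymptotic $\int_\gamma e^{Ng(u)}\,du \sim e^{Ng(u^*)}\sqrt{\frac{2\pi}{N\,|g''(u^*)|}}\, e^{i\phi}$ for a suitable phase $\phi$ determined by the direction of steepest descent. Taking absolute values, $|\int_\gamma e^{Ng(u)}\,du| \sim e^{N\Re g(u^*)} \cdot (\text{polynomially small in }N)$, so $\frac1N \log|\cdots| = \Re g(u^*) + O\!\left(\frac{\log N}{N}\right) \to \Re g(u^*)$. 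Combining with the upper bound yields the claimed limit. Since the theorem as stated only asks for the exponential order (not the subexponential prefactor), one could alternatively give a self-contained lower bound by localizing: restrict to a small sub-arc $\gamma_\delta$ of $\gamma$ around $u^*$ on which, after parametrizing by arc length $s$ and writing $g(u(s)) = g(u^*) + \frac12 g''(u^*)(u(s)-u^*)^2 + O(|s|^3)$, one can choose $\gamma$ (or use that it is already a steepest-descent contour near $u^*$) so that $\Re g(u(s)) \ge \Re g(u^*) - Cs^2$; then $|\int_{\gamma_\delta} e^{Ng}\,du|$ is bounded below, after a further argument controlling cancellation on the shrinking arc $|s|\le N^{-1/2}$ where the imaginary part of $Ng$ varies by $O(1)$, by $c\,N^{-1/2} e^{N\Re g(u^*)}$. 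The contribution of $\gamma \setminus \gamma_\delta$ is exponentially smaller by strict inequality in (iv), hence does not spoil the bound.

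The main obstacle is the lower bound, specifically ensuring there is no catastrophic cancellation in the oscillatory integral near $u^*$: a priori the real part $\Re g$ could be maximized at $u^*$ while the integrand rotates rapidly and the integral is much smaller than the pointwise maximum of $|e^{Ng}|$ suggests. The standard resolution is that near a nondegenerate saddle the steepest-descent direction makes $g(u)-g(u^*)$ real and negative to leading order along $\gamma$ (this is exactly what "$\gamma$ passes through the saddle point and $u^*$ maximizes $\Re g$ on $\gamma$" forces, at least locally, for a nondegenerate saddle), so on the critical window $|u-u^*| \lesssim N^{-1/2}$ the phase $\Im(Ng)$ is $O(1)$ and the integrand does not oscillate enough to produce cancellation. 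Making this precise is routine once one quotes the nondegeneracy of the saddle; since the paper is content to cite \cite{O_Sullivan_2019} for the stronger asymptotic statement, the cleanest route in the write-up is simply: prove the trivial upper bound by hand as above, and obtain the lower bound as an immediate consequence of the cited corollary, noting that we only need the logarithmic asymptotics and so the precise prefactor is irrelevant.
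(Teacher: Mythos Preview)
The paper does not prove this theorem; it is simply recalled as \cite[Corollary 1.4]{O_Sullivan_2019} and used as a black box. Your proposal therefore goes beyond what the paper does: you supply the elementary upper bound by the triangle inequality (which is correct, provided $\gamma$ has finite length --- implicit in the cited reference and in the paper's application, where the theorem is only applied to the bounded sub-arc $\gamma_C$), and for the lower bound you end up invoking the very same citation. In that sense your argument and the paper's treatment coincide.

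Two remarks on your self-contained lower bound sketch. First, assumptions (i)--(iv) do \emph{not} force $\gamma$ to be a steepest descent path near $u^*$: one can have $u^*$ a strict maximum of $\Re g$ on $\gamma$ while $\Im g$ still varies to second order along $\gamma$, so the ``phase is $O(1)$ on the window $|s|\lesssim N^{-1/2}$'' step does not follow directly from the stated hypotheses. The standard fix is to use holomorphy (assumption (i)) and Cauchy's theorem to locally replace $\gamma$ near $u^*$ by the genuine steepest descent arc; this is routine but should be stated if you want the argument to be self-contained. Second, nondegeneracy $g''(u^*)\neq 0$ is not part of the hypotheses as written (the paper only requires $g'(u^*)=0$); in the paper's application nondegeneracy holds because $z\in\mathcal D_t$ excludes the branch locus, but your write-up should make that assumption explicit rather than calling it ``implicit in the word saddle''.
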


In order to apply the method, we need an integral representation of the heat-evolved polynomials $P_t^n(z)$. Recall that the  heat flow on the real line can be expressed as a convolution of the initial datum with a Gaussian kernel. The following is an analogous representation  for the holomorphic   heat flow we are interested in.

\begin{lemma}
\label{lem:int}
Let $t >0$. Then,
\begin{align}\label{eq:int}
P_t^n(z)=e^{-\frac{t}{2\alpha n}\partial_z^2}P^n(z)=\frac{-i}{\sqrt{2\pi \frac{t}{\alpha n}}}\int_{i\mathbb{R}}\, \exp \left(\frac{(z-u)^2}{2\frac{t}{\alpha n}}\right)P^n(u)\,du \quad \forall z\in\C.
\end{align}
\end{lemma}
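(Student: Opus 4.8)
The plan is to establish the integral representation \eqref{eq:int} by verifying that both sides agree as polynomials in $z$, using the classical Gaussian integral as the engine. First I would recall the one-dimensional Weierstrass-transform identity: for any $\sigma^2>0$ and any $w\in\C$,
\begin{align*}
\frac{1}{\sqrt{2\pi\sigma^2}}\int_{\R}\exp\!\left(-\frac{(x-w)^2}{2\sigma^2}\right)x^m\,dx
= \exp\!\left(\frac{\sigma^2}{2}\partial_w^2\right)w^m
= \sum_{k\ge0}\frac{(\sigma^2/2)^k}{k!}\,\partial_w^{2k}w^m,
\end{align*}
which is checked either by completing the square and expanding, or by noting both sides solve the heat equation $\partial_\sigma^2$-wise with the same initial data at $\sigma=0$. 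Here, however, the "variance" is $\sigma^2 = -t/(\alpha n)$, i.e.\ negative, which is exactly why the contour is rotated from $\R$ to $i\R$: substituting $u = iy$, $x \rightsquigarrow y$ turns $\exp\big(\tfrac{(z-u)^2}{2t/(\alpha n)}\big)$ into a genuine decaying Gaussian in $y$, and the Jacobian $du = i\,dy$ together with the prefactor $\tfrac{-i}{\sqrt{2\pi t/(\alpha n)}}$ produces the correct normalization $\tfrac{1}{\sqrt{2\pi (t/(\alpha n))}}$ of a real Gaussian of variance $t/(\alpha n)$.

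The key steps, in order, are: (1) reduce to monomials by linearity --- since $P^n(u)$ is a polynomial in $u$ and both the operator $e^{-\frac{t}{2\alpha n}\partial_z^2}$ and the contour integral are linear in $P^n$, it suffices to prove \eqref{eq:int} for $P^n(u) = u^m$, $m\in\N_0$; (2) on the right-hand side, substitute $u = iy$ with $y\in\R$ to get
\begin{align*}
\frac{-i}{\sqrt{2\pi t/(\alpha n)}}\int_{i\R}\exp\!\left(\frac{(z-u)^2}{2t/(\alpha n)}\right)u^m\,du
= \frac{1}{\sqrt{2\pi t/(\alpha n)}}\int_{\R}\exp\!\left(-\frac{(y+iz)^2}{2t/(\alpha n)}\right)(iy)^m\,dy;
\end{align*}
(3) recognize this as the Weierstrass transform of $y\mapsto (iy)^m$ evaluated via the real-variance identity above with $\sigma^2 = t/(\alpha n)$ and the shift $w = -iz$ (or directly: complete the square, shift the contour back using that the integrand is entire and decays, and expand), obtaining $\sum_k \tfrac{(t/(2\alpha n))^k}{k!}\partial_w^{2k}w^m\big|_{w=-iz}\cdot(\text{sign bookkeeping})$; (4) convert $\partial_w$ derivatives at $w=-iz$ back to $\partial_z$ derivatives, picking up the factor $(-i)^{2k} = (-1)^k$ which is precisely what turns the $+$ sign into the alternating $(-t/(2\alpha n))^k$ appearing in \eqref{eq:def_heat_flow}, so that the right-hand side equals $\sum_k \tfrac{(-t/(2\alpha n))^k}{k!}\partial_z^{2k} z^m = e^{-\frac{t}{2\alpha n}\partial_z^2}z^m$.

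The main obstacle --- really the only subtlety beyond routine algebra --- is the justification of the contour manipulations and the convergence of the (absolutely convergent, since $y\mapsto (iy)^m$ times a Gaussian is integrable) improper integral: one must check that shifting the line $\R$ to $\R - \Im(\cdots)$ or completing the square inside a Gaussian of a complex variable is legitimate, which follows from Cauchy's theorem applied to the entire integrand on a rectangular contour together with the Gaussian decay $|\exp(-(y+iz)^2/(2t/(\alpha n)))| = \exp(-(y^2 - (\Im z)^2 \cdot\text{const})/(2t/(\alpha n)))\to 0$ as $|y|\to\infty$, so the two vertical sides of the rectangle vanish in the limit. A minor point to state carefully is the branch/sign of the square root $\sqrt{2\pi t/(\alpha n)}$ and the matching factor $-i$, which is fixed by demanding that for $m=0$ the identity reduces to the Gaussian total-mass normalization $\tfrac{1}{\sqrt{2\pi\sigma^2}}\int_\R e^{-y^2/2\sigma^2}dy = 1$. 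Once the $m=0$ and general-$m$ monomial cases are in hand, linearity closes the argument.
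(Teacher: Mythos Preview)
Your proposal is correct. The paper itself does not give a proof of this lemma but simply cites \cite[Theorem 2.3]{GAF-paper}; your direct argument---linearity reduction to monomials, rotation $u=iy$ to obtain a genuine Gaussian, the Weierstrass-transform identity, and the sign bookkeeping $(-i)^{2k}=(-1)^k$ that converts the forward heat kernel into the backward operator $e^{-\frac{t}{2\alpha n}\partial_z^2}$---is the standard self-contained proof and is exactly what one would expect to find behind that citation. The contour-shift justification via Cauchy's theorem on rectangles with Gaussian decay on the vertical sides is correctly identified and routine.
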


\begin{proof}
See \cite[Theorem 2.3]{GAF-paper}. 
\end{proof}

We rewrite the  integral appearing in Lemma \ref{lem:int} as
\begin{align*}
\int_{i\mathbb{R}}\, e^{\frac{(z-u)^2}{2\frac{t}{\alpha n}}}P^n(u)\,du=\int_{i\mathbb{R}}\, e^{\alpha n\,\bigl(\frac{1}{\alpha}\sum_{j=1}^d\alpha_j\log(u-\lambda_j)+\frac{(z-u)^2}{2t}\bigr)}\,du=\int_{i{\R}}\,e^{\alpha n\,g(z,u)}\,du,
\end{align*}
where the function $g$ is given by
\begin{align*}
g(z,u)&=  \frac{1}{\alpha}\sum_{j=1}^d\alpha_j\log(u-\lambda_j)+\frac{(z-u)^2}{2t},\\
\intertext{and its real part will be denoted by}
G(z,u)&= \Re\,g(z,u)=\frac{1}{\alpha}\sum_{j=1}^d\alpha_j\log |u-\lambda_j|+\Re\left(\frac{(z-u)^2}{2t}\right),
\end{align*}
as introduced in Section \ref{sec:main_results}.
Note that, for fixed $z$, the function $u\mapsto g(z,u)$ is multi-valued analytic on $\C\setminus \{\lambda_1,...,\lambda_d\}$, whereas $u\mapsto G(z,u)$ is well defined (single-valued) and harmonic on $\C\setminus \{\lambda_1,...,\lambda_d\}$.

\subsection{Analysis of the saddle point equation}\label{sec:Saddlepointequation}
In order to use the saddle point method, we need to identify the critical points of the function $g$, i.e.~solve the saddle point equation given by
\begin{align}
\partial_u g(z,u)=\frac{1}{\alpha n}\sum_{j=1}^d\frac{n\alpha_j}{u-\lambda_j}-\frac{z-u}{t}&=0 \notag \\
\iff u+\frac{t}{\alpha}\sum_{j=1}^d\frac{\alpha_j}{u-\lambda_j}&=z.\label{eq:sp_eqn_2}
\end{align}

Note that for fixed $z$, there are at most $d+1$ such saddle points, since the polynomial
\begin{align}
	\label{eq:polynomial Q}
	Q_{z,t}(u):=\frac t \alpha \sum_{j=1}^d \prod_{k\neq j}\alpha_j(u-\lambda_k)-(z-u)\prod_{j=1}^d(u-\lambda_j)
\end{align}
 has degree $d+1$ in $u$. Since we assumed all $\lambda_1,...,\lambda_d$ to be different, $u=\lambda_j$ cannot be a solution to \eqref{eq:polynomial Q} for all $j\in\{1,...,d\}$. Hence, \eqref{eq:sp_eqn_2} and \eqref{eq:polynomial Q} are equivalent.

We define the branching locus $B:=\{z\in\C:Q_{z,t}(u)\text{ has a root of multiplicity} >1\}$ (which implicitly depends on $t$) and denote by $u_t^1(z),...,u_t^{d+1}(z)$, for $z\notin B$, the solutions to the saddle point equation (\ref{eq:sp_eqn_2}). Note that these are well defined even for complex $t\in\C$ and that their numbering is not canonical. For fixed $t\in\C$, we can define $u_t^1(z),...,u_t^{d+1}(z)$ as analytic functions in $z$ on any simply connected subdomain of $\C\setminus B$ and the same holds in both variables $(z,t)$. We denote the open ball by $B_r(z)= \{w\in\C : |w-z|<r \}$.

\begin{lemma}\label{lem:analyt}
The saddle points $(z,t)\mapsto u_t^j(z)$, $j=1,\dots,d+1$, are analytic within any simply connected domain of $\{(z,t)\in\C^2: z\notin B\}$. Moreover, fix $t\in\C$, $z_0\in B$, and let $V=B_r(z_0)\setminus[z_0,\infty)$ be some slit neighborhood of $z_0$ that contains no other branch points and on which, for some $k\le d+1$ and $\eps>0$, we have $|u_t^k(z)-u_t^j(z)|>\eps$ for all $j\neq k$. Then $u_t^k(z)$ can be analytically extended to $z_0$.
\end{lemma}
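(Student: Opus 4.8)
The plan is to prove a removable-singularity statement: $u_t^k$ is bounded and analytic on the slit neighborhood $V = B_r(z_0)\setminus[z_0,\infty)$, and extends continuously (hence analytically) across the slit to all of $B_r(z_0)$. The key point is to rule out that $u_t^k$ picks up monodromy as $z$ winds around $z_0$, or that it blows up. First I would record that, since $z_0\in B$ is a branch point, $Q_{z_0,t}(u)$ has a repeated root $u_0$, and the $d+1$ roots $u_t^1(z),\dots,u_t^{d+1}(z)$ depend continuously on $z$ (roots of a polynomial depend continuously on coefficients) and are uniformly bounded on a neighborhood of $z_0$, because the leading coefficient of $Q_{z,t}$ in $u$ is $1$ (from the $-(z-u)\prod_j(u-\lambda_j)$ term) and the other coefficients stay bounded as $z\to z_0$. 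So boundedness of $u_t^k$ near $z_0$ is automatic, and by the Riemann removable-singularity theorem it suffices to show $u_t^k$ extends \emph{continuously} across the slit, equivalently that $u_t^k$ is single-valued (no monodromy) on a punctured neighborhood of $z_0$.

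The monodromy is controlled by the hypothesis $|u_t^k(z)-u_t^j(z)|>\eps$ for all $j\neq k$ on $V$. The branch $u_t^k$ is the one root of $Q_{z,t}$ that stays $\eps$-isolated from the other $d$ roots throughout $V$. I would argue as follows: consider the two ends of the slit, $z$ approaching a point $z^+$ just above $[z_0,\infty)$ and $z^-$ just below; as $z\to z_0$ along a loop, the roots that collide must be among those that come within $\eps$ of each other, and by hypothesis $u_t^k$ never does. Concretely, pick a small circle $|z-z_0|=\rho$ and a continuous determination of the root set along it; the root labelled $k$ near the starting point is, by the $\eps$-separation, the unique root in the disk $B_{\eps/2}(u_t^k(z_{\text{start}}))$ at every $z$ on the circle (the other roots stay at distance $>\eps/2$), and this disk varies continuously, so the value of $u_t^k$ after going once around the circle returns to its initial value. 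Hence $u_t^k$ has trivial monodromy around $z_0$, so it is single-valued and bounded on $B_r(z_0)\setminus\{z_0\}$, and the removable-singularity theorem gives an analytic extension to $z_0$, with $u_t^k(z_0)$ equal to the common limit along all approaches — this is a root of $Q_{z_0,t}$ of multiplicity one among the limiting configuration (it is the ``non-colliding'' root).

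I expect the main obstacle to be making the ``the $\eps$-isolated root is uniquely trackable around $z_0$'' argument airtight, i.e.\ ensuring the continuous choice of root along the small circle genuinely closes up. The subtlety is that continuity of roots as a set does not by itself give a continuous single-valued \emph{branch}; one needs the separation hypothesis to upgrade set-continuity to pointwise continuity of the isolated root. The clean way is: define $\phi(z)$ = the unique root of $Q_{z,t}$ in $\overline{B_{\eps/2}(u_t^k(z))}$; one must check this ball contains exactly one root for $z$ on the circle, which follows since $u_t^k(z)$ is itself such a root and the remaining $d$ roots are at distance $>\eps > \eps/2$ from it by hypothesis; then $\phi$ is continuous (an application of Rouché, or of the fact that a root isolated from the others is a continuous function of the coefficients), agrees with $u_t^k$ on $V$, and being continuous on the full circle it is automatically single-valued, forcing trivial monodromy. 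Once monodromy is trivial the rest is the standard removable-singularity argument, so the whole proof is short; the care is entirely in the local root-tracking step. I would also note for completeness that $r$ can be shrunk if needed so that $[z_0,\infty)\cap B_r(z_0)$ contains no other branch point, which is given, and that $u_t^k$ remains distinct from the other saddle points at $z_0$ as well, since the $\eps$-gap is preserved in the limit.
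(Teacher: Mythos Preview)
Your proposal is correct and follows essentially the same approach as the paper: rule out monodromy of $u_t^k$ around $z_0$ using the $\eps$-separation hypothesis, then invoke boundedness of the roots and Riemann's removable singularity theorem. The paper phrases the monodromy step as a short proof by contradiction (if $u_t^k$ switched at the slit, its two boundary values would have to be two distinct roots $u_t^k,u_t^j$ at the same $\tilde z$, violating $|u_t^k-u_t^j|>\eps$), whereas you track the isolated root constructively via the moving ball $B_{\eps/2}(u_t^k(z))$ and Rouch\'e; these are two presentations of the same idea.
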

\begin{proof}
The function  $(t,z,u)\mapsto Q_{z,t}(u)$ is a polynomial in $t,z,u$ and its derivative in $u$ is invertible except at the branching points. By the implicit function theorem, for each $j\le d+1$, the function $u^j_t(z)$ is locally analytic in $(z,t)\in\C^2$, as long as $z\notin B$, and can be analytically extended along paths within every simply connected domain of $\{(z,t)\in\C^2: z\notin B\}$. 

For the second claim, assume that there is a non-trivial permutations of the branches around $z_0$ such that the branch $u_t^k(z)$ switches at the slit $[z_0,\infty)$. Then, $u_t^k(z)$ has different values as we approach $z\to\tilde z\in [z_0,\infty)$ from opposite sides, hence there exists some $j\neq k$ such that $u_t^j(z)$ and $u_t^k(z)$ coalesce at $\tilde z$. This contradicts the assumption that $|u_t^k(z)-u_t^j(z)|>\eps$, and therefore $u_t^k(z)$ can be analytically extended on the slit (there is no monodromy). Within $z\in B_r(z_0)\setminus\{z_0\}$, all solutions of \eqref{eq:sp_eqn_2} are bounded and so is $u_t^k(z)$, hence $z_0$ is a removable singularity and $u_t^k(z)$ can be analytically extended to $z_0$ by Riemann's Extension Theorem.
\end{proof}

\begin{lemma}\label{lem:branching}
For any $t\in\C$ we have $|B|\le 2d$.
\end{lemma}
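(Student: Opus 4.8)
The plan is to recast the branching locus $B$ as the set of $z$-values for which the polynomial $Q_{z,t}(u)$ (defined in \eqref{eq:polynomial Q}) has a multiple root in $u$, and to count these via the $u$-resultant / discriminant. First I would observe that $z \in B$ if and only if $Q_{z,t}(u)$ and $\partial_u Q_{z,t}(u)$ have a common root, equivalently $\mathrm{Res}_u(Q_{z,t}, \partial_u Q_{z,t}) = 0$. Writing everything out, the map $z \mapsto Q_{z,t}(\cdot)$ enters only through the single linear occurrence of $z$ in the coefficient list (it appears as $-(z-u)\prod_j(u-\lambda_j)$, so $z$ multiplies $-\prod_j(u-\lambda_j)$, a polynomial of degree $d$ with leading coefficient $-1$, and appears in no other term). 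Thus the discriminant $\mathrm{disc}_u(Q_{z,t})$, being a polynomial in the coefficients of $Q_{z,t}$, is a polynomial in $z$; the finiteness and the bound on $|B|$ will come from bounding its degree in $z$ and checking it is not identically zero.

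The key steps, in order: (1) Fix $t \in \C$, $t \ne 0$ (the case $t=0$ is degenerate, $Q_{z,0}(u) = -(z-u)\prod_j(u-\lambda_j)$ has a multiple root only at the $\lambda_j$, finitely many $z$; actually only $z = \lambda_j$, so $|B| \le d$ there). For $t \ne 0$, $Q_{z,t}$ has degree exactly $d+1$ in $u$ with leading coefficient $1$ independent of $z$, so the discriminant is an honest polynomial expression and the resultant with $\partial_u Q_{z,t}$ (degree $d$ in $u$) is well-behaved. (2) Determine the $z$-degree of $\Delta(z) := \mathrm{disc}_u(Q_{z,t})$. I would multiply out the Sylvester determinant of $Q_{z,t}$ and $Q_{z,t}'$: the $z$-dependence sits in at most $d+1$ of the coefficients of $Q_{z,t}$ (those coming from $-(z-u)\prod(u-\lambda_j)$, namely coefficients of $u^0,\dots,u^d$) each depending linearly on $z$, and similarly for $Q_{z,t}'$ where $z$ appears in coefficients of $u^0,\dots,u^{d-1}$. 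Counting occurrences of $z$-dependent entries along any term of the Sylvester determinant gives degree in $z$ at most... — the expected clean answer is $2d$, matching the claim; I would verify this by a careful bookkeeping of which rows/columns of the $(2d+1)\times(2d+1)$ Sylvester matrix carry $z$. (3) Show $\Delta(z) \not\equiv 0$: it suffices to exhibit one $z$ for which $Q_{z,t}$ has only simple roots, e.g. $|z|$ large, where \eqref{eq:sp_eqn_2} has $d+1$ distinct solutions clustered near $z$ and near the $\lambda_j$ (one branch $u \approx z$, and $d$ branches $u \approx \lambda_j$ perturbed by $O(1/z)$), all distinct. (4) Conclude $|B| = |\{z : \Delta(z) = 0\}| \le \deg_z \Delta \le 2d$.

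The main obstacle is Step (2): pinning down the exact degree $2d$ rather than some cruder bound like $2d+2$ or $2(d+1)$. A naive estimate — "$z$ appears linearly in the coefficients, and the discriminant of a degree-$(d+1)$ polynomial has degree $2d$ in its coefficients (weighted appropriately)" — is suggestive but not rigorous, because the discriminant is not of uniform degree in all coefficients. The honest route is to exploit that $z$ enters $Q_{z,t}(u)$ only through the term $-z\prod_{j=1}^d(u-\lambda_j)$, i.e. $Q_{z,t}(u) = R_t(u) - z\, S(u)$ with $S(u) = \prod_{j=1}^d(u-\lambda_j)$ fixed of degree $d$ and $R_t$ of degree $d+1$. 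Then $\partial_u Q_{z,t} = R_t' - z S'$, and I would compute $\mathrm{Res}_u(R_t - zS,\, R_t' - zS')$ directly as a polynomial in $z$. Its top-degree behaviour in $z$ is governed by $\mathrm{Res}_u(S, S')$-type terms up to lower order; since $\deg S = d$ and $\deg S' = d-1$, and the resultant of a degree-$d$ and degree-$(d-1)$ polynomial is a bracket of total degree $d + (d-1) - \dots$ — again one must be careful because $\deg Q = d+1 > \deg S$, so the leading $u$-coefficient comes from $R_t$, not $zS$. A clean way to finish: substitute and track, via the product formula $\mathrm{disc}_u(Q_{z,t}) = \pm \prod_{i<k}(u_i(z) - u_k(z))^2$ over the $d+1$ roots $u_i(z)$, that each $u_i(z)$ grows like $z$ (for the one large branch) or stays bounded (the other $d$), so each factor $(u_i - u_k)^2$ contributes $z$-degree $2$ when exactly one of $i,k$ is the large branch — that happens for $d$ pairs — and $z$-degree $0$ otherwise, for a total of $2d$. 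I would present the proof through this root-counting lens, as it most directly yields the sharp constant $2d$ claimed in the lemma.
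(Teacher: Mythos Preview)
Your overall strategy is correct and matches the paper's: identify $B$ as the zero set of $\Delta(z):=\mathrm{Res}_u(Q_{z,t},\partial_u Q_{z,t})$, bound $\deg_z\Delta$ by $2d$, and show $\Delta\not\equiv 0$ by exhibiting $d+1$ distinct roots for large $|z|$ (the paper does this via Rouch\'e, exactly as you sketch).

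The one place you and the paper diverge is Step~(2), the degree bound, which you flag as the main obstacle. Your root-asymptotics argument (one branch $u_{d+1}(z)\sim z$, the others bounded near the $\lambda_j$, so $\prod_{i<k}(u_i-u_k)^2\sim C z^{2d}$, and since $\Delta$ is already known to be a polynomial in $z$ this pins down its degree) is valid and in fact yields $\deg_z\Delta=2d$ sharply. The paper instead uses a one-line structural observation about the Sylvester matrix that you almost reached but did not quite isolate: the $(2d+1)\times(2d+1)$ Sylvester matrix of $(Q_{z,t},Q_{z,t}')$ has its \emph{first column} containing only the two leading coefficients, both independent of $z$; expanding along that column, every term is a $z$-free factor times a $2d\times 2d$ minor whose entries are each affine in $z$, giving $\deg_z\Delta\le 2d$ immediately. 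This bypasses the asymptotic analysis entirely and is worth knowing as the cleaner shortcut.
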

These $z\in B$ are precisely those points such that $\partial_u g=\partial_u^2g=0$, i.e.~such that solutions $u(z)$ of \eqref{eq:sp_eqn_2} are degenerate saddle points. We expect the branch points to be the natural endpoints of the curves forming the support of $\mu_t$ and we will make this rigorous for sufficiently small $t>0$ in Corollary \ref{cor:small_t} below.

\begin{proof}
The set $B$ consists precisely of those $z\in\C$ such that $Q_{z,t}(u)$ has a root of multiplicity $>1$. 
Suppose $t\neq 0$, since for $t=0$ the claim follows immediately from the fact that $\lambda_1,\dots,\lambda_{d}$ are different.
By definition of the resultant $R(z):=\mathrm {res}_u(Q_{z,t},Q_{z,t}')$, $Q_{z,t}(u)$ can have a root of multiplicity $>1$ if and only if $R(z)=0$. The resultant $R$ is a homogeneous polynomial of degree $2d+1$ in the coefficients $c_k(z,t),\,k=0,\dots, d+1$, of $Q_{z,t}$, each of which is affine linear in $z$ and $t$, and is defined as the determinant of the Sylvester matrix with the first column only containing two non-zero entries, given by the leading coefficients of $Q_{z,t}$ and $Q_{z,t}'$. Now, since the leading coefficients of both $Q_{z,t}$ and $Q_{z,t}'$ do not depend on $z$ (and $t$), $R(z)$ is a polynomial of degree at most $2d$ in $z$ (and $t$). It follows that for each fixed $t\neq 0$ we have at most $2d$ branching points $z\in B$, unless $R(z)$ is the constant polynomial equal to $0$. However, we exclude this possibility by showing that for sufficiently large $|z|$, we have $d+1$ different zeros of $Q_{z,t}$, and hence $R(z)$ is not identically  $0$.

Using Rouch\'e's theorem, we will now verify that for large $|z|$ there are $d$ saddle points close to $\lambda_1,\dots,\lambda_d$ and one saddle point close to $z$.
Fix  $0<\varepsilon<\frac 1 2 \min_{i,j\le d}|\lambda_j-\lambda_i|$. Then, by Rouch\'e's theorem,  the functions $u\mapsto (u-z)$ and $u\mapsto (u-z)+\frac t \alpha \sum_i \frac{\alpha_i}{u-\lambda_i}$ have the same number of zeros in $B_\varepsilon(z)$ since
$$\left\lvert\frac {t}{\alpha}\sum_i \frac{\alpha_i}{u-\lambda_i}\right\rvert\sim \left\lvert\frac t u\right\rvert<\varepsilon =|u-z| ,\qquad u\in \partial B_\varepsilon (z)$$
as $|z|\to \infty$ (and hence $|u|\to\infty$). Consequently, there is one saddle point, denoted by  $u_t^{d+1}(z)$, at distance  $< \varepsilon$ to $z$. Let $1\leq j \leq d$ be fixed. Again  by Rouch\'e's theorem, the functions  $u\mapsto (z-u)\prod_i(u-\lambda_i)$ and $u\mapsto \frac t \alpha \sum_i\prod_{k\neq i}\alpha_i(u-\lambda_k)-(z-u)\prod_i(u-\lambda_i)$ have the same number of zeros in $B_\varepsilon(\lambda_j)$ since
$$\left|(z-u)\prod_i(u-\lambda_i)\right|>\left|\frac t \alpha \sum_i\prod_{k\neq i}\alpha_i(u-\lambda_k)\right| ,\qquad u\in \partial B_\varepsilon (\lambda_j)$$
for $|z|$ sufficiently large. (Indeed, the left-hand side diverges to $\infty$ as $|z|\to \infty$, while the right-hand side stays bounded).  Hence, for every $1\leq j \leq d$, there exists a saddle point  $u_t^{j}(z)$ at distance  $< \varepsilon$ to a corresponding $\lambda_j$. By the choice of $\eps$, all these saddle points are different.
\end{proof}

\begin{remark}\label{rem:distinct_zeros}
Further, note that $P_t^n$ has simple roots for all except finitely many $t>0$. Indeed, the resultant $t\mapsto \tilde R(t)=\mathrm{res}_z (P_t^n(z),\partial_z P_t^n(z))$ is a polynomial in $t$ (that is not identically $0$, as we will verify in the proof of Lemma~\ref{lem:large_t}),
and hence its set of zeros is finite.
Note that the branching points are also algebraic functions of $t$, as they are solutions of the equation $\mathrm {res}_u(Q_{z,t},Q_{z,t}')=0$ in the variable $z$.
\end{remark}

In the following, we restrict ourselves to the domain on which all saddle points are of different heights:
\begin{align*}
\mathcal D_t=\{z\in\C\setminus B : G(z, u_t^i(z))\neq G(z, u_t^j(z))\ \forall i\neq j\}.
\end{align*}

Not only has $\mathcal D_t$ full Lebesgue measure, but it is a union of domains separated by analytic curves.

\begin{lemma}\label{lem:domain}
For any fixed $t\in\C$, the complement of the domain $\mathcal D_t$, given by
 $$\mathcal D_t^c= \{z\in\C\setminus B: G(z,u_t^i(z))=G(z,u_t^j(z))\text{ for some }1\le i< j\le d+1\}\cup B,$$  consists of countably many real analytic curves. Every closed disk $K$ intersects only finitely many curves.   The curves may only terminate at branch points or the boundary $\partial K$.
\end{lemma}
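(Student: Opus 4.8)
The plan is to realize $\mathcal D_t^c$ locally as the zero set of a real-analytic function that is not identically zero, so that the structure theory of real-analytic varieties applies. Fix $t\in\C$ and work in a slit neighborhood $V=B_r(z_0)\setminus[z_0,\infty)$ of an arbitrary point $z_0$, chosen (via Lemma \ref{lem:analyt}) small enough that it contains at most one branch point (namely $z_0$, if $z_0\in B$) and that all branches $u_t^1(z),\dots,u_t^{d+1}(z)$ are well-defined analytic functions on $V$ with the coalescing ones grouped together. On $V$, for each pair $i<j$ the function $z\mapsto G(z,u_t^i(z))-G(z,u_t^j(z))$ is real-analytic (it is the real part of a holomorphic function, being a difference of values of the holomorphic multivalued $g$ along analytic branches). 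First I would show that none of these $\binom{d+1}{2}$ functions vanishes identically on $V$: if $G(z,u_t^i(z))=G(z,u_t^j(z))$ on an open set, then by harmonicity/analyticity the two holomorphic branches $g(z,u_t^i(z))$ and $g(z,u_t^j(z))$ differ by a purely imaginary constant, and I would derive a contradiction by letting $|z|\to\infty$ along the arguments of Lemma \ref{lem:branching} (where $u_t^{d+1}(z)\sim z\to\infty$ while $u_t^j(z)\to\lambda_j$, so $G(z,u_t^{d+1}(z))-G(z,u_t^j(z))\to+\infty$; the remaining pairs stay bounded but separated by the same asymptotic expansion) — since all these objects are algebraic/analytic in $(z,t)$, vanishing on an open set forces vanishing everywhere, contradicting the behavior at infinity. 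Thus each difference is a nonzero real-analytic function, its zero set is a real-analytic curve (a one-dimensional real-analytic variety, locally a finite union of arcs), and $\mathcal D_t^c\cap V$ is the finite union over pairs $(i,j)$ of these curves, together with possibly the point $z_0$.

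Next I would upgrade this local picture to the global statement. Covering $\C\setminus B$ by countably many such slit neighborhoods and noting that $B$ is finite by Lemma \ref{lem:branching}, we get that $\mathcal D_t^c$ is a countable union of real-analytic curves plus finitely many points. For the finiteness on a compact set: given a closed disk $K$, cover $K$ by finitely many of the neighborhoods $V$ above (possible by compactness, after shrinking radii); on each $V$ only finitely many analytic arcs occur, so only finitely many curve-components of $\mathcal D_t^c$ meet $K$. The statement that the curves can terminate only at branch points or at $\partial K$ follows because at any interior point $z_*\in K\setminus B$ all branches are analytic and distinct, so locally each $G(z,u_t^i(z))-G(z,u_t^j(z))$ is a nonzero real-analytic function near $z_*$, whose zero set is either empty near $z_*$ or a real-analytic curve passing through (and hence continuing past) $z_*$ — it cannot have a free endpoint there. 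Equivalently, a level set $\{F=0\}$ of a nonconstant real-analytic $F$ has no isolated endpoints in the interior of its domain of analyticity.

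The main obstacle I expect is the non-vanishing claim, i.e.\ ruling out that some pair of saddle heights coincides on an open set (or, worse, on all of a connected component of $\C\setminus B$). The clean way is to exploit that $g(z,u_t^i(z))$ are branches of an algebraic function: the difference $h_{ij}(z):=g(z,u_t^i(z))-g(z,u_t^j(z))$ satisfies, after fixing $t$, an algebraic relation, so if $\Re h_{ij}\equiv 0$ on an open set then $h_{ij}$ is a purely imaginary constant there, hence (by analytic continuation along $\C\setminus B$, which is connected after removing finitely many points — note $\C\setminus B$ is connected) $h_{ij}$ would be constant on each sheet-connected piece, contradicting the divergence $h_{(d+1)j}(z)\to+\infty$ as $|z|\to\infty$ established in the proof of Lemma \ref{lem:branching}, and similarly contradicting $\Re(h_{ij})$ not being identically constant for the pairs among $u_t^1,\dots,u_t^d$ by comparing the $\frac{1}{\alpha}\alpha_j\log|u-\lambda_j|$ singular terms as $z$ approaches a configuration pushing one saddle near $\lambda_j$. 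One must be slightly careful that the labeling of branches is only local, but since $\C\setminus B$ is connected, monodromy only permutes the $h_{ij}$ among themselves, so ``some $h_{ij}$ is an imaginary constant somewhere'' globalizes to ``some $h_{ij}$ is an imaginary constant everywhere on its sheet'', which is what gets contradicted at infinity.
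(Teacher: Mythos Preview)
Your outline is broadly workable, but you are making the non-vanishing step harder than necessary and you miss the key computation that drives the paper's proof. The paper observes that, by the chain rule and the saddle point equation $\partial_u g(z,u_t^j(z))=0$,
\[
\partial_z\big(g(z,u_t^i(z))-g(z,u_t^j(z))\big)=(\partial_z g)(z,u_t^i(z))-(\partial_z g)(z,u_t^j(z))=\tfrac{1}{t}\big(u_t^j(z)-u_t^i(z)\big),
\]
which is nonzero for all $z\in\C\setminus B$ since the branches are distinct there. This single formula does two things at once: it shows the harmonic function $G(z,u_t^i(z))-G(z,u_t^j(z))$ has no critical points on $\C\setminus B$ (so if it vanished on a ball, differentiating would force $u_t^i=u_t^j$, a contradiction), and it feeds directly into the implicit function theorem to give that the nodal set is locally a single smooth arc with no self-intersections and no interior endpoints. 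Your route via asymptotics at infinity and analytic continuation/monodromy can be made to work for non-vanishing, but it is more cumbersome and only yields a general real-analytic variety (possibly with singular points) rather than smooth non-critical level curves.

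The second substantive difference is near branch points. You pass over this with ``together with possibly the point $z_0$'' and rely on general variety structure, but on your slit neighborhood the branches are only analytic away from $z_0$, so the behavior of the nodal set as it approaches $z_0$ (how many arcs emanate, whether they accumulate) is not covered by your argument. The paper handles this with a Puiseux expansion of the coalescing branches at $z_0$, computing the leading term of $g(z,u_t^i(z))-g(z,u_t^j(z))$ as a constant times $(z-z_0)^{(r+1)/r}$ for ramification index $r$, which shows that exactly $r+1$ equiangular rays emanate from each branch point. Combined with a maximum-principle argument excluding loops, this yields the local finiteness with an explicit bound on the number of curves; your compactness covering gives finiteness too, but without the structural information at $B$.
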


\begin{remark}
	The curves in $\mathcal D_t^c$ introduced above also arise in the context of the Stokes phenomenon, where they are known as anti-Stokes curves. The Stokes phenomenon describes the sudden change of the so-called Stokes multiplier, which determines the dominance of a saddle point in the saddle point method, when crossing an anti-Stokes line, see \cite[Section 1]{berry1989}. In close analogy, we observe that switches of the dominant saddle point in our setting can occur only upon crossing a curve in $\mathcal D_t^c$. For further background on the Stokes phenomenon, we refer the reader to \cite{berry1989} and \cite{berry1988}.
\end{remark}

The local finiteness of the number of curves may be well known to readers interested in concepts of analytic geometry, but we shall include a proof for completeness. The uninterested reader may skip the proof at first reading.

\begin{proof}
	By Lemma  \ref{lem:analyt}, $z\mapsto g(z,u_t^j(z))$ is analytic on simply connected domains of $\C\setminus B$, hence $z\mapsto G(z,u_t^j(z))$ is harmonic for each $j\le d+1$, and we are interested in the nodal set of the harmonic function $G(z,u_t^i(z))-G(z,u_t^j(z))$.
	Note that the complex chain rule for $\partial_z$ applied to the function $g$ yields
	\begin{align*}
		\partial_z (g(z,u^j_t(z)))&=(\partial_u g)(z, u^j_t(z)) \partial_z u^j_t(z)+(\partial_z g)(z,u^j_t(z))\\
		&=(\partial_z g)(z,u^j_t(z))=\frac 1 t (z-u_t^j(z))
	\end{align*}
	for any saddle point $u_t^j(z)$.
	It follows that there is no critical point $z\in\C\setminus B$ of
	\begin{align*}
		0=\partial_z \Big(g(z,u_t^i(z))-g(z,u_t^j(z))\Big)=(\partial_x-i\partial_y) \Big(G(z,u_t^i(z))-G(z,u_t^j(z))\Big)
	\end{align*}
	(this set of critical points consists precisely of the branching points $z\in B$ such that $u_t^i(z)=u_t^j(z)$).
	Note that $G(z,u_t^i(z))-G(z,u_t^j(z))=0$ cannot hold for all $z\in\C\setminus B$. Indeed, if it would hold for all $z\in B_r(z_0)\subseteq\C\setminus B$ for some open ball $B_r(z_0)$ and some $i\neq j$, then differentiation as above would imply $u_t^i(z)=u_t^j(z)$, which contradicts $z\in\mathcal{D}_t$.
	
	Hence, we may again apply the implicit function theorem to $G(x+iy,u_t^i(x+iy))-G(x+iy,u_t^j(x+iy))$, since either $\partial_x$ or $\partial_y$ is invertible, and its zero set is locally given by a curve $y(x)$ (or $x(y)$) for each $i\neq j$. In particular, no such curve can have endpoints in $\C\setminus B$. Moreover, the nodal set does not intersect itself in $\C\setminus B$, since otherwise it would fail to be locally analytic at the intersection points.
	
	We shall now show that for fixed $i,j$ only finitely many curves within the nodal set of $G(z,u_t^i(z))-G(z,u_t^j(z))$ may start at a given branching point $z_0\in B$, and that they emerge as equiangular rays. The order (or, ramification index) $r\in\N$ of the branch point $z_0$ is given by the number of branches $u_t^k(z)$ which coalesce at $z_0$ to, say, $u^0=u_t^k(z_0)$ for $k=1,\dots,r$. This value $r$ is equal to the multiplicity of the root $u^0$ of $Q_{z_0,t}(u)$ hence $2\le r\le d+1$. Using Weierstra\ss\ factorization, we can write $Q_{z,t}(u)=\tilde Q(z-z_0,u-u^0) {\tilde Q}^0 (z-z_0,u-u^0)$, where the polynomial ${\tilde Q}^0$ is non-vanishing in some neighborhood of $(0,0)$. Following e.g.~\cite[\S 6]{Fischer}, $\tilde Q(\tilde z,\tilde u)$ is an irreducible Weierstra\ss\ polynomial in $\tilde u$ of degree $r$, i.e.~$\tilde Q(0,\tilde u)=\tilde u ^r$. By Puiseux's theorem \cite[7.8]{Fischer}, there exists an analytic function $\varphi$ defined in some neighborhood of $0$ such that $\tilde Q(\tilde z^r,\varphi(\tilde z))=0$ and $\tilde Q (\tilde z^r,\tilde u)=\prod_{k=1}^{r}(\tilde u-\varphi(\zeta_r^{k-1}\tilde z))$ for $\zeta_r=\eee^{2\pi i /r}$, see \cite[7.10]{Fischer}. Now, choose some branch of $z\mapsto (z-z_0)^{1/r}=\tilde z$ in a slit neighborhood $V$ of $z_0$, then
	\begin{align}\label{eq:Puiseux}
		u_t^k(z)=u^0+\varphi(\zeta^k (z-z_0)^{1/r})\approx u^0+ \partial_z\varphi(z_0)\zeta^k (z-z_0)^{1/r} , \quad z\in V, k=1,\dots,r,
	\end{align}
	where $\approx$ is due to a Taylor expansion of $\varphi$ omitting lower order terms $\mathcal O (z-z_0)^{2/r}$.
	This is the analytic expression of the formal Puiseux expansion around $z_0$.
	If $u^k_t(z)$, for $k=r+1,\dots,d+1$, is one of the branches which do not coalesce with any of the other branches at $z_0\in B$, then the expansion becomes a usual Taylor expansion $u_t^k(z)= u^k_t(z_0)+ \partial_z u^k_t(z_0)  (z-z_0)+\mathcal O(z-z_0)^2$ for $z\in V$.
	
	We are interested in $G(z,u_t^i(z))-G(z,u_t^j(z))=0$ in the neighborhood of $z_0\in B$, where both $u^i,u^j$ coalesce into $u^0$, i.e.~$i,j\le r$. Note that if they don't, then the nodal set is again an analytic curve (which can be interpreted as two equiangular rays emerging from $z_0$), which follows from a similar argument that we shall comment on promptly.
	A Taylor expansion of $g(z,u)$ in $(z,u)$ near $(z_0,u^0)$ involves $\partial_u^\ell g(z_0,u^0)=0$ for $\ell=1,\dots r$, $\partial_u\partial_zg\equiv -\frac 1 t$, and higher order terms, which will turn out to be negligible. The zeroth-order terms in the difference $g(z,u_t^i(z))-g(z,u_t^j(z))$ cancel so that inserting \eqref{eq:Puiseux} gives the expansion in $z\in V$
	\begin{align}
		&g(z,u_t^i(z))-g(z,u_t^j(z))\nonumber \\
		=& \frac {\partial_u^{r+1}g(z_0,u^0) } {(r+1)!}\big( (u_t^i(z)-u^0)^{r+1} -(u_t^j(z)-u^0)^{r+1}\big) -\frac 1 {2t} (z-z_0)(u_t^i(z)-u_t^j(z)) +\mathcal O (z-z_0)^{\frac{r+2}r}\nonumber\\
		=&  c_{ij} (z-z_0)^{\frac{r+1}{r}} +\mathcal O (z-z_0)^{\frac{r+2}r}, \quad \text{ for }c_{ij}= \partial_z\varphi(z_0) (\zeta^i-\zeta^j)\Big(\frac {\partial_u^{r+1}g(z_0,u^0)} {(r+1)!}-\frac 1 {2t}\Big),\label{eq:G_Expansion}
	\end{align}
	where we used $\zeta_r^r=1$. (Temporarily coming back to the case that $u^i_t(z_0)\neq u_t^j(z_0)$, we see that the leading order term in the second line is $z-z_0$, implying that the branches do not coalesce at $z_0$.)
	Choosing $z\in V$ such that $G(z,u_t^i(z))-G(z,u_t^j(z))=0$, we take the real part of the above, implying
	$$\Re \big(c_{ij}(z-z_0)^{\frac{r+1}{r}}\big)\approx 0 \Leftrightarrow \cos\big( \tfrac {r+1}r \arg(z-z_0)+\arg(c_{ij})\big)\approx 0
	$$ locally around $z_0$.
	Again, $\approx$ approximates up to lower order terms  $(z-z_0)^{(r+2)/r}$. Consequently, $z$ must lie on one of the $r+1$ rays starting in $z_0$ in the direction of
	$$\arg(z-z_0)= \pi \frac r {r+1}\ell +\frac{r}{r+1}\big(\frac\pi 2 -\arg (c_{ij})\big),\quad \text{ for }\ell=0,\dots, r.$$
	As claimed, these angles are equidistant and, for $\ell=r+1$, they sum to a complete rotation, since the first phase is $\pi r$ for some $r\ge 2$. Finally, since $r\le d+1$, we have at most $d+2$ rays starting from each of the at most $2d$ branching points and the nodal set contains at most $2d(d+2)$ curves. A union bound over all possible pairs $i,j$  contributes a factor of $d(d+1)/2$, yielding at most $d^2(d+1)(d+2)$ many curves within $\mathcal D_t^c$, each connecting a branching point either to another branching point or to infinity.
	
	Moreover, the maximum principle for the harmonic function $G(z,u_t^i(z))-G(z,u_t^j(z))$ implies that its nodal set cannot have self-loops in any simply connected subset of $\C\setminus B$. The same reasoning also excludes loops from $z_0\in B$ back to itself, two different curves between a pair of branch points, or any other collection of curves within the nodal set encircling any bounded area. Indeed, suppose there exists a curve $\gamma$ encircling a bounded domain $D$, then  $G(z,u_t^i(z))-G(z,u_t^j(z))$ is harmonic on $D \setminus \cup_{z_0\in B}\overline{B_\varepsilon (z_0)}$ for some small $\varepsilon>0$ and attains its maximum on the boundary. However, its value is zero on $\gamma$ and of small order $c\varepsilon$ on $\partial B_\varepsilon (z_0)$, for some $c>0$, by \eqref{eq:G_Expansion}. Hence, $\sup_{z\in D}|G(z,u_t^i(z))-G(z,u_t^j(z))|\le c\varepsilon$ for all $\varepsilon>0$, which yields a contradiction.
	
	Finally, let us take some compact disk $K\subseteq \C$ and fix $i$ and $j$ again. Then, by the Puiseux expansion, we only have finitely many arcs in $K\cap B_\varepsilon (z_0)$ for all $z_0\in B$. Within a distance from branching points, we take some neighborhood $V_z$ of each $z\in K\setminus \cup_{z_0\in B} {B_\varepsilon(z_0)}$ where $\Gamma_{i,j}^{V_z}$ consists of at most a single curve (by the implicit function theorem above). By compactness, we may choose a finite subcover of $K\setminus \cup_{z_0\in B} {B_\varepsilon(z_0)}\subseteq \cup_{z}V_z$. Thus, there are only finitely many curves within $\Gamma_{i,j}^D$ for any simply connected $D\subseteq K\setminus B$. Taking the union over all $1\le i<j\le d+1$ and $B_\varepsilon (z_0)$, $z_0\in B$, we obtain that also $\mathcal D_t^c\cap K$ is a finite union of real analytic curves.
\end{proof}

\begin{remark}
	The proof also reveals an explicit non-optimal bound $d^2(d+1)(d+2)$ on the number of curves ending in branching points.	In generic situations, the branch points are simple and the preceding argument shows that exactly three curves originate from each of them, see the first two simulations in Figure \ref{fig:ex} and Lemma \ref{lem:simple_B} below. The last simulation illustrates a non-generic situation (caused by the symmetry of the initial zeros) in which branch points coalesce and four curves emerge from them. In this case, additional curves may pass through these branch points (as does the real line in Figure \ref{fig:ex}). As indicated by the simulation, determining the precise arcs that support $\mu_{n,t}$ is a challenging task.
\end{remark}

\begin{figure}[t]
	\centering
	\includegraphics[width=0.8\linewidth]{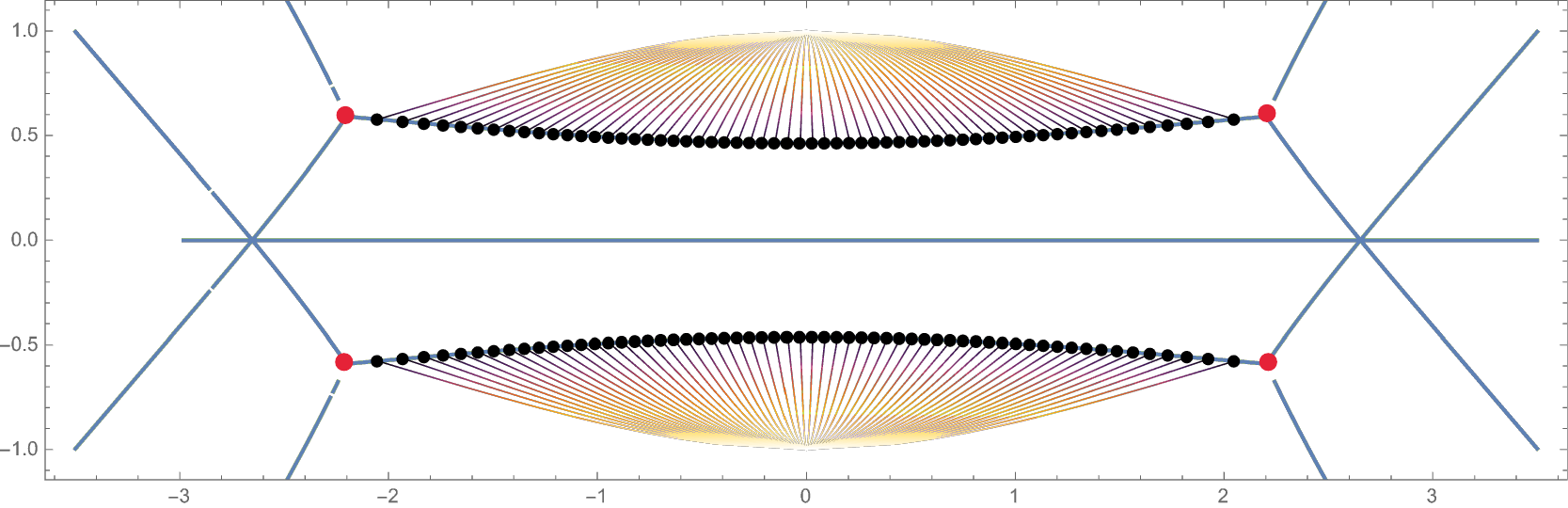}\\
	\includegraphics[width=0.8\linewidth]{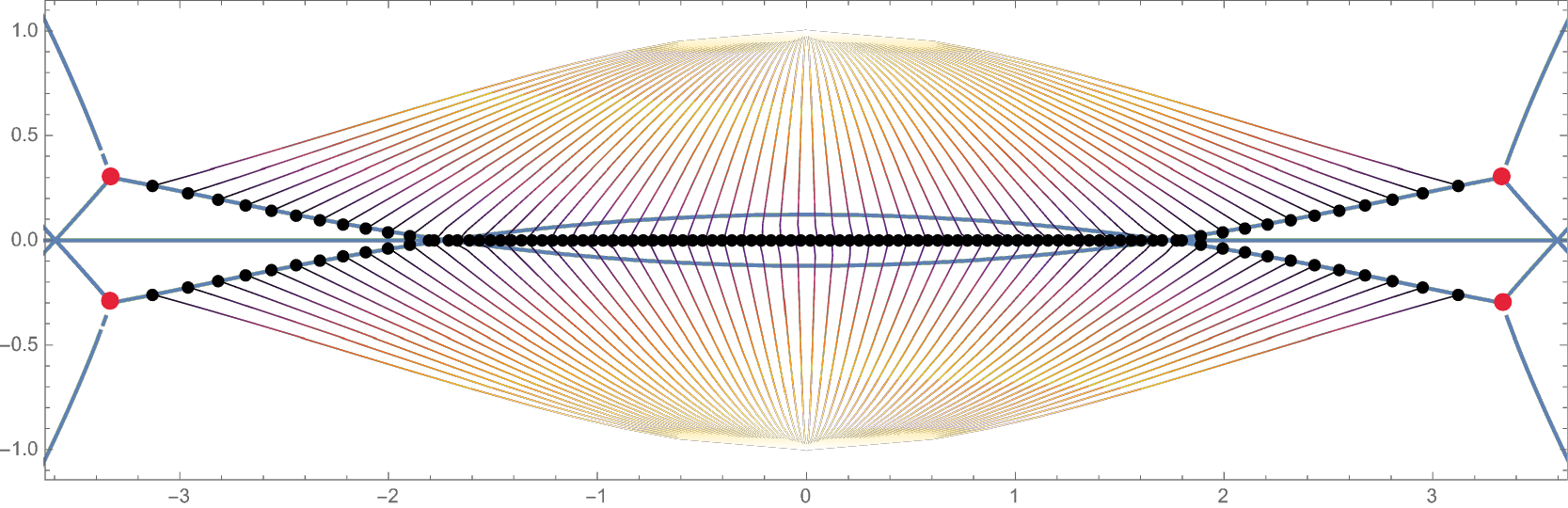}\\
	\includegraphics[width=1\linewidth]{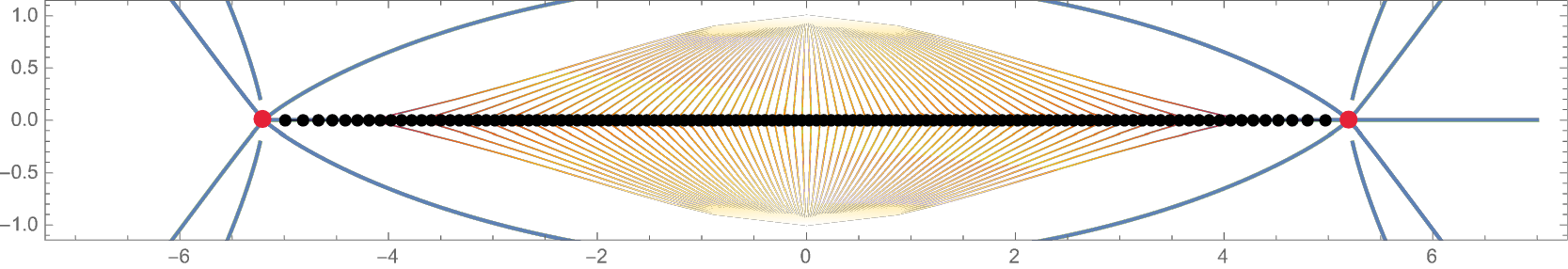}
	\caption{The curves in $\mathcal D_t^c$ in blue for $d=2$ with $\lambda_1=i,\lambda_2=-i$ and $t=2$, $t=4$ and $t=8$, including the finite $n=60$ approximation of $\mu_t$ of individual zeros (black), their trajectories (in orange) and the branching points (red).}
	\label{fig:ex}
\end{figure}

\subsection{Limiting logarithmic potential}

The crucial step in the description of the limit measure $\mu_t$ is the following convergence of the logarithmic potential. To begin with, observe that the  logarithmic potential of the measure $\mu_{n,t}$ is given by $$U_{n,t}(z)=\frac{1}{\alpha n}\log \left|e^{-\frac{t}{2\alpha n}\partial_z^2}[P^n(z)]\right|=\frac{1}{\alpha n}\log|P_t^n(z)|.$$

\begin{prop}
\label{prop:log}
Let $t>0$, $z\in \mathcal D_t$, and $u_t^*(z)$ be some specific solution of \eqref{eq:sp_eqn_2} to be chosen later, see Lemma~\ref{lem:saddle}. Then,
\begin{align*}
U_t(z):=\lim_{n\to\infty}\frac{1}{\alpha n}\log\left|\int_{i\mathbb{R}}\,e^{\alpha n\,g(z,u)}\,du\right|=G\big(z,u_t^*(z)\big).
\end{align*}
\end{prop}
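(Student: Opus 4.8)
The plan is to apply the saddle point method (Theorem~\ref{thm:sp_method}) with $N=\alpha n$ to the contour integral $\int_{i\R} e^{\alpha n\, g(z,u)}\,du$, but the main issue is that the natural contour of integration, the imaginary axis $i\R$, does not pass through the relevant saddle point nor is it a steepest descent contour. So the first step is a \emph{contour deformation}: I would deform $i\R$ to a contour $\gamma$ that passes through a suitably chosen saddle point $u^*_t(z)$ (one of the solutions $u^1_t(z),\dots,u^{d+1}_t(z)$ of \eqref{eq:sp_eqn_2}) in the steepest-descent direction, while keeping $\gamma$ in the region where $G(z,u)\to -\infty$ as $|u|\to\infty$ along $\gamma$, so that the tails are negligible and Cauchy's theorem applies (the integrand $e^{\alpha n g(z,u)} = e^{\frac{(z-u)^2}{2t/(\alpha n)}}P^n(u)$ is entire in $u$, so no poles are crossed; one only needs to control the behavior at infinity, where $\Re((z-u)^2/(2t))\to -\infty$ on appropriate directions since $t>0$).

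The second step is the \emph{choice of the dominant saddle point}. Among the finitely many saddle points $u^j_t(z)$, their ``heights'' $G(z,u^j_t(z))$ are pairwise distinct precisely because $z\in\mathcal D_t$; I would let $u^*_t(z)$ be the saddle point achieving the maximum height $\max_j G(z,u^j_t(z))$ — though, as the excerpt hints (``some specific solution ... to be chosen later, see Lemma~\ref{lem:saddle}''), the actual selection is subtler and dictated by which saddle is reachable by an admissible deformation of $i\R$; I would defer to that lemma for the precise topological construction of the steepest-descent contour through $u^*_t(z)$ on which $u^*_t(z)$ is the \emph{unique} maximizer of $\Re g$ (assumption (iv) of Theorem~\ref{thm:sp_method}). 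Granting such a contour $\gamma$, Theorem~\ref{thm:sp_method} immediately gives
\[
\lim_{n\to\infty}\frac{1}{\alpha n}\log\left|\int_{\gamma} e^{\alpha n\, g(z,u)}\,du\right| = \Re\,g(z,u^*_t(z)) = G(z,u^*_t(z)),
\]
and since $\int_{i\R} = \int_\gamma$ by Step 1, combined with Lemma~\ref{lem:int} and the elementary fact that the prefactor $\frac{-i}{\sqrt{2\pi t/(\alpha n)}}$ contributes $\frac{1}{\alpha n}\log|\text{prefactor}| = O\!\left(\frac{\log n}{n}\right)\to 0$, we conclude $U_t(z)=\lim_n \frac{1}{\alpha n}\log|P^n_t(z)| = G(z,u^*_t(z))$.

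The third step is to justify calling this limit $U_t(z)$, i.e.\ to identify it with the logarithmic potential of a genuine limiting measure; but here I would simply invoke that $U_{n,t}(z)=\frac{1}{\alpha n}\log|P^n_t(z)|$ is the logarithmic potential of $\mu_{n,t}$, that the $\mu_{n,t}$ have uniformly compact support (the zeros of $P^n_t$ stay bounded, which follows from the integral representation and a crude bound, or can be taken from the Rouché-type analysis already in the excerpt), hence are tight, and that pointwise convergence of logarithmic potentials on the full-measure set $\mathcal D_t$ together with tightness upgrades to weak convergence of $\mu_{n,t}$ to a measure $\mu_t$ with $U_t = G(\cdot, u^*_t(\cdot))$ on $\mathcal D_t$ — this ``lifting'' is exactly the ``additional arguments'' referenced in the introduction and is presumably carried out in the remainder of Section~3, so at the level of this proposition I would only prove the displayed limit.

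The main obstacle, by a wide margin, is \textbf{Step 1 combined with Step 2}: producing an admissible steepest-descent contour $\gamma$ homotopic to $i\R$ (through entire integrand, so homotopy is free, but one must control the behavior at infinity), passing through a saddle point, and on which that saddle point strictly dominates $\Re g$. The difficulty is genuinely topological — the level curves of $G(z,\cdot)$ partition the plane, the multivaluedness of $\log$ in $g$ must be handled, and which saddle point ends up dominant depends on $z$ and can change across the curves of $\mathcal D_t^c$ (this is exactly the Stokes phenomenon mentioned in the remark). I would expect the heavy lifting of this step to be isolated in Lemma~\ref{lem:saddle}, and the proof of Proposition~\ref{prop:log} itself to be a relatively short assembly once that lemma is in hand.
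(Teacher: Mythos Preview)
Your proposal is correct and follows essentially the same approach as the paper: deform $i\R$ to a contour $\gamma$ through the saddle point supplied by Lemma~\ref{lem:saddle}, apply Theorem~\ref{thm:sp_method} on the bounded middle piece $\gamma_C$, and estimate the tails $[\pm iC,\pm i\infty)$ separately via the asymptotic $G(z,u)\sim -\Im(u)^2/(2t)$. One clarification worth making explicit: your first guess that $u_t^*(z)$ is the saddle of \emph{maximal} height $\max_j G(z,u_t^j(z))$ is genuinely wrong in general---the paper allows for ``irrelevant'' saddle points strictly higher than $u_t^*(z)$---but you correctly retract this and defer to the topological characterization in Lemma~\ref{lem:saddle}, which is exactly what the paper does.
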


Later, we will see that $U_t$ is a logarithmic potential of some probability measure $\mu_t$.

In the upcoming subsections, we will be occupied with the proof of Proposition~\ref{prop:log}. One essential step is to  study the behavior of the function $g$ in order to choose the right saddle point $u_t^*(z)$.
For fixed $t>0,\,z\in\mathcal D_t$, the asymptotics of the function $G$ are summarized as follows.

There are $d$ singularities at $u=\lambda_1,\dots,u=\lambda_d$   and $G(z,u)\to -\infty$ as $u\to \lambda_j$. Moreover,
\begin{align}\label{eq:G_asymptotics_Re}
	G(z,u)\sim \Re(u)^2/2t\to +\infty \quad \text{as } |\Re(u)|\to\infty,
\end{align}
 while $|\Im (u)|$ stays bounded and
\begin{align} \label{eq:G_asymptotics}
	G(z,u)\sim -\Im(u)^2/2t\to -\infty \quad \text{as } |\Im(u)|\to\infty,
\end{align}
while $|\Re (u)|$ stays bounded.

Here, and in the following, we denote asymptotic equivalence by $f\sim g$, that is $f(x)/g(x)\to 1$ as $x\to\infty$ (or, $x\to 0$). We want to deform the integration contour in Lemma~\ref{lem:int} (originally, the imaginary axis $i\R$) in a way such that:
\begin{itemize}
\item[(i)]The chosen contour passes through a saddle point of the function $g$.
\item[(ii)]This saddle point is the unique critical point on the contour and maximizes $\Re\,g=G$.
\item[(iii)]$g$ is holomorphic in a neighborhood of the contour, i.e.~the contour does not contain the singularities $\lambda_1,\dots,\lambda_d$.
\end{itemize}
Before we choose such a path $\gamma$, we have to find a suitable saddle point $u_t^*$. To this end, it is necessary to understand the topology of the sublevel sets of the function $G$.

\subsection{Choice of Contour}
\label{subsec:contour}
Let $t>0$ and $z\in\mathcal D_t$ be fixed. We define the sublevel sets
\begin{align*}
	\Sigma(h)\coloneq  \{u\in \mathbb C : G(z,u)\le  h\},
\end{align*}
where $h\in\R$ can be interpreted as a height parameter. 

\begin{lemma}
	\label{lem:saddle}
	For each $t>0$,  $z \in \mathcal D_t$, there exists a unique saddle point $u_t^*(z)$ such that:
	\begin{enumerate}
		\item There exists a path $\gamma=:(-i\infty, -i C]\cup\gamma_C\cup [+iC, +i\infty)$ from $-i\infty$ to $+i\infty$, for some $C>0$ and some simple curve $\gamma_C$ from $-iC$ to $+iC$, that contains $u_t^*(z)$ and avoids the singularities $\lambda_1,\dots,\lambda_d$.
		\item For all points $u\in\gamma\setminus \{u_t^*(z)\}$, we have $G(z,u_t^*(z))>G(z,u)$.
		\item 
		The choice $u_t^*(z)$ is the unique saddle point $u_t^j(z)$ such that for each $\eps>0$ there exists no path in $\Sigma(G(z,u_t^j(z))-\eps)$ connecting $-i\infty$ and $+i\infty$, whereas there exists such a path in $\Sigma(G(z,u_t^j(z))+\eps)$.
	\end{enumerate}
\end{lemma}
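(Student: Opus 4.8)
The strategy is to analyze the topology of the sublevel sets $\Sigma(h)$ as the height $h$ increases from $-\infty$ to $+\infty$, using the asymptotic description of $G$ recorded above, and to isolate the unique height at which the connectivity of $\Sigma(h)$ ``between $\pm i\infty$'' changes. First I would set up the topological picture. By \eqref{eq:G_asymptotics_Re}, for $|\Re(u)|$ large the function $G$ is large positive, so for every $h$ the set $\Sigma(h)$ is contained in a vertical strip $|\Re(u)|\le R(h)$; by \eqref{eq:G_asymptotics}, for $|\Im(u)|$ large $G$ is large negative, so $\Sigma(h)$ always contains the two half-lines $\{u:\Im(u)>M(h)\}$ and $\{u:\Im(u)<-M(h)\}$ (intersected with the strip). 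Thus the question whether a path inside $\Sigma(h)$ joins $-i\infty$ to $+i\infty$ is really the question whether the ``top'' and ``bottom'' unbounded components of $\Sigma(h)$ belong to the same connected component. For $h$ very negative this fails: near each $\lambda_j$ there is a small component of $\Sigma(h)$ (since $G\to-\infty$ there), and away from neighborhoods of the $\lambda_j$ and of $\Im(u)=\pm\infty$ one has $G>h$, so the top and bottom pieces are separated by the ``mountain range'' $\{G>h\}$. For $h$ very large, $\Sigma(h)$ is the whole strip minus tiny holes at the $\lambda_j$, which is connected, so a path exists. Define
\[
 h^* \coloneqq \inf\{h\in\R : \text{there is a path in }\Sigma(h)\text{ from }-i\infty\text{ to }+i\infty\}.
\]
This infimum is finite by the two extreme regimes, and I would show it is attained in the weak sense that no path exists at height $h^*-\eps$ but one does at height $h^*+\eps$ — this is essentially the definition, but one must rule out that the transition happens ``at $-\infty$ in $h$'' (excluded by the separation argument) and confirm monotonicity of the existence property in $h$, which is immediate since $\Sigma(h)\subseteq\Sigma(h')$ for $h\le h'$.

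The heart of the argument is to identify the critical height $h^*$ with the value $G(z,u_t^j(z))$ at a saddle point, and to show this saddle point is unique with the stated property. The mechanism is the standard Morse-theoretic / mountain-pass one for the harmonic function $u\mapsto G(z,u)$ on $\C\setminus\{\lambda_1,\dots,\lambda_d\}$: the connectivity type of a sublevel set of a (real-analytic, here harmonic) function can change only as $h$ passes a critical value, and critical points of $u\mapsto G(z,u)$ are exactly the points where $\partial_u g(z,u)=0$, i.e.\ the saddle points $u_t^1(z),\dots,u_t^{d+1}(z)$ — here I use that a nonconstant harmonic function has no local maxima or minima, so all its critical points are genuine saddles. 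Since $z\in\mathcal D_t$, all the values $G(z,u_t^i(z))$ are pairwise distinct, so at most one saddle point sits at height $h^*$; a passage through a non-saddle height cannot change connectivity. To see that the change actually occurs \emph{at} a saddle and not, say, through behavior at infinity, I would invoke the asymptotics again: for $h$ in any bounded range the part of $\Sigma(h)$ outside a fixed large box is topologically frozen (it is always exactly the top strip-piece plus the bottom strip-piece plus small disks at the $\lambda_j$), so every connectivity change is localized in a compact set, where the deformation-retract lemma for sublevel sets of real-analytic functions applies away from critical points. Hence there is a (unique, by the $\mathcal D_t$ assumption) saddle point $u_t^*(z)$ with $G(z,u_t^*(z))=h^*$, and property~(3) holds for it and for no other $u_t^j(z)$.

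Finally I would construct the contour $\gamma$ realizing (1) and (2). Take $h=h^*$: by the definition of $h^*$ there is a connected set in $\Sigma(h^*)$ — more precisely, in $\Sigma(h^*+\eps)$ for every $\eps>0$ — linking the top and bottom unbounded pieces, and the passage at $u_t^*(z)$ forces every such linking set at height $h^*$ to go \emph{through} the saddle $u_t^*(z)$ (this is the ``mountain pass'' point: it is the lowest point on the ridge separating the two valleys). I would then take a path that, outside a neighborhood of the saddle, stays strictly below height $h^*$ in each of the two valley components and reaches $\pm i\infty$ along the half-lines $[+iC,+i\infty)$ and $(-i\infty,-iC]$ (legitimate since those half-lines lie in $\Sigma(h^*-\delta)$ for suitable $C,\delta$ by \eqref{eq:G_asymptotics}), and near $u_t^*(z)$ crosses the ridge transversally along the steepest-descent direction, so that $u_t^*(z)$ is the unique point of $\gamma$ at height $h^*$ and all other points are strictly lower — giving (2). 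That the path avoids the $\lambda_j$ is automatic because $G\to-\infty$ there, i.e.\ those points are interior to deep wells one simply routes around, and holomorphy of $g$ near $\gamma$ follows. One mild technical point to handle: the steepest-descent curve emanating from the saddle must actually connect to the two valleys rather than spiraling or hitting another critical point — this is guaranteed because $G(z,u_t^*(z))=h^*$ is the \emph{first} critical value encountered from below, so there is no other critical point at that height (again using $z\in\mathcal D_t$) and the descent trajectory must exit into a region where $G<h^*$, which by the box-localization is one of the two valley components.

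\textbf{Main obstacle.} The delicate part is making rigorous the claim that the \emph{only} way the ``connected between $\pm i\infty$'' property can switch is by crossing a saddle height — i.e.\ gluing the local real-analytic deformation-retract argument (valid on compact critical-point-free regions) with the control at infinity so that no connectivity change leaks in from the unbounded part of $\Sigma(h)$ or from the punctures at $\lambda_j$. Once the topology of $\Sigma(h)$ outside a large fixed box is pinned down uniformly in $h$ over a bounded range (via \eqref{eq:G_asymptotics_Re}--\eqref{eq:G_asymptotics}), everything reduces to the classical Morse/mountain-pass picture on a compact set, and the distinctness of critical values on $\mathcal D_t$ does the rest.
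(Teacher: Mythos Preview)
Your proposal is correct and follows essentially the same route as the paper's proof: both analyze the topology of the sublevel sets $\Sigma(h)$ via Morse theory for the harmonic function $u\mapsto G(z,u)$, use the asymptotics \eqref{eq:G_asymptotics_Re}--\eqref{eq:G_asymptotics} to freeze the picture outside a compact box, and exploit $z\in\mathcal D_t$ to ensure a single saddle sits at the critical height $h^*$. The paper is somewhat more explicit in a few places you left sketchy --- it counts that $\Sigma(h)$ has exactly $d+2$ simply connected components for $h\ll 0$ (via a convexity argument for the two unbounded pieces), compactifies to a concrete manifold-with-boundary $M$ to invoke Morse theory cleanly, and uses the maximum principle to rule out that the merged $\Sigma^+=\Sigma^-$ component self-meets at a higher saddle --- but your ``mountain-pass'' framing and your identification of the localization step as the main obstacle match the paper's treatment. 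One small imprecision: you call $h^*$ the ``first critical value encountered from below'', but there can be (and generically are) saddle points at heights below $h^*$ that merge some $\Sigma^j$ into $\Sigma^\pm$; what is true is that $h^*$ is the first height at which the components containing $+i\infty$ and $-i\infty$ merge, and the two steepest-descent arcs from $u_t^*(z)$ enter precisely these two components, which is what you need for the contour construction.
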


\begin{definition}
	Adopting the terminology of \cite{BHS24}, we call $u^*_t(z)$ the \emph{maximally relevant} saddle point.  A saddle point $u_t^j(z)$ is called \emph{irrelevant} if $G(z,u_t^j(z)) > G(z,u_t^*(z))$. 
\end{definition}

We prove Lemma \ref{lem:saddle} by analyzing the connected components of $\Sigma(h)$ in dependence of $h$, whose homotopy type changes exactly if $h$ passes a critical value of $G$ according to well-known results from Morse theory. 
For an illustration,  see Figure \ref{fig:G}. 

\begin{proof}[Proof of Lemma~\ref{lem:saddle}]
	We fix $z\in \mathcal D_t$ such that all heights $G(z,u_t^j(z))$, $j=1,\dots,d+1$ are different.

	\begin{claim}
	We claim that $\Sigma(h)$ consists of $d+2$ simply connected components for $h\ll 0$ and is simply connected for $h\gg 0$. 
	\end{claim}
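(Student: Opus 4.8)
To prove the Claim, the plan is to describe $\Sigma(h)$ explicitly for $|h|$ large, relying only on three features of $u\mapsto G(z,u)$: it is harmonic on $\C\setminus\{\lambda_1,\dots,\lambda_d\}$ (we set $G(z,\lambda_j):=-\infty$, so that $\lambda_j\in\Sigma(h)$ for every $h$); it tends to $-\infty$ at each puncture $\lambda_j$ and as $|\Im u|\to\infty$ with $|\Re u|$ bounded; and it tends to $+\infty$ as $|\Re u|\to\infty$ with $|\Im u|$ bounded. Writing $z=a+ib$ and $u=x+iy$, so that $G(z,u)=\tfrac1\alpha\sum_j\alpha_j\log|u-\lambda_j|+\tfrac{(a-x)^2-(b-y)^2}{2t}$, a direct computation yields the one-sided estimates that drive everything: $\partial_\rho G=\tfrac{\alpha_j}{\alpha\rho}+O(1)$ near $\lambda_j$ (with $\rho=|u-\lambda_j|$), $\partial_y G=\tfrac{b-y}{t}+O(1/\mathrm{dist}(u,\{\lambda_j\}))$, and $\partial_x G=\tfrac{x-a}{t}+O(1/\mathrm{dist}(u,\{\lambda_j\}))$. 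Hence there are constants $\delta,M_0,X_0>0$ (depending only on $z,t,\lambda_1,\dots,\lambda_d$) such that the disks $B_\delta(\lambda_j)$ are pairwise disjoint and $\partial_\rho G>0$ on them, all $\Im\lambda_j$ lie in $(-M_0,M_0)$ and $\mathrm{sgn}\,\partial_y G=\mathrm{sgn}(b-y)$ on $\{|\Im u|>M_0\}$, and $\partial_x G>0$ on $\{\Re u\ge a+X_0\}$ while $\partial_x G<0$ on $\{\Re u\le a-X_0\}$.

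\emph{The regime $h\ll0$.} On $\{|\Im u|\le M_0\}\setminus\bigcup_jB_\delta(\lambda_j)$ the logarithmic term is bounded below (all $|u-\lambda_j|\ge\delta$) and $\tfrac{(a-x)^2-(b-y)^2}{2t}\ge-\tfrac{(|b|+M_0)^2}{2t}$, so $G$ is bounded below there; thus for $h$ sufficiently negative $\Sigma(h)\cap\{|\Im u|\le M_0\}\subseteq\bigcup_jB_\delta(\lambda_j)$, and (since $G$ is bounded below on each compact circle $\partial B_\delta(\lambda_j)$) this part of $\Sigma(h)$ consists of $d$ sets, one strictly inside each $B_\delta(\lambda_j)$. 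Because $\partial_\rho G>0$ there, the $j$-th set is $\{\lambda_j\}\cup\{\lambda_j+\rho e^{i\theta}:0<\rho\le r_h(\theta)\}$ for a continuous $r_h:[0,2\pi)\to(0,\delta)$, hence a disk star-shaped about $\lambda_j$. Outside the strip, $\partial_y G<0$ on $\{\Im u>M_0\}$ makes $\{y>M_0:G(z,x+iy)\le h\}$ an upward ray $(\eta_h(x),\infty)$ with $\eta_h:\R\to[M_0,\infty)$ continuous and finite everywhere (because $G(z,x+iy)\to-\infty$ as $y\to\infty$), so $\Sigma(h)\cap\{\Im u>M_0\}$ is the region above the graph of $\eta_h$ — a topological half-plane, nonempty since $G(z,a+iy)\to-\infty$ — and symmetrically for $\Sigma(h)\cap\{\Im u<-M_0\}$. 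These $d+2$ pieces lie in pairwise disjoint horizontal strips, so $\Sigma(h)$ has exactly $d+2$ simply connected components.

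\emph{The regime $h\gg0$.} On the strip $\mathcal S:=\{|\Re u-a|\le X_0\}$ the bound $G(z,x+iy)\le\log(|u|+C)+\tfrac{X_0^2}{2t}-\tfrac{(b-y)^2}{2t}$ is bounded above, say by $h_1$ (the quadratic term dominates the logarithm as $|y|\to\infty$); so for $h>h_1$ we get $\mathcal S\subseteq\Sigma(h)$ and hence $\{G>h\}\subseteq W_R\sqcup W_L$ with $W_R=\{\Re u>a+X_0\}$ and $W_L=\{\Re u<a-X_0\}$. On $W_R$, $\partial_xG>0$ and $G(z,x+iy)\to+\infty$ as $x\to\infty$ while $G\le h_1<h$ on $\partial W_R\subseteq\mathcal S$, so $\{x>a+X_0:G(z,x+iy)>h\}=(\xi_h(y),\infty)$ with $\xi_h:\R\to(a+X_0,\infty)$ continuous; thus $V_R:=\{G>h\}\cap W_R$ is the region to the right of the graph of $\xi_h$, a connected, unbounded topological half-plane, and likewise $V_L$. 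Finally $\Sigma(h)=\C\setminus(V_R\sqcup V_L)$ is connected: any $u_0\in\Sigma(h)\cap\overline{W_R}$ joins to the connected set $\mathcal S$ by the horizontal segment to $(a+X_0)+i\,\Im u_0$, which stays in $\Sigma(h)$ since $x\mapsto G(z,x+i\,\Im u_0)$ increases on $[a+X_0,\infty)$ (and symmetrically on $\overline{W_L}$); and it is simply connected, because any loop $\gamma\subseteq\Sigma(h)$ is disjoint from the connected unbounded sets $V_R,V_L$, which therefore lie in the unbounded component of $\C\setminus\gamma$, so the bounded component of $\C\setminus\gamma$ avoids $V_R\cup V_L=\C\setminus\Sigma(h)$ and lies in $\Sigma(h)$, making $\gamma$ contractible. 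This proves the Claim.

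\emph{Main obstacle.} The substantive part is establishing the three one-sided derivative estimates in the relevant ranges and reading off from them the local shape of $\Sigma(h)$ near the ends of $G$ (small disks at the $\lambda_j$, half-planes at $\pm i\infty$, the strip-like complement of two half-planes at $\pm\infty$); once these local models are in hand, the global count and the homotopy claims follow by soft arguments. One must also keep in mind the convention $G(z,\lambda_j)=-\infty$: without adjoining the points $\lambda_j$, the components around them would be punctured disks and fail to be simply connected.
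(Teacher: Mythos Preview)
Your proof is correct and takes a genuinely different route from the paper's. For $h\ll 0$, the paper obtains simple connectedness of the $d$ components near the $\lambda_j$ from the maximum principle, and handles the two components at $\pm i\infty$ via \emph{convexity in $x$} of $G(z,x+iy)$ for large $|y|$ (so horizontal slices are intervals); you instead give explicit star-shaped descriptions via radial monotonicity near each $\lambda_j$, and use \emph{monotonicity in $y$} for large $|y|$ (so vertical slices are rays), arriving at the same graph-like pictures. For $h\gg 0$, the paper does not argue directly at all: it invokes Morse theory to say that each of the $d+1$ non-degenerate saddle values merges two components, leaving $d+2-(d+1)=1$, with simple connectedness again from the maximum principle. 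Your argument is elementary and direct: you bound $G$ above on a vertical strip, show $\{G>h\}$ splits as two graph-regions $V_R,V_L$, and deduce simple connectedness of $\Sigma(h)=\C\setminus(V_R\sqcup V_L)$ by a bare-hands loop argument.

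What each buys: your approach is self-contained and avoids Morse theory, which is cleaner if one wants only the claim as stated. The paper's approach is not wasteful, however --- the Morse-theoretic merging description is exactly what is needed immediately afterward in Lemma~3.4 to single out $u_t^*(z)$ as the saddle at which $\Sigma^+$ and $\Sigma^-$ first connect, so the $h\gg0$ conclusion is a byproduct of machinery that must be set up anyway. One minor slip in your write-up: the $d+2$ pieces do not lie in ``pairwise disjoint horizontal strips'' (the $d$ disks all sit in the same strip $\{|\Im u|\le M_0\}$); rather, they lie in the pairwise disjoint open sets $B_\delta(\lambda_1),\dots,B_\delta(\lambda_d),\{\Im u>M_0\},\{\Im u<-M_0\}$, which is all you need.
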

	
	First, note that in any bounded simply connected region of $\C\setminus B$ there can be no isolated simply connected component as this would contradict the maximum principle of the harmonic function $u\mapsto G(z,u)$. Naturally, each logarithmic singularity at $\lambda_1,...,\lambda_d$ of $u\mapsto G(z,u)$ gives rise to one component of $\Sigma(h)$, which we shall denote by $\Sigma^1(h),\dots,\Sigma^d(h)$ each containing $\lambda_j\in\Sigma^j(h)$ by definition. Again, by the maximum principle, these are simply connected. For sufficiently small $h\ll 0$, they are disjoint, but the components will merge as $h$ grows, which will become important soon. 
	
	Two further components of $\Sigma(h)$ are located far out the imaginary axis: The asymptotic \eqref{eq:G_asymptotics} implies that for each fixed $z$ and $h\in\R$ there exist $C_+, C_- >0$ such that for all $y>C_+$ holds $G(z,iy)<G(z,iC_+)<h$ and, analogously, $G(z,iy)<G(z,-iC_-)<h$ for all  $y<-C_-$. We choose $h\ll 0$ and corresponding $C:=\max\{C_+, C_-\}$ such that the components $\Sigma^+(h)$ and $\Sigma^-(h)$, containing the intervals $(+iC, +i\infty)$ and $(-i\infty, -iC)$ respectively, are disjoint from $\Sigma^1(h),\dots,\Sigma^d(h)$. Moreover, we claim these components to be simply connected. Assume for the moment that $x\mapsto G(z,x\pm iy)$ is convex for $|y|>c_{\lambda,t}:=2\max\{\sqrt t, |\lambda_1|,\dots, |\lambda_d|\}$. Then, choosing $h$ so small that $C>c_{\lambda,t}$, we have $(+iC,i\infty)\subseteq\Sigma^+(h)$ and by the claimed convexity, each line $\R+iy$ with $|y|>C$ may intersect $\Sigma^+(h)$ only in an interval containing $iy$, hence $\Sigma^+(h)\cap (\R+i[C,\infty))$ is simply connected. On the other hand, $\Sigma^+(h)\setminus (\R+i[C,\infty))$ is also simply connected: In a bounded area $\Sigma^+(h)\cap ([-C,C]+i[-C,C])$ is simply connected by the maximum principle and the remaining area is empty by \eqref{eq:G_asymptotics_Re}, i.e.~$\Sigma(h)\cap \big(((-\infty,-C]\cup[C,\infty))+i[-C,C]\big)=\emptyset$. Thus, $\Sigma^+(h)$ is simply connected and the analogous statement holds for $\Sigma^-(h)$. It remains to check the claimed convexity. Since $\partial_u^2 g(z,u)=\frac 1 t-\frac 1 \alpha \sum_{j=1}^d \frac{\alpha_j}{(u-\lambda_j)^2}$, and $\frac{\partial^2}{\partial x^2}G(z, x+iy)=\Re( \partial_u^2 g(z,u))\vert_{u=x+iy}$, we have for $|y|>2\max\{\sqrt t, \Im\lambda_1,\dots, \Im\lambda_d\}$ that $|y-\Im \lambda_j|>|y|/2$, hence
	\begin{align}
		\label{eq:convexity}
		\frac{\partial^2}{\partial x^2}G(z,x+iy)
		=\Re\left(\frac 1 t-\frac 1 \alpha \sum_{j=1}^d \frac{\alpha_j}{(x+iy -\lambda_j)^2}\right)  \ge\frac 1 t -\frac 1 \alpha \sum_{j=1}^d \frac{\alpha_j}{|x+iy-\lambda_j|^2}  >\frac 1 t-\frac 4 {|y|^2} >0.
	\end{align}
	Therefore, we have shown that there is a sufficiently small $h\in\R$ such that $\Sigma(h)$ splits into the $d+2$ simply connected components $\Sigma^+(h),\Sigma^-(h),\Sigma^1(h),\dots,\Sigma^d(h)$, as claimed in the beginning.

Again by the maximum principle for harmonic functions, $u\mapsto G(z,u)$ does not have any local maxima or minima, such that every critical point is a saddle point. 
Moreover, since $z\in\mathcal D_t$, all saddle points are non-degenerate. Standard results from Morse theory imply that, as $h$ increases, the number of connected components of $\Sigma (h)$ decreases by one for each passed critical value of a non-degenerate saddle point. We refer to \cite{milnor} for further background on the theory. 

To be more precise, a classical statement holds for smooth functions $f:M\to\R$ on a compact manifold $M$ with a non-degenerate critical point $p\in M$ of certain index $k$ and states that sublevel sets $f^{-1}(-\infty, f(p)+\eps]$ have the same homotopy type as $f^{-1}(-\infty, f(p)-\eps]$ with a $k$-cell attached. In our case, we choose $f(u)=G(z,u)$, and $p=u_t^j(z)$ having index $k=1$, since these are non-degenerate saddle points. Hence, attaching $1$-cells translates to merging connected components of $\Sigma(h)$ at heights $h=G(z,u_t^j(z))$ and positions $u_t^j(z)$. In order to make the domain of $f$ compact, we choose $M$ to be a compact manifold with boundary $\partial M$ like the above-mentioned box after smoothening the corners and removing the singularities. To be very precise one may choose 
\begin{align}\label{eq:M}
M=(([-2R,2R]+i[-R,R])\cup \overline{B_R(2R)}\cup \overline {B_R(-2R)})\setminus \{u\in\C:\exists j\le d\text{ with } |u-\lambda_j|< 1/R\}
\end{align}
for some large $R>C>0$. Naturally, the homotopy type of $f^{-1}(-\infty, h]=\Sigma(h)\cap M$ may also change for heights $h$ that are critical values at the boundary, as is discussed in \cite{laudenbach} and \cite{braess}. However, our interest lies in heights $h$ in bounded intervals $I$ containing the critical values $G(z,u_t^j(z))$ and we may choose the box-size $R>0$ large enough such that no critical values of $f|_{\partial M}$ lie within $I$. Note that there is at most one critical point within $[-R,R]\pm iR$ by the above convexity whose value can be made sufficiently small. As $\Sigma(h)$ is a filtration (i.e.~$\Sigma(h_1)\subseteq\Sigma(h_2)$ for all $h_1\le h_2$), the whole connectivity structure of the sublevel sets only changes at critical values (which is more than bare homotopy type), which can be described via the so-called Reeb graph, see \cite{Reeb} for more information. In particular, this implies that once connected components remain connected as the height $h$ increases and they even remain simply connected since loops cannot appear due to the maximum principle.

In summary, we have: For all $h_1<h_2$ such that $[h_1,h_2]$ contains no critical value of $u\mapsto G(z,u)$, the inclusions $\Sigma(h_1)\hookrightarrow \Sigma(h_2)$ induce homotopy equivalences on each connected component $\Sigma^+(h),\Sigma^-(h),\Sigma^1(h),\dots,\Sigma^d(h)$. Moreover, if $h= G(z,u_t^j(z))$ is a critical value, then for sufficiently small $\varepsilon>0$ exactly two connected components of $\Sigma(h-\varepsilon)$ merge into a single connected component of $\Sigma(h+\varepsilon)$, while all other components are canonically identified via the inclusion maps. In particular, the total number of connected components decreases by exactly one.
  Since we have $d+1$ such saddle points, only one connected component of $\Sigma (h)$ remains for sufficiently large $h$. In other words, for $h\gg 0$, $\Sigma (h)$ is simply connected.

By the preceding claim, there must exist a minimal value $h_0$ such that the two components $\Sigma^+(h_0)$ and $\Sigma^-(h_0)$ meet, occurring precisely at a saddle point $u_t^j(z)$. We let $u_t^*(z)$ be this first saddle point that connects the components $\Sigma^+(h_0)$ and $\Sigma^-(h_0)$ at level $h_0\in\R$. 
In particular, this implies existence of a path $\gamma=(-i\infty, -iC]\cup\gamma_C\cup[+iC,+i\infty)$ such that $G(z,u_t^*(z))>G(z,u)$ for all other points on the contour. Regarding uniqueness, we will argue as follows.
\begin{claim}
There exists no other saddle point such that the newly formed component $\Sigma^+(h)=\Sigma^-(h)$ containing $\pm i\infty$ meets itself at a higher level $h>h_0$. In particular, for any saddle point $v^*$ with $G(z,v^*)>G(z,u^*_t(z))$, there exists no curve $\gamma$ from $-i\infty$ to $+i\infty$ and containing $v^*$ such that $G(z,v^*)>G(z,u)$ for all $u$ on $\gamma$.
\end{claim}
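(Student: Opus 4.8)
The plan is to derive both assertions from the Morse-theoretic picture of the sublevel sets $\Sigma(h)$ built up above, combined with the maximum principle for the harmonic function $u\mapsto G(z,u)$. First I would recall the two structural facts already available: as $h$ increases, the number of connected components of $\Sigma(h)$ drops by exactly one at each critical value, with two \emph{distinct} components merging each time — a handle-attaching Morse event is excluded, since it would produce a bounded complementary component $D$ of some $\Sigma(h)$ on which $G(z,\cdot)>h$ while $G(z,\cdot)=h$ on $\partial D$, contradicting harmonicity of $G(z,\cdot)$ on $D$ if no $\lambda_j\in D$, and the fact that $G(z,\cdot)\to-\infty$ near $\lambda_j$ if $\lambda_j\in D$ — and, once $\Sigma^+$ and $\Sigma^-$ have merged at level $h_0=G(z,u_t^*(z))$ through $u_t^*(z)$, they stay in one component $S(h)\subseteq\Sigma(h)$ for all $h\ge h_0$, whereas no path in $\Sigma(h)$ joins $\pm i\infty$ for $h<h_0$. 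With this in hand, the first assertion of the claim is immediate: a saddle above level $h_0$ that merged the component $S$ with itself would be exactly a handle-attaching critical point, which has just been ruled out.

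For the ``in particular'' I would argue by contradiction. Assume there is a saddle $v^*=u_t^j(z)$ with $h_1:=G(z,v^*)>h_0$ and a curve $\gamma$ from $-i\infty$ to $+i\infty$ through $v^*$ on which $v^*$ is the strict maximum of $G(z,\cdot)$. Since $z\in\mathcal D_t$, the saddle $v^*$ is non-degenerate, so on a small disk $D$ about $v^*$ the set $\{u:G(z,u)<h_1\}$ is a union of two opposite open ``descent'' sectors meeting only at $v^*$, and, because $G(z,\cdot)<h_1$ on $\gamma\setminus\{v^*\}$, the two sub-arcs of $\gamma$ approaching $v^*$ run inside these sectors. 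Choosing $\eps>0$ small enough that $v^*$ is the only critical point in $[h_1-\eps,h_1+\eps]$ and that $\{u\in\gamma:G(z,u)\ge h_1-\eps\}$ is a single sub-arc of $\gamma$ lying in $D$ (possible since this compact set shrinks to $\{v^*\}$ as $\eps\to0$), I can split $\gamma$ into a central sub-arc through $v^*$ plus two tails, each contained in $\Sigma(h_1-\eps)$ and reaching $\pm i\infty$; the tails therefore lie in $\Sigma^-(h_1-\eps)$ and $\Sigma^+(h_1-\eps)$, and since $h_1-\eps>h_0$ these coincide, both equal to $S(h_1-\eps)$.

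In the main case $\gamma$ meets $v^*$ transversally to the level set $\{G(z,\cdot)=h_1\}$ — automatic when $\gamma$ is smooth at $v^*$, which is the case of interest since the contour used in the saddle point method passes through the saddle in a steepest-descent direction. Then the two branches of $\gamma$ near $v^*$ lie in the two opposite descent sectors, one in each, so the two descent-sector pieces of $\Sigma(h_1-\eps)$ adjacent to $v^*$ are exactly those met by the two tails, and hence both lie in $S(h_1-\eps)$; raising the level past $h_1$ joins these two pieces through $v^*$, which connects $S$ to itself and produces a handle in $\Sigma(h_1+\eps)$ — contradicting the maximum principle. This yields the claim, and re-derives the first assertion, in the transversal case.

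The step I expect to be the main obstacle is ruling out the degenerate configuration in which $\gamma$ has a corner at $v^*$ with both of its branches inside a single descent sector $A$. My plan there is a local shortcut: since the part of $\gamma$ with $G(z,\cdot)\ge h_1-\eps$ then lies in $\overline A$, replacing it by an arc joining its two endpoints inside the connected open set $A\cap\{G(z,\cdot)<h_1\}$ gives a curve $\widetilde\gamma$ from $-i\infty$ to $+i\infty$ with $\max_{\widetilde\gamma}G(z,\cdot)<h_1$; iterating such downhill detours (pushing the contour across level lines away from saddle points) drives the maximum down to a value attained at a saddle point, which by the first assertion has height $\ge h_0$, so $v^*$ was never the essential obstruction on a curve joining $\pm i\infty$. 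Making this reduction fully rigorous — or, equivalently, justifying that the curves relevant to Lemma~\ref{lem:saddle} may be taken transversal at $v^*$ — is the delicate point; the transversal case itself, which is the one actually needed, follows directly from the Morse picture together with the maximum principle.
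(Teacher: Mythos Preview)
Your argument for the first assertion is essentially the paper's: if the merged component $S$ met itself at a saddle of height $h_1>h_0$, the resulting closed curve in $\Sigma(h_1)$ would enclose a bounded region on which $G(z,\cdot)>h_1$, forcing an interior local maximum and contradicting harmonicity. The paper stops here and declares the claim proved; it gives no separate argument for the ``in particular'' and does not distinguish transversal from non-transversal contours at $v^*$.

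Your treatment of the ``in particular'' is therefore strictly more detailed than the paper's. The transversal case you handle is correct and is exactly the mechanism the paper has in mind: both descent sectors at $v^*$ would lie in $S$, so the merge at $v^*$ would be a handle attachment, which the first assertion forbids. Your worry about the degenerate configuration (both branches of $\gamma$ in one descent sector) is legitimate, and in fact your proposed fix does not close it: producing a detour $\widetilde\gamma$ with $\max_{\widetilde\gamma}G(z,\cdot)<h_1$ only shows that $v^*$ can be bypassed, not that the original $\gamma$ fails to exist. Read literally, the ``in particular'' can fail for such cornered curves. The paper simply does not confront this; its real use of the claim is the uniqueness in Lemma~\ref{lem:saddle}\,(3), which follows already from the minimax characterisation of $h_0$ together with the first assertion, and for which only transversal contours matter. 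So your instinct to restrict to the transversal case ``which is the one actually needed'' is the right resolution, and it is effectively what the paper relies on without saying so.
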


By definition, $u_t^*(z)$ is the first saddle point that connects the components $\Sigma^+(h)$ and $\Sigma^-(h)$ at level $h=h_0$. Assume now, there exists another saddle point $v^*$ such that $\Sigma (h_1)$ meets itself at $h_1>h_0$. (Recall that we assumed all saddle points to be of different heights.) Then we can form a closed curve $\tilde\gamma$ that lies in $\Sigma (h_1)$, encircling a bounded region $\tilde C$ disjoint from $\Sigma (h_1)$. But $G$ is, by definition, bounded in $\tilde C$ from above hence must contain a local maximum in this area. This yields a contradiction to the maximum principle, which proves our claim and finishes the proof of Lemma~\ref{lem:saddle}.
\end{proof}

\begin{figure}[t]
	\centering
	\includegraphics[width=0.4\linewidth]{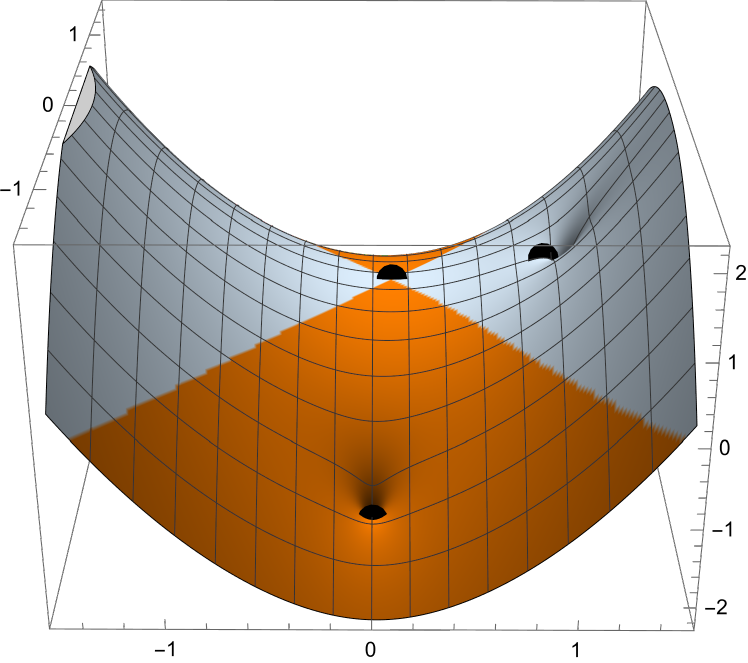}\quad
	\includegraphics[width=0.4\linewidth]{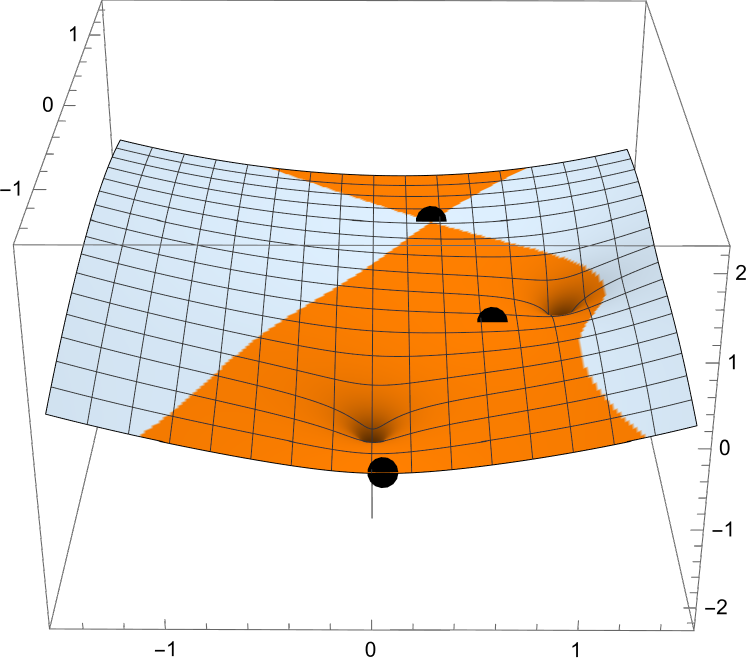}
\caption
{
The function $u\mapsto G(z,u)$ for $z=0$ and $d=2$ initial roots at $ \lambda_1=1$ and $\lambda_2=-i$ after times $t=1/2$ (left) and $t=3$ (right). The three saddle points are depicted as black dots and the level height $\Sigma (h)$ at $h=G(0,u_t^*(0))$ of the maximally relevant saddle point is the orange area where our contour will lie.
		For smaller $t=1/2$, the behavior of $G$ is dominated by its component $\Re(u^2)$. The maximally relevant saddle point $u_t^*(0)\approx (1+i)/10$ is near $z=0$, we have an irrelevant saddle point  near $\lambda_1$, and a saddle point of lower height than the maximally relevant one near  $\lambda_2$. This illustrates our findings of Lemma \ref{lem:small_t} (in particular \eqref{eq:sp_dist}) and Lemma \ref{lem:u_t_small_t}.
		For larger $t=3$, the function $G$ flattens out. No saddle point is irrelevant and the maximally relevant one at $u_t^*(0)\approx (3+8i)/10$ will eventually approach $u^+_t(0)=i\sqrt t$ as $t$ grows, another one moving towards $\lambda_1'=(1-i)/2$, and the last towards $u_t^-(0)=-i\sqrt t$. We will verify this in Lemma \ref{lem:large_t} and Corollary \ref{cor:large_t_rescaled} below.
}
\label{fig:G}
\end{figure}

\subsection{Application of the saddle point method}
\label{sec:sp_method}
Let $t>0,\,z\in\mathcal D_t$ be fixed.
We have shown in Section \ref{subsec:contour} that there exist a suitable saddle point $u_t^*$ and contour $\gamma$ such that we can deform our original path $i\R$ and integrate over $\gamma=(-i\infty,-iC]\,\cup\, \gamma_C\, \cup\, [iC,i\infty)$ instead without changing the value of the integral. First, we consider the integral over $\gamma_C$, where we can apply the saddle point method.
Letting $g(u)=g(z,u)$ in Theorem \ref{thm:sp_method} yields
\begin{align*}
	\lim_{n\to\infty}\frac{1}{\alpha n}\log\left|\int_{\gamma_C}\,e^{\alpha n\,g(z,u)}\,du\right|= \Re\,g(z,u_t^*(z))= G(z,u_t^*(z)), \qquad z\in \mathcal D_t.
\end{align*}
For the remaining parts of the integral, note that by definition of the (large) constant $C>0$ in the proof of Lemma \ref{lem:saddle}, it holds $G(z,u_t^*(z))-G(z,u)\ge G(z,u_t^*(z))-G(z,iC)=\epsilon>0$ for all $u\in [iC,i\infty)$.
Due to \eqref{eq:G_asymptotics}, we can bound the term 
\begin{align*}
	\left|\int_{iC}^{i\infty}e^{\alpha ng(z,u)}du\right|&\leq \int_{iC}^{in}e^{\alpha n G(z,u)}du+ \int_{in}^{i\infty}e^{\alpha n G(z,u)}du\\
	&\sim n e^{\alpha n (G(z,u_t^*(z))-\epsilon)}du+\int_{in}^{i\infty}e^{-\frac{\alpha n}{2t}\Im(u)^2}du
\end{align*}
as $n\to\infty$. Thus, dividing by the dominant part $\int_{\gamma_C}\,e^{\alpha n\,g(z,u)}\,du$, the contribution of the integration over $[iC,i\infty)$ becomes asymptotically negligible. The same holds for $\int_{-i\infty}^{-iC}e^{\alpha n\,g(z,u)}du$. 
Hence, we have managed to prove Proposition \ref{prop:log} via the saddle point method and can continue with proving the existence of a limit of the empirical root measure $\mu_{n,t}$.

\subsection{Limit of the Empirical Root Measure}

To prove weak convergence of the sequence $(\mu_{n,t})_{n\in\N}$, we need to verify its tightness, which requires bounding the support of the measures $\mu_{n,t}$. In the repeated differentiation setting of~\cite{BHS24}, the tightness was a simple consequence of  the Gauss--Lucas theorem. In our case, a different argument is required.

\begin{lemma}
	\label{lem:P_t^n roots}
For any fixed $t>0$, the zeros of $P_t^n$ remain within a distance of $2\sqrt t+o(1)$ from the initial zeros $\lambda_1,\dots,\lambda_d$, more precisely, for all $n\in \N$,
\begin{align} \supp(\mu_{n,t})\subset \cup_{j=1}^d B_{2\sqrt t \sqrt{1+\frac{1}{2n\alpha}}} (\lambda_j).\label{eq:support_mu_nt}
\end{align}
Consequently, the family $(\mu_{n,t})_{n\in\N}$ forms a tight sequence of probability distributions.
Moreover, if $t>0$ is sufficiently small so that \eqref{eq:support_mu_nt} is a disjoint union, then $$\mu_{n,t}\left(B_{2\sqrt{t}\sqrt{1+\frac{1}{2n\alpha}}}(\lambda_j)\right)=\frac{\alpha_j}{\alpha}, \qquad j=1,\ldots, d.$$
\end{lemma}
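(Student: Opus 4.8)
The plan is to bound the support of $\mu_{n,t}$ directly from the contour-integral representation of Lemma~\ref{lem:int} by showing that the integrand cannot vanish outside the claimed disks. Writing $P_t^n(z) = c_n\int_{i\R}\exp\!\big(\frac{\alpha n}{2t}(z-u)^2\big)P^n(u)\,du$ with $c_n\neq 0$, I first observe that $P^n(u)$ has all its zeros at $\lambda_1,\dots,\lambda_d$ and that the Gaussian factor is nonzero everywhere. The key is the classical fact (going back to P\'olya--Schur type arguments, or directly to the heat semigroup on entire functions) that the holomorphic heat flow applied to a polynomial of degree $N$ cannot move its zeros too far: if $Q$ has all zeros in a disk $B_\rho(a)$, then $e^{-s\partial_z^2}Q$ has all zeros in $B_{\sqrt{\rho^2 + 2Ns}}(a)$, or some variant thereof. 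First I would make this precise in the form I need. Concretely, with $Q=P^n$ of degree $N=\alpha n$ and $s = \frac{t}{2\alpha n}$, if all zeros of $P^n$ lay in a single disk $B_\rho(a)$ this would give zeros of $P_t^n$ in $B_{\sqrt{\rho^2 + t}}(a)$; but here the zeros are spread among several $\lambda_j$, so I instead need a statement localized near each $\lambda_j$, which is exactly what the saddle-point geometry already provides.

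The cleaner route, and the one I would actually carry out, is via the asymptotic estimate already established. Recall from Proposition~\ref{prop:log} and the sublevel-set analysis that for $z$ outside all the disks $B_{2\sqrt t\,(1+o(1))}(\lambda_j)$ one has $U_t(z) = G(z,u_t^*(z))$ with $u_t^*(z)$ the maximally relevant saddle point; and from the small-$t$ / Rouch\'e analysis in Lemma~\ref{lem:branching} the saddle point $u_t^{j}(z)$ near $\lambda_j$ satisfies $|u_t^j(z)-\lambda_j| \le \sqrt{t\alpha_j/\alpha}\,(1+o(1))$ type bounds. For the non-asymptotic statement I would instead argue by a direct estimate: fix $z$ with $|z-\lambda_j| > 2\sqrt t\sqrt{1+\frac1{2n\alpha}}$ for every $j$, deform $i\R$ to a contour on which $\Re\big(\frac{\alpha n}{2t}(z-u)^2\big) + \sum_j \alpha_j n\log|u-\lambda_j|$ is strictly maximized at a unique saddle and apply a quantitative (Laplace-type) lower bound showing the integral is nonzero; equivalently, argue that $P_t^n$ and $P^n$ have the same number of zeros (counted with multiplicity) in each disk $B_{2\sqrt t\sqrt{1+1/(2n\alpha)}}(\lambda_j)$ by Rouch\'e's theorem applied to the pair $P^n$ and $P_t^n - P^n = (e^{-s\partial_z^2}-\mathrm{id})P^n$, estimating $|P_t^n(u) - P^n(u)|$ on the boundary circle against $|P^n(u)|$ using the explicit series \eqref{eq:def_heat_flow} together with a Cauchy-estimate bound on $\partial_z^{2k}P^n$. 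This last comparison is where the precise radius $2\sqrt t\sqrt{1+\frac{1}{2n\alpha}}$ should drop out.

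Granting the inclusion \eqref{eq:support_mu_nt}, tightness is immediate: all $\mu_{n,t}$ are supported in the fixed compact set $\bigcup_j \overline{B_{2\sqrt t\sqrt{1+1/(2\alpha)}}(\lambda_j)}$, so $(\mu_{n,t})_{n\in\N}$ is tight (indeed uniformly compactly supported). For the last assertion, when $t$ is small enough that the disks $B_{2\sqrt t\sqrt{1+1/(2n\alpha)}}(\lambda_j)$ are pairwise disjoint, the Rouch\'e argument above already shows that the number of zeros of $P_t^n$ inside the $j$-th disk equals the number of zeros of $P^n$ there, namely $n\alpha_j$; dividing by $\alpha n$ gives $\mu_{n,t}(B_{2\sqrt t\sqrt{1+1/(2n\alpha)}}(\lambda_j)) = \alpha_j/\alpha$, and since the disks exhaust $\supp(\mu_{n,t})$ these masses sum to $1$ as they must.

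The main obstacle is making the comparison $|P_t^n(u)-P^n(u)| < |P^n(u)|$ on the boundary circles uniform in $n$ with the stated radius: one must control $|\partial_z^{2k}P^n(u)|$ for all $k$ via Cauchy estimates on a slightly larger circle, sum the resulting series $\sum_k \frac{(t/2\alpha n)^k}{k!}\cdot(\text{Cauchy bound})$, and check that the bound beats $|P^n(u)| = \prod_j|u-\lambda_j|^{n\alpha_j}$ precisely when $|u-\lambda_j| = 2\sqrt t\sqrt{1+\frac{1}{2n\alpha}}$; extracting the sharp constant $2$ (rather than some larger absolute constant) is the delicate point and is presumably where the factor $\sqrt{1+\frac{1}{2n\alpha}}$ enters as the exact correction. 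An alternative that avoids this is to run the Morse-theoretic sublevel-set argument of Lemma~\ref{lem:saddle} uniformly: for $z$ outside the disks the contour can be pushed off to infinity past a saddle, forcing $P_t^n(z)\neq 0$; I would fall back on this if the Rouch\'e constants prove recalcitrant.
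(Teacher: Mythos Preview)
Your proposal does not match the paper's proof, and the Rouch\'e route you sketch has a real gap at exactly the point you flag. The paper does not estimate $|P_t^n-P^n|$ against $|P^n|$ at all; instead it recognizes $P_t^n$ as the additive composition (finite free convolution) of $P^n$ with a rescaled Hermite polynomial,
\[
P_t^n(z)=P^n(z)\boxplus_{\alpha n}\Big(\tfrac{t}{\alpha n}\Big)^{\alpha n/2}\mathrm{He}_{\alpha n}\!\Big(\sqrt{\tfrac{\alpha n}{t}}\,z\Big),
\]
and then invokes a Grace--Walsh--Szeg\H{o} type result (Rahman--Schmeisser, Theorem~5.3.1): every zero of such a composition is of the form $\lambda_j+\xi$ with $\lambda_j$ a zero of $P^n$ and $\xi$ a zero of the Hermite factor. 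The precise radius $2\sqrt t\sqrt{1+\tfrac{1}{2n\alpha}}$ then drops out immediately from Szeg\H{o}'s classical bound $|x|\le\sqrt{2N+1}$ for zeros of $\mathrm{H}_N$, after the rescaling $N=\alpha n$, $s=t/(\alpha n)$. The mass statement for small $t$ is the ``supplement'' to the same theorem, which also counts multiplicities in each disk.

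By contrast, your Cauchy-estimate comparison is unlikely to yield this constant: on $\partial B_r(\lambda_j)$ the perturbation $(e^{-s\partial_z^2}-\mathrm{id})P^n$ is not small relative to $P^n$ in any uniform sense---already for $P^n(z)=z^n$ the largest Hermite zero sits essentially on the circle of radius $2\sqrt t$, so $|P_t^n|$ vanishes there while $|P^n|$ does not, making the inequality $|P_t^n-P^n|<|P^n|$ borderline at best and not amenable to crude derivative bounds. The asymptotic/saddle-point alternative you mention is circular here: Proposition~\ref{prop:log} is an $n\to\infty$ statement and the tightness you are trying to prove is precisely what is needed to pass from it to weak convergence. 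If you want to salvage your approach, the right ingredient to look up is the apolarity theorem for additive polynomial compositions, not a direct Rouch\'e estimate.
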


\begin{proof}
We rely on the results of Section 5.3 in \cite{rahman_schmeisser}. Let
	\begin{align*}
		 \left(\frac{t}{n\alpha}\right)^{n\alpha/2}\mathrm{He}_{n\alpha}\left(\sqrt{\frac{n\alpha}{t}}z\right)=\sum_{m=0}^{n\alpha}a_m\frac{z^m}{m!}\quad\text{and}\quad P^n(z)=\sum_{m=0}^{n\alpha} b_m\frac{z^m}{m!}
	\end{align*}
	such that $\left(\frac{t}{n\alpha}\right)^{n\alpha/2}\mathrm{He}_{n\alpha}\left(\sqrt{\frac{n\alpha}{t}}z\right)=f(z)$ and $P^n(z)=g(z)$ in the setting of \cite[(5.3.2)]{rahman_schmeisser}. Consistent with their notation, we then have
	\begin{align*}
		h(z)=\sum_{m=0}^{n\alpha}b_m f^{(n\alpha-m)}(z)
		=(n\alpha)!\sum_{m=0}^{n\alpha}\frac{b_m}{m!}\left(\frac{t}{n\alpha}\right)^{m/2}\mathrm{He}_{m}\left(\sqrt{\frac{n\alpha}{t}}z\right)
		=(n\alpha)!P_t^n(z),
	\end{align*}
	where we used that $\mathrm{He}_{\alpha n}^{(\ell)}(x)= \frac{(\alpha n)!}{(\alpha n - \ell)!} \mathrm{He}_{\alpha n-\ell}(x)$ and
	\begin{align}\label{eq:Poly_finite_free}
		P_t^n(z)=e^{-\frac s 2 \partial_z^2}P^n(z)=\sum_{m=0}^{\alpha n} \frac{b_m}{m!} e^{-\frac s 2 \partial_z^2}z^m=\sum_{m=0}^{\alpha n} \frac{b_m}{m!} \mathrm{He}_m\left(\frac{z}{\sqrt s}\right)s^{m/2},
	\end{align}
	with $s=\frac{t}{n\alpha}$ in our case.
	By \cite[\S 6.31, p.129]{szego75}, the zeros of the (physicist) Hermite polynomials $\mathrm H_n$ are contained in $[-\sqrt {2n+1},\sqrt {2n+1}]$ and  consequently, the zeros of $\mathrm{He}_{\alpha n}(z/\sqrt s)= 2^{-\alpha  n/2} \mathrm H_{\alpha n}(z/\sqrt{2 s})$ are bounded by $2\sqrt t\sqrt{1+1/(2n\alpha)}$ in absolute value.
In particular, by \cite[Theorem 5.3.1]{rahman_schmeisser}, every zero of the heat-evolved polynomials $P_t^n$ is of the form $\lambda_j +\xi$ for a suitable $j\in \{1,\ldots, d\}$ and some $\xi\in B_{2\sqrt{t}\sqrt{1+\frac{1}{2n\alpha}}}(0)$. This proves~\eqref{eq:support_mu_nt}. Now, let $t>0$ be sufficiently small such that the union in \eqref{eq:support_mu_nt} is disjoint. Then, the  supplement of \cite[Theorem 5.3.1]{rahman_schmeisser} yields that the number of zeros of $P_t^n$ in each disk $B_{2\sqrt t \sqrt{1+\frac{1}{2n\alpha}}} (\lambda_j)$ is exactly $\alpha_j n$ and it follows that the measure $\mu_{n,t}$ assigns the value $\alpha_j /\alpha$ to this disk.
	\end{proof}
	
\begin{remark} 
Let us provide some additional comments.
\begin{enumerate}
\item In the language of finite free probability (see \cite{MSS15} for more information) we rephrased the heat-evolved polynomial in \eqref{eq:Poly_finite_free} as the finite free convolution
$$ P_t^n(z)= P^n(z)\boxplus_{\alpha n} s^{\alpha n/2}\mathrm{He}_{\alpha n} (z/\sqrt s).$$
Usually, this bilinear operation for polynomials is only studied  for \emph{real-rooted} polynomials $P$, for which the first statement of Lemma \ref{lem:P_t^n roots} can be found in \cite[Theorem 1.3]{MSS15}. Interestingly, real-rootedness is not required in Lemma~\ref{lem:P_t^n roots}.
\item  In Corollary \ref{cor:small_t} below, we will see that the bound on the support of $\mu_{n,t}$ is not optimal. We expect that the measure $\mu_t$ is supported inside the union of balls  of radii  $2\sqrt t\frac{\alpha_j}{\alpha}$ centered at $\lambda_j$.
\item Lemma \ref{lem:P_t^n roots} shows tightness of the sequence $(\mu_{n,t})_{n\in \N}$ with quantitative bounds on the support. One can show that mere qualitative tightness also follows from the following upper bound on the logarithmic potential combined with a Markov inequality. We omit the details of this implication.
\end{enumerate}
\end{remark}
	
\begin{lemma}\label{lem:upperbound}
For any $r,t>0$, there is a constant $c>0$ such that
\begin{align*}
U_{n,t}(z)=\frac {1}  {\alpha n} \log \left\lvert \eee^{-\frac{t}{2\alpha n}\partial_z^2}P^n(z)\right\rvert <c
\end{align*}
uniformly in $n\in\N$ and $|z|\le r\sqrt t$. The bound $c$ depends on $t$ as $c=\frac {\alpha} {2} \max(0,\log t)+ \tilde c$ for some $\tilde c>0$, which is independent of $t$ and only depends on the data $(\lambda_j)_j, (\alpha_j)_j$ of the polynomial $P$.
\end{lemma}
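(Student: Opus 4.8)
The plan is to argue directly from the contour-integral representation of Lemma~\ref{lem:int}, estimating crudely by bounding the integral along $i\R$ by the integral of its modulus; this is the route that does not presuppose the quantitative support bound, and hence gives a genuinely independent handle on tightness. Writing $z=x+iv$ with $x,v\in\R$, one has $|P_t^n(z)|\le\sqrt{\tfrac{\alpha n}{2\pi t}}\int_\R e^{\alpha n G(z,iy)}\,dy$, where on the imaginary axis
\begin{align*}
G(z,iy)=\frac1\alpha\sum_{j=1}^d\alpha_j\log|iy-\lambda_j|+\frac{x^2-(v-y)^2}{2t}.
\end{align*}
Set $R:=1+\max_j|\lambda_j|$. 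Using $\sum_j\alpha_j/\alpha=1$ and $|iy-\lambda_j|\le|y|+R$, together with the hypothesis $x^2\le|z|^2\le r^2t$, we get $G(z,iy)\le\log(|y|+R)+\tfrac{r^2}{2}-\tfrac{(v-y)^2}{2t}$, hence
\begin{align*}
|P_t^n(z)|\le e^{\alpha n r^2/2}\sqrt{\tfrac{\alpha n}{2\pi t}}\int_\R(|y|+R)^{\alpha n}e^{-\frac{\alpha n(v-y)^2}{2t}}\,dy.
\end{align*}

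The remaining integral is a polynomial against a Gaussian and is evaluated in closed form: after the shift $w=y-v$ (so $|v|\le r\sqrt t$) and the splitting $(|w|+A)^{\alpha n}\le(2A)^{\alpha n}+(2|w|)^{\alpha n}$ with $A:=|v|+R$, the two elementary identities $\int_\R e^{-\alpha n w^2/2t}\,dw=\sqrt{2\pi t/\alpha n}$ and $\int_\R|w|^{\alpha n}e^{-\alpha n w^2/2t}\,dw=\Gamma\!\bigl(\tfrac{\alpha n+1}{2}\bigr)\bigl(\tfrac{2t}{\alpha n}\bigr)^{(\alpha n+1)/2}$ produce, after absorbing the prefactor (which cancels cleanly, since $\sqrt{\tfrac{\alpha n}{2\pi t}}\bigl(\tfrac{2t}{\alpha n}\bigr)^{1/2}=\tfrac1{\sqrt\pi}$), an explicit bound of the shape
\begin{align*}
|P_t^n(z)|\le e^{\alpha n r^2/2}\Bigl[\bigl(2(r\sqrt t+R)\bigr)^{\alpha n}+\tfrac{2^{\alpha n}}{\sqrt\pi}\bigl(\tfrac{2t}{\alpha n}\bigr)^{\alpha n/2}\Gamma\!\bigl(\tfrac{\alpha n+1}{2}\bigr)\Bigr].
\end{align*}
It then remains to take $\tfrac1{\alpha n}\log$ and read off the constant. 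By $\log(a+b)\le\log2+\max(\log a,\log b)$, the factor $e^{\alpha n r^2/2}$ contributes $\tfrac{r^2}{2}$, while the first bracketed term contributes $\log\bigl(2(r\sqrt t+R)\bigr)$, which, distinguishing $t\ge1$ from $t<1$, is at most $\tfrac12\max(0,\log t)+\log\bigl(2(r+R)\bigr)$. For the second term one applies the elementary inequality $\log\Gamma(y)\le y\log y$, valid for $y\ge1$ and in particular for $y=\tfrac{\alpha n+1}{2}$; after the cancellation $\bigl(\tfrac{2t}{\alpha n}\bigr)^{\alpha n/2}\bigl(\tfrac{\alpha n}{2}\bigr)^{\alpha n/2}=t^{\alpha n/2}$ and the estimate $(1+\tfrac1{\alpha n})^{\alpha n/2}\le e^{1/2}$, this term contributes at most $\tfrac12\log t+C$ with $C$ a numerical constant uniform in $n\ge1$. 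Since $\tfrac12\log t\le\tfrac12\max(0,\log t)$, assembling the pieces gives $U_{n,t}(z)<\tfrac12\max(0,\log t)+\tilde c$, and a fortiori $<\tfrac{\alpha}{2}\max(0,\log t)+\tilde c$ (as $\alpha\ge1$), with $\tilde c$ depending only on $r$ and on the data $(\lambda_j)_j,(\alpha_j)_j$ of $P$ (through $R$ and $\alpha$) and not on $t$.

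There is no deep difficulty here: the only point requiring care is uniformity in $n$, which is why one uses the non-asymptotic bound $\log\Gamma(y)\le y\log y$ rather than $\log\Gamma(y)\sim y\log y-y$, and the bookkeeping needed to isolate the $\tfrac12\log t$ behaviour for large $t$ while keeping $\tilde c$ genuinely $t$-independent (hence the split $t\ge1$ versus $t<1$). I remark that a much shorter argument is available via Lemma~\ref{lem:P_t^n roots}: since $P^n$ is monic of degree $\alpha n$, so is $P_t^n$, whence $\mu_{n,t}$ is a probability measure and $U_{n,t}(z)=\int_\C\log|z-\xi|\,d\mu_{n,t}(\xi)$; as every zero $\xi$ of $P_t^n$ lies within distance $2\sqrt t\sqrt{1+\tfrac1{2n\alpha}}\le3\sqrt t$ of some $\lambda_j$, we get $|z-\xi|\le(r+3)\sqrt t+\max_j|\lambda_j|$ for $|z|\le r\sqrt t$, and averaging $\log|z-\xi|$ over $\mu_{n,t}$ yields $U_{n,t}(z)\le\log\bigl((r+3)\sqrt t+\max_j|\lambda_j|\bigr)\le\tfrac12\max(0,\log t)+\tilde c$ at once. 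This, however, leans on the quantitative support bound, whereas the purpose of Lemma~\ref{lem:upperbound} is to supply a soft, self-contained route to tightness of $(\mu_{n,t})_n$.
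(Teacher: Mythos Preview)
Your proof is correct and takes a genuinely different route from the paper. The paper expands $P^n(z)$ into monomials via the multinomial theorem, applies the heat flow to each $z^k$ (producing a rescaled Hermite polynomial), and then invokes an externally cited pointwise bound $|\mathrm{He}_k(x)|\le\sqrt{k!}\,e^{\sqrt k|x|}$ to control each term. You instead work directly with the contour-integral representation of Lemma~\ref{lem:int} and bound the Gaussian integral along $i\R$ crudely by its modulus, reducing to a closed-form polynomial-against-Gaussian moment. Your approach is more self-contained (no external Hermite estimate) and arguably more natural within the paper's own framework, since the integral representation is already the engine of the saddle-point analysis. It also yields the sharper constant $\tfrac12\max(0,\log t)$ in place of the paper's $\tfrac\alpha2\max(0,\log t)$. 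Your closing remark about the short route via Lemma~\ref{lem:P_t^n roots} is apt and matches the paper's own observation that qualitative tightness is also available from the upper bound combined with a Markov-type argument; the paper keeps both lemmas precisely because it later needs the upper bound on $U_{n,t}$ (not just tightness) in the proof of Theorem~\ref{thm:semicircle}, where $t\to\infty$ and the explicit $t$-dependence of $c$ matters.
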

In the following, we shall apply this lemma for fixed $t>0$, however, we will also need the large-$t$ case later.

\begin{proof}
For $|z|\le r\sqrt t$ and any $k\le \alpha n$, we use the elementary bound of Hermite polynomials (see for instance \cite[Equation (1.2)]{HermiteBound})
\begin{align*}
\left| \exp\big(-\tfrac{t}{2\alpha n} \partial_z^2\big) z^k\right| =  \left|\tfrac t{\alpha n} \right|^{k/2} \left|\mathrm{He}_k\big(\sqrt{\tfrac {\alpha n} t }z\big)\right|\le \left|\tfrac {t} {\alpha n} \right|^{k/2}\sqrt{k!}\exp\big( \sqrt{\tfrac {\alpha nk} {t}} |z|\big)\le e^{c_0 n} \max(t,1)^{\alpha n/2}
\end{align*}
for some $c_0>0$.
Expanding $P^n(z)$, denoting $k=\sum_{j=1}^d i_j$ and using Stirling's approximation for the binomial coefficient, we obtain the claimed bound
\begin{align*}
\left| e^{-\frac t {2\alpha n}  \partial_z^2}  P^n(z)\right|
& =\left| \sum_{i_1=1}^{n\alpha_1}\dots \sum_{i_d=1}^{n\alpha_d} \exp\big(-\tfrac{t}{2\alpha n} \partial_z^2\big) z^k\prod_{j=1}^d \binom{n\alpha_j}{i_j}(-\lambda_j)^{n\alpha_j-i_j}\right|\\
 &\le \sum_{i_1=1}^{n\alpha_1}\dots \sum_{i_d=1}^{n\alpha_d} e^{c_0 n}\max(t,1)^{\alpha n/2} e^{c_1 n}\le e^{c n}
\end{align*}
for some $c_0,c_1>0$ depending on $(\lambda_j)_j, (\alpha_j)_j$  as claimed.
\end{proof}
We are now ready for the
\begin{proof}[Proof of Theorem \ref{thm:measure}]
First, let $t>0$.
By tightness as seen in Lemma \ref{lem:P_t^n roots} and Prokhorov's Theorem, it suffices to show vague convergence of $(\mu_{n,t})_{n\in \N}$.  Let $\varphi\in\mathcal C_c^\infty$ be a test function and $\leb$ be the Lebesgue measure on $\C$. We will argue that each subsequence of $\int U_{n,t} \varphi \dint \leb $ contains a further subsequence converging to $\int U_t \varphi \dint \leb$, which implies convergence of the whole sequence $\int U_{n,t} \varphi \dint \leb $. Let $U_{n_k,t}$ be an arbitrary subsequence.
By \cite[Theorem 4.1.9]{hormander1}, any locally upper-bounded sequence of subharmonic functions is either converging to $-\infty$ a.s.\  or contains a further subsequence converging in $L^1_{\mathrm{loc}}$ (thus in the distributional sense). The former is excluded by pointwise convergence (Proposition~\ref{prop:log}), hence $U_{n_k,t}$ contains a further subsequence $U_{\tilde n_k,t}$ converging in $L^1_{\mathrm{loc}}$ and
\begin{align}\label{eq:L1loc}
\int U_{\tilde n_k,t}(z) \varphi(z) \dint \leb(z) \to \int U_t(z) \varphi(z) \dint \leb(z) .
\end{align}
Therefore, we conclude that the whole sequence  $\int U_{n,t} \varphi \dint \leb$ converges to  $\int U_t \varphi \dint \leb$. Replacing $\varphi$ by $\Delta \varphi$ shows that $\int U_{n,t} \varphi \dint \leb$ converges to  $\int U_t \varphi \dint \leb$. The definition  of the Laplacian $\Delta$ on the space of distributions  implies
\begin{align*}
\int_{\C} \varphi d\mu_{n,t}
=
\int_{\C} \Delta \varphi(z) \frac {U_{n,t}(z)} {2\pi }  \dint \leb(z) \ton
\int_{\C}\Delta\varphi (z) \frac {U_{t}(z)} {2\pi} \dint \leb (z)
=
\int_{\C} \varphi d\mu_{t}
\end{align*}
as $n\to\infty$. So, $\mu_{n,t}$ converges vaguely (hence by tightness also weakly) to $\mu_t \coloneq \frac 1 {2\pi} \Delta U_t$, which has compact support by Lemma \ref{lem:P_t^n roots}.

Finally, let $t=|t|e^{i\theta}\in\C$ and define $\tilde P^n(z)=P^n(e^{i\theta/2}z)$, the rotated polynomial. It follows from the chain rule that $t\partial_z^2=|t|\partial_{\tilde z}^2$ for $\tilde z=e^{i\theta/2}z$ and then from \eqref{eq:def_heat_flow} that $P^n_t(z)=\big(e^{-\frac{|t|}{2\alpha n}\partial_z^2}\tilde P^n\big)(e^{-i\theta/2}z)$. Denoting the maximally relevant solution to the saddle point equation \eqref{eq:sp_eqn_2} for $\tilde P^n$ by $\tilde u_{|t|}^*$, we obtain
\begin{align*}
U_{t}(z)&=\frac 1 {\alpha n}\log |\tilde P^n (\tilde u_{|t|}^*(e^{-i\theta/2}z))|+\Re\left(\frac{(e^{-i\theta/2}z-\tilde u_{|t|}^*(e^{-i\theta/2}z))^2}{2|t|}\right)\\
&=\frac{1}{\alpha n}\log |P^n(u_t^*(z))|+\Re\left(\frac{(z-u_t^*(z))^2}{2t}\right),
\end{align*}
from Proposition \ref{prop:log}, where we now finally defined $u_t^*(z):=e^{i\theta/2}\tilde u_{|t|}^*(e^{-i\theta/2} z)$ for $t\in\C$. This saddle point $u_t^*(z)$ already illustrates how rotation of $t$, $z$, and $\lambda_j$ interact and, consistently, it solves
\begin{align}\label{eq:sp_eqn_complex}
\frac{t}{\alpha}\sum_{j=1}^d\frac{\alpha_j}{u-\lambda_j}=z-u, \qquad t,z\in\C.
\end{align}
Moreover, by Lemma \ref{lem:analyt} it depends analytically on $t\in\C$.
 The claimed push-forward follows from applying the Laplacian
$$ \mu_t=\frac 1 {2\pi}\Delta U_t=\frac{e^{-i\theta}}{2\pi}\Delta \tilde U_{|t|}(e^{-i\theta/2}\cdot)=\tilde\mu_{|t|}(e^{-i\theta/2}\cdot).$$
\end{proof}

\section{Description of the limiting distribution}
In this section, we will present the proofs of all results describing the properties of $\mu_t$. In the following, we will need to identify regions where branches of maximally relevant saddle points switch. We begin with the following auxiliary lemma.

\begin{lemma}\label{lem:sp_nbh}
	Suppose that $D\subset\C\setminus B$ is a simply connected domain  -- then the saddle points $u_t^j(z)$, $j=1,\dots,d+1$, are well-defined analytic functions on $D$ -- and let $z_0\in D\cap\mathcal D_t$, where $u_t^k(z_0)=u_t^*(z_0)$ is the unique maximally relevant saddle point corresponding to $z_0$. Further, assume that $G(z,u_t^k(z))\neq G(z,u_t^j(z))$ for all $z\in D$ and $j\neq k$.

Then, for every $z\in D$, $u_t^k(z)=u_t^*(z)$ is the unique maximally relevant saddle point.
\end{lemma}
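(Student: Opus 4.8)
The plan is to propagate the property ``$u_t^k(z)$ is the maximally relevant saddle point'' from the single point $z_0$ to all of $D$ by a connectedness argument, using the sublevel‑set characterization of $u_t^*$ in Lemma~\ref{lem:saddle}(3). First I would record two structural facts. By Lemma~\ref{lem:analyt} the heights $h_j(z):=G(z,u_t^j(z))$ are harmonic, hence continuous, on the simply connected domain $D$, and by hypothesis $h_k-h_j$ is continuous and nowhere zero on $D$ for each $j\neq k$; since $D$ is connected, the sign of $h_k-h_j$ is constant on $D$, so the index sets $J_-:=\{j\neq k: h_j(z)<h_k(z)\}$ and $J_+:=\{j\neq k: h_j(z)>h_k(z)\}$ do not depend on $z\in D$. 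Next, recalling the Morse‑theoretic description of the sublevel sets $\Sigma(h)=\{u:G(z,u)\le h\}$ from the proof of Lemma~\ref{lem:saddle} --- $d+2$ simply connected components for $h\ll 0$, a decrease of the component count each time $h$ crosses a critical value, no loops ever created --- the topology of $\Sigma(h)$ is locally constant in $h$ away from the finitely many critical values, so the threshold
\[
	h_\star(z):=\inf\{h\in\R:\ +i\infty\text{ and }-i\infty\text{ lie in the same component of }\Sigma(h)\}
\]
is well defined, finite, and belongs to $\{h_1(z),\dots,h_{d+1}(z)\}$ for every $z\in D$; and by Lemma~\ref{lem:saddle}(3), for $z\in\mathcal D_t$ the unique saddle whose height equals $h_\star(z)$ is $u_t^*(z)$. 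In particular $h_\star(z_0)=h_k(z_0)$.

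The core of the argument is then to show $h_\star(z)=h_k(z)$ for all $z\in D$, which I would deduce from continuity of $h_\star$ on $D$. Granting that, put $\phi:=h_\star-h_k$, a continuous function on $D$; aside from the index $j=k$, which contributes the value $0$, the only candidates for $\phi(z)$ are $h_j(z)-h_k(z)$ with $j\neq k$, each of constant nonzero sign on $D$. A pigeonhole argument then shows $\{\phi=0\}$ is open: if $z_m\to z_1$ in $D$ with $\phi(z_1)=0$ but $\phi(z_m)\neq 0$, then $h_\star(z_m)=h_{j}(z_m)$ for a fixed $j\neq k$ along a subsequence, so $\phi(z_m)=h_j(z_m)-h_k(z_m)\to h_j(z_1)-h_k(z_1)\neq 0$, contradicting $\phi(z_m)\to\phi(z_1)=0$; closedness of $\{\phi=0\}$ is immediate from continuity. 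Since $z_0\in\{\phi=0\}$ and $D$ is connected, $\{\phi=0\}=D$, i.e.\ $h_\star\equiv h_k$ on $D$. Finally, for every $z\in D$ the equality $h_\star(z)=h_k(z)$ together with $h_k(z)\neq h_j(z)$ for $j\neq k$ says that this threshold is a simple critical value of $G(z,\cdot)$, attained only at $u_t^k(z)$; hence $u_t^k(z)$ is precisely the saddle at which the components of $\Sigma(h)$ containing $+i\infty$ and $-i\infty$ first merge, which by Lemma~\ref{lem:saddle} is the defining property of $u_t^*(z)$. Thus $u_t^k(z)=u_t^*(z)$ is the unique maximally relevant saddle point for every $z\in D$.

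I expect the remaining ingredient --- continuity of $h_\star$ on $D$ --- to be the main obstacle, since it is the only step where genuine analysis rather than bookkeeping enters. The plan there is to show that, for $z$ ranging over a compact neighborhood of any fixed $z_1\in D$ and $h$ in a bounded range, the confinement estimates \eqref{eq:G_asymptotics_Re}--\eqref{eq:G_asymptotics} and the convexity bound \eqref{eq:convexity} from the proof of Lemma~\ref{lem:saddle} confine $\Sigma(h)$ to a fixed compact set together with two slender unbounded pieces accumulating only at $\pm i\infty$; consequently, whether $+i\infty$ and $-i\infty$ lie in the same component of $\Sigma(h)$ is decided on a fixed compact surface on which $(z,u)\mapsto G(z,u)$ depends continuously, indeed real‑analytically in $z$. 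Since on $D$ the critical value $h_k(z)$ is separated from all other critical values of $G(z,\cdot)$ (by the constancy of $J_\pm$), a standard Morse‑stability argument --- a path joining $\pm i\infty$ inside $\Sigma(h)$ at $z_1$ persists inside a slightly enlarged sublevel set at nearby $z$, while nonexistence of such a path is an open condition --- then shows that $h_\star$ cannot jump near $z_1$, giving the required continuity.
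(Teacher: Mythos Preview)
Your proposal is correct and follows essentially the same approach as the paper: both hinge on the sublevel-set characterization of Lemma~\ref{lem:saddle}(3) and establish that the property ``$u_t^k$ is maximally relevant'' is stable under small perturbations of $z$ by sandwiching $\Sigma_w(h)$ between $\Sigma_z(h\pm\varepsilon)$ via uniform continuity of $G$ on a fixed compact region in $u$. The only organizational difference is that you package the local stability as continuity of the threshold function $h_\star$ and then run a global open--closed argument on $D$, whereas the paper propagates maximal relevance directly along a path $\beta$ from $z_0$ to an arbitrary $z_1$; the analytic content is identical, and your pigeonhole step (using that each $h_j-h_k$ has constant sign on $D$) is a clean way to exclude jumps of $h_\star$ between branches.
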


Note that we could also assume the stronger condition $G(z,u_t^k(z))\neq G(z,u_t^j(z))$ for all $i\neq j$ as was done in \cite[Lemma 4.7]{BHS24}. However, the weaker assumption of Lemma \ref{lem:sp_nbh} allows for domains $D$ such that  $\Gamma_{i,j}^D=\{z\in D: G(z,u_t^i(z))=G(z,u_t^j(z))\}\neq \emptyset$  corresponding to two saddle points $u_t^i,u_t^j$ of lower height than $u_t^k$ (or, higher irrelevant ones).

\begin{proof}
	Note that the saddle point $u_t^k(z)$ is continuous in $z\in D$, and $G(z,u)$ is continuous in $z\in D$ and $u\in \C\setminus \{\lambda_1,\dots,\lambda_d\}$.
	Take two points $z_0,z_1\in D$, where $u_t^k(z_0)=u_t^*(z_0)$. Due to simple connectedness, there exists a path $\beta$ from $z_0$ to $z_1$ and a neighborhood of $\beta$ on which  $u_t^k$ is analytic. By continuity, and our assumption that $G(z,u_t^k(z))\neq G(z,u_t^j(z))$ for all $z\in D$ and $j\neq k$, there exists $\varepsilon>0$ such that $|G(z,u_t^k(z))-G(z,u_t^j(z))|>\varepsilon$ for all $z\in \beta$. Let $z\in\beta$ such that $u_t^k(z)$ is maximally relevant. By Lemma \ref{lem:saddle} (3), there is no path $\gamma$ from $-i\infty$ to $+i\infty$ through $\Sigma( h_0 -\varepsilon/2)$ for $h_0=G(z,u_t^k(z))$, while there exists such a path through $\Sigma(h_0 +\varepsilon/2)$. Moreover, as explained in the proof of Lemma \ref{lem:saddle}, no other connected components of sublevel sets $\Sigma(h)$ merge within the range of heights $h\in [h_0-\varepsilon/2 , h_0 +\varepsilon/2]$. We claim that by continuity in $z$, there is a neighborhood $B_\delta (z)$ such that the same holds for all $w\in B_\delta (z)$ and maximally relevance is stable under perturbation of $z$.
	 
	Let $M\subseteq \C$ be compact as defined in \eqref{eq:M}, such that the sublevel-set-mapping $w \mapsto M\cap \Sigma_w(h)$, for $\Sigma_w(h)\coloneq  \{u\in \mathbb C : G(w,u) \le h\}$, has compact values (actually, it is a Hausdorff continuous set-valued mapping, see \cite[Thm~5.7]{RockaWets} and \cite[Cor~5.21]{RockaWets}). By uniform continuity of $(w,u)\mapsto G(w,u)$ on a compact set $W\times M$ where $z\in W$, there exist some $\delta>0$ such that for all $w\in B_\delta(z)$ we have $\sup_{u\in M} |G(w,u)-G(z,u)|<\varepsilon/4$. Hence, $$\Sigma_z(h-\varepsilon/4)\cap M\subseteq \Sigma_w(h)\cap M\subseteq \Sigma_z(h+\varepsilon/4)\cap M$$
	 for all $h\in\R$.
	In particular, there is no path $\gamma$ through sublevel sets $\Sigma_w(h_0-\varepsilon/4)$, while there is one through $\Sigma_w(h_0+\varepsilon/4)$. By Lemma \ref{lem:saddle} (3) again, it follows that if $u_t^k(z)$ is maximally relevant for some $z\in \beta$, then $u_t^k(w)$ stays maximally relevant for all $w\in N$. Thus, being maximally relevant is preserved along $z\in \beta$ as claimed.
\end{proof}

\subsection{Proof of Theorem \ref{thm:description}}
\begin{proof}[Proof of Theorem \ref{thm:description}]

\emph{(2)} The formula $U_t(z)=G(z,u_t^*(z))$, $z\in \mathcal D_t$, was proven in Section \ref{sec:sp_method} via the saddle point method.

\emph{(1)} Let $z\in \mathcal{D}_t$. Recall that, in Lemma \ref{lem:analyt}, we used the implicit function theorem to show  that $u_t^j(z)$, $j=1,\dots,d+1$, are analytic in a small disk around $z$. By Lemma \ref{lem:sp_nbh}, the branch of $u_t^*(z)$ does not switch locally around $z$, i.e.\ $u_t^*(z) = u_t^k(z)$ for some fixed $k$. Therefore, we can take the derivative of $u_t^*(z)$ in a small disk around $z$. By definition of the Stieltjes transform, we have $m_t(z)=2\partial_z U_t(z)$.
The complex chain rule for $\partial_z$ applied to the function $g$ yields
\begin{align*}
\partial_z (g(z,u^*_t(z)))=(\partial_u g)(z, u^*_t(z)) \partial_z u^*_t(z)+(\partial_z g)(z,u^*_t(z))=(\partial_z g)(z,u^*_t(z))
\end{align*}
since $u_t^*(z)$ is a saddle point of $g$. Hence,
\begin{align*}
m_t(z)=2\partial_z U_t(z) =\partial_z \Bigl(g(z,u^*_t(z)) + \overline{g(z,u^*_t(z))}\Bigr) =\partial_z \Bigl(g(z,u^*_t(z))\Bigr)=(\partial_z g)(z, u^*_t(z))=\frac{z-u^*_t(z)}{t}.
\end{align*}
Since $u=u_t^*(z)$ solves \eqref{eq:sp_eqn_2}, i.e.\
$\frac{t}{\alpha}\sum_{j=1}^d\frac{\alpha_j}{u-\lambda_j}=z-u$, we obtain
\begin{align*}
u_t^*(z)=z-t m_t(z)\quad\text{and}\quad t m_t(z)=\frac{t}{\alpha}\sum_{j=1}^d\frac{\alpha_j}{u_t^*(z)-\lambda_j},
\end{align*}
and, so,
\begin{align*}
 \alpha m_t(z)=(\partial_z\log P)(z-tm_t(z))=\sum_{j=1}^d\,\frac{\alpha_j}{z-tm_t(z)-\lambda_j}.
\end{align*}

\emph{(3)} Recall that the logarithmic potential of $\mu_t$ is given by $U_t(z)=\Re g(z,u_t^*(z))$, $z\in \mathcal D_t$, and that $g(z,u_t^*(z))$ is locally analytic on $\mathcal D_t$. It therefore follows that $U_t(z)$ is harmonic  on $\mathcal D_t$ and so, $\mu_t=\frac  1 {2\pi}\Delta U_t=0$ on $\mathcal{D}_t$. 
This implies that $\mu_t$ is supported on $\mathcal{D}_t^c$ and compactly supported by Lemma \ref{lem:P_t^n roots}. Therefore, by Lemma \ref{lem:domain}, $\supp(\mu_t)$ is contained in the union of finitely many smooth curves as claimed.
\end{proof}

With the following corollary, we can show that the measure $\mu_t$ is supported only on those curves along which the branch of the maximally relevant saddle point switches. Further, we compute the density of $\mu_t$ w.r.t.\ the length measure on these curves.

\begin{corollary}\label{cor:density}
	Let $D\subset \C\setminus B$ be a simply connected domain such that $\Gamma_{i,j}^D:=\{z\in D:G(z,u_t^i(z))=G(z,u_t^j(z))\}\neq \emptyset$ for exactly one pair $1\leq i <j \leq d+1$  and $\Gamma_{i,j}^D$  is a real analytic curve. For simplicity, let us assume that $\Gamma_{i,j}^D$ separates $D=D_i\cup D_j\cup \Gamma_{i,j}^D$ into two simply connected open domains $D_i,D_j$. Then, $\mu_t(\Gamma_{i,j}^D)>0$ if and only if the branch of the maximally relevant saddle point switches at $\Gamma_{i,j}^D$, which means that  $u_t^*(z) = u_t^i(z)$ for $z\in D_i$ and $u_t^*(z) = u_t^j(z)$ for $z\in D_j$. In this case, the density $\varrho:\Gamma_{i,j}^D\to\R$ of $\mu_t$ (restricted to $\Gamma_{i,j}^D$) with respect to the length measure on $\Gamma_{i,j}^D$ is given by
	$$\varrho(z)=\frac {|u_t^i(z)-u_t^j(z)|} {2\pi t}.$$ 
\end{corollary}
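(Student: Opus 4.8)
The plan is to make $U_t$ explicit on $D$, observe that it is piecewise harmonic across the real-analytic arc $\Gamma_{i,j}^D$, and extract $\mu_t=\tfrac1{2\pi}\Delta U_t$ as the jump of the normal derivative of $U_t$ across that arc. First I would pin down the branch structure. Since $D\subseteq\C\setminus B$ is simply connected, each $u_t^m$, $m=1,\dots,d+1$, is a single-valued analytic function on all of $D$ by Lemma~\ref{lem:analyt}, so each $G^m:=G(\,\cdot\,,u_t^m(\,\cdot\,))=\Re\,g(\,\cdot\,,u_t^m(\,\cdot\,))$ is real-analytic and harmonic on $D$. On the open sets $D_i$ and $D_j$ the heights $G^1,\dots,G^{d+1}$ are pairwise distinct, because by hypothesis the only coincidence inside $D$ happens along $\Gamma_{i,j}^D$, which misses $D_i\cup D_j$; hence $D_i,D_j\subseteq\mathcal D_t$ and, by Lemma~\ref{lem:sp_nbh}, the maximally relevant branch is constant on each of them, say $u_t^*\equiv u_t^a$ on $D_i$ and $u_t^*\equiv u_t^b$ on $D_j$. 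Theorem~\ref{thm:description}(2) then gives $U_t=G^a$ on $D_i$ and $U_t=G^b$ on $D_j$, so that, as an $L^1_{\mathrm{loc}}$ function on $D$ (equivalently as a distribution, since $\Gamma_{i,j}^D$ is $\leb$-null), $U_t=\mathbf 1_{D_i}G^a+\mathbf 1_{D_j}G^b$.

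Next I would compute $\langle\Delta U_t,\varphi\rangle=\int_D U_t\,\Delta\varphi\,\dint\leb$ for $\varphi\in C_c^\infty(D)$ by Green's second identity on $D_i$ and $D_j$ separately. As $\varphi$ is compactly supported in $D$, only the portions of $\partial D_i$ and $\partial D_j$ lying on $\Gamma_{i,j}^D$ contribute, and $\Delta G^a=\Delta G^b=0$, so one gets
\begin{align*}
2\pi\,\langle\mu_t,\varphi\rangle=\int_{\Gamma_{i,j}^D}\big(G^a-G^b\big)\,\partial_n\varphi\,\dint\ell-\int_{\Gamma_{i,j}^D}\varphi\,\big(\partial_n G^a-\partial_n G^b\big)\,\dint\ell,
\end{align*}
with $\ell$ arclength and $n$ the unit normal of $\Gamma_{i,j}^D$ pointing from $D_i$ into $D_j$. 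Since $\mu_t$ is a positive measure, the left-hand side is a functional of order $0$ in $\varphi$, whereas the first integral on the right has order $1$ unless $G^a\equiv G^b$ on $\Gamma_{i,j}^D$; testing against functions that oscillate across $\Gamma_{i,j}^D$ with bounded amplitude but large normal derivative forces $G^a\equiv G^b$ on $\Gamma_{i,j}^D$. If $a=b$, the whole right-hand side vanishes, so $\mu_t|_D=0$ and $\mu_t(\Gamma_{i,j}^D)=0$ (no branch switch). If $a\ne b$, then $G^a\equiv G^b$ on $\Gamma_{i,j}^D$ together with the ``exactly one pair'' hypothesis forces $\{a,b\}=\{i,j\}$, i.e.\ the maximally relevant branch does switch across $\Gamma_{i,j}^D$; this proves the asserted equivalence. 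In the switching case (labelling so that $a=i$, $b=j$) we are left with $2\pi\langle\mu_t,\varphi\rangle=-\int_{\Gamma_{i,j}^D}\varphi\,(\partial_n G^i-\partial_n G^j)\,\dint\ell$, so the density of $\mu_t$ along $\Gamma_{i,j}^D$ is $\varrho=\tfrac1{2\pi}(\partial_n G^j-\partial_n G^i)$.

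Finally I would evaluate the jump. Exactly as in the proof of Theorem~\ref{thm:description}(1), $z\mapsto g(z,u_t^m(z))$ is holomorphic on $D$ with $\partial_z g(z,u_t^m(z))=(z-u_t^m(z))/t$ (the chain-rule term dies because $u_t^m$ is a saddle point), so the Euclidean gradient of $G^m=\Re\,g(\,\cdot\,,u_t^m)$, viewed as a complex number, is $\nabla G^m(z)=\overline{(z-u_t^m(z))/t}$. Hence $\nabla(G^j-G^i)(z)=\overline{u_t^i(z)-u_t^j(z)}/t$ (using $t>0$), which is orthogonal to $\Gamma_{i,j}^D$ since $G^j-G^i$ vanishes there, and therefore parallel to $n$; thus $\partial_n G^j-\partial_n G^i=\pm|u_t^i(z)-u_t^j(z)|/t$. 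Because $t>0$ and $z\in\C\setminus B$ give $|u_t^i(z)-u_t^j(z)|>0$, positivity of $\mu_t$ selects the $+$ sign, and $\varrho(z)=|u_t^i(z)-u_t^j(z)|/(2\pi t)>0$, which also re-confirms $\mu_t(\Gamma_{i,j}^D)>0$.

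The step I expect to need the most care is the distributional bookkeeping at $\Gamma_{i,j}^D$: justifying the integration by parts with only one-sided real-analytic regularity of the representative $\mathbf 1_{D_i}G^a+\mathbf 1_{D_j}G^b$ of $U_t$, and exploiting positivity of $\mu_t$ twice — once to kill the order-one boundary term (equivalently, to force $\{a,b\}=\{i,j\}$ when $a\ne b$), and once to fix the sign of the density. Everything else is the textbook ``jump of the normal derivative'' formula for a function that is piecewise harmonic across a smooth curve, combined with the explicit expression for $\nabla G^m$.
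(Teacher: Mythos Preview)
Your argument is correct and morally the same as the paper's, but the execution differs in two places worth noting. First, where you run Green's identity on $U_t$ to extract the jump of the normal derivative, the paper instead works with the Stieltjes transform $m_t=2\partial_z U_t$ and invokes a Sokhotski--Plemelj-type formula (citing \cite[Lemma~2]{borcea}) to read off the density directly as $\tfrac1{2\pi}$ times the modulus of the jump of $m_t$ across the arc; your computation of $\nabla G^m=\overline{(z-u_t^m(z))/t}$ is exactly what underlies that jump, so the two routes coincide at the end. Second, you are more careful about the dichotomy than the paper: you use positivity of $\mu_t$ (order zero of the distribution) to kill the term $\int(G^a-G^b)\partial_n\varphi$ and hence force $\{a,b\}\subseteq\{i,j\}$, whereas the paper leaves this implicit. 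A shorter alternative for that step is to use Lemma~\ref{lem:sp_nbh} on all of $D$: if $a\notin\{i,j\}$ then by the ``exactly one pair'' hypothesis $G^a\neq G^m$ for every $m\neq a$ throughout $D$, so the lemma forces $b=a$; hence $a\neq b$ already implies $\{a,b\}=\{i,j\}$ without invoking positivity.
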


For every $z\in\Gamma_{i,j}^D$, we have $G(z,u_t^i(z))=G(z,u_t^j(z))$ by definition. It follows that we can extend $U_t(z)$ continuously to $\Gamma_{i,j}^D$ by defining $U_t(z) = G(z,u_t^i(z))=G(z,u_t^j(z))$ for $z\in \Gamma_{i,j}^D$.

\begin{proof}
If the branch of $u_t^*(z)$ does not switch at $\Gamma_{i,j}^D$, then $u_t^*(z)=u_t^k(z)$, $z\in D$, for some $1\leq k\le d+1$. Hence, $u_t^*$ is holomorphic in $D$ and $G(z,u_t^*(z))$ is harmonic on $D$, i.e. $\mu_t(D)=0$.

On the other hand, suppose that the maximally relevant saddle point switches at $\Gamma_{i,j}^D$. Note that $\pi \mu_t=\partial_{\bar z}m_t\ge 0$ in the sense of distributions (we know $\mu_t$ to be non-negative).
By the Sokhotski-Plemelj theorem as stated in \cite[Lemma 2]{borcea}, the density $\varrho:\Gamma_{i,j}^D\to\R$ of $\mu_t$ (restricted to $\Gamma_{i,j}^D$) with respect to the length measure on $\Gamma_{i,j}^D$ is given by
$$
\varrho (z) = \frac 1 {2\pi} \Bigl|\lim_{w\to z, w\in D_i } m_t(w) - \lim_{w\to z, w\in D_j} m_t(w)\Bigr|, \qquad z\in \Gamma_{i,j}^D.
$$
	By Theorem \ref{thm:description} (1), we have $m_t(w)=\frac 1 t (w-u_t^*(w))$, $w\in D_i \cup D_j$, hence $m_t(w) = \frac 1 t (w-u_t^i(w))$ for $w\in D_i$ and $m_t(w) = \frac 1 t (w-u_t^j(w))$ for $w\in D_j$. This yields the claimed formula for $\varrho(z)$.
\end{proof}

\subsection{Large $t$ asymptotics}

Let us now analyze the saddle points more explicitly for large values of $t$.
See Figure \ref{fig:G} for an illustration of the analysis that follows.

We denote by $\C_+\coloneq \{z\in\C:\Im(z)>0\}$ the upper half-plane, and by $\C_-\coloneq \{z\in\C:\Im(z)<0\}$ the lower half-plane, respectively. Recall that the saddle point equation \eqref{eq:sp_eqn_2} is equivalent to
\begin{align}\label{eq:sp_eqn_3}
	\frac{u-z}{t}\prod_{j=1}^d(u-\lambda_j)+\frac 1 \alpha \sum_{j=1}^d \prod_{k\neq j} \alpha_j(u-\lambda_k)=0
\end{align}
and has exactly $d+1$ solutions for $z\not\in B$. In order to motivate Definition \ref{def:u_pm} and Lemma \ref{lem:large_t} below, let us begin with a heuristic argument. As $t\to \infty$,  the first term of \eqref{eq:sp_eqn_3} vanishes and we can predict that $d-1$ saddle points $u_t^1,\dots, u_t^{d-1}$ will be close to  the zeros $\lambda_1',\ldots, \lambda_{d-1}'$ of the polynomial $u\mapsto  \frac 1 \alpha \sum_{j} \prod_{k\neq j} \alpha_j(u-\lambda_k)$. As we will see, the remaining two saddle points $u_t^d, u_t^{d+1}$ become unbounded and one of them becomes maximally relevant. Indeed, taking $u\to\infty$ in the initial saddle point equation \eqref{eq:sp_eqn_2}, where  $\frac 1 \alpha \sum_{j=1}^d \frac{\alpha_j}{u-\lambda_j}\sim \frac 1 u$, leads to the equation $\frac u {\sqrt t} +\frac{\sqrt t} u =\frac z{\sqrt t}$. Our aim is to show that $u_t^d, u_t^{d+1}$   are close to the two solutions of this equation whose left-hand side is essentially the Joukowsky function.
\begin{definition}
	\label{def:u_pm}
	We define the two solutions to $\frac u {\sqrt t} +\frac{\sqrt t} u =\frac z{\sqrt t}$ by
	\begin{align}
		\label{eq:u_pm}
		u_{t}^{\pm}(z):=\frac{1}2(z\pm\sqrt{z^2-4t}), \quad z\in\C\setminus \big( (-\infty,-2\sqrt t]\cup [2\sqrt t,\infty) \big),
	\end{align}
	where we choose the branch of the root such that $u_t^+(z)\sim z$ as $|z|\to\infty$ with $z\in \C_+$, and $u_t^-(z)\sim z$ as $|z|\to\infty$ with $z\in \C_-$, respectively.
	If $t=1$ we will denote the solutions by $u^{\pm}(z):=u_1^\pm(z)$.
\end{definition}

\begin{remark}
	\label{rem:square root}
The above corresponds to the choice $\sqrt{z^2-4t}\to 2i\sqrt t$ as $|z|\to 0$ and $\sqrt\cdot$ being the branch of the square root defined on $\C\setminus [0,\infty)$.
Note that $u_t^\pm(z)\in\C_\pm$ for all $z\in \C$. Indeed, the above definition of $u_t^\pm(z)$ and $u^+_t(z)u_t^-(z)=t$ imply that $|u_t^+(z)|\ge\sqrt t$ for all $z\in \C_+$ and $|u_t^-(z)|\ge\sqrt t$ for all $z\in \C_-$. In particular,
$$\Im(z)=\Im\Big(u_t^\pm(z)+\frac{t}{u_t^\pm(z)}\Big)=\Im\big(u_t^\pm(z)\big)\Big(1-\frac{t}{|u_t^\pm(z)|^2}\Big),$$ so the claim $u_t^\pm(z)\in\C_\pm$ follows from comparing signs.

This choice reflects our conception that we switch holomorphic branches of saddle points on the curves where $\mu_t$ is supported, see Corollary \ref{cor:density}. Accordingly, we will see in \eqref{eq:Psi} below that the semicircle law arises on $[-2\sqrt t,2\sqrt t]$ when switching from $u^+_t$ to $u^-_t$ along that interval.
Alternatively, one may define $u_t^\pm$ with a branch cut at $[-2\sqrt t,2\sqrt t]$ as was done in \cite{heatflowrandompoly}, which would lead to technical obstacles in the upcoming harmonicity arguments.
\end{remark}

Let us now make the heuristic $u_t^d(z)\approx u_t^-(z)$, $u_t^{d+1}(z)\approx u_t^+(z)$ as $t\to\infty$ rigorous.

\begin{lemma}\label{lem:large_t}
For any polynomial $P$ and any $C>0$, there exists a (large) $t_0>0$ such that for all $t>t_0$ the following holds. For all $z\in D:=\{z\in\C: |z|<C\sqrt t,|z\pm 2\sqrt t|>2t^{1/3}\}$, the maximally relevant saddle point $u_t^*(z)$ is well defined and satisfies either
\begin{align*}
\Bigg\lvert\frac{u_t^*(z)}{u_t^+(z)}-1\Bigg\rvert<t^{-1/6}\qquad \text{or}\qquad \Bigg\lvert\frac{u_t^*(z)}{u_t^-(z)}-1\Bigg\rvert<t^{-1/6}.
\end{align*}

\end{lemma}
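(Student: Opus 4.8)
The plan is to first locate all $d+1$ saddle points asymptotically as $t\to\infty$ — two of them close to the Joukowsky solutions $u_t^\pm(z)$, the other $d-1$ bounded — and then to decide which one is maximally relevant by invoking the topological characterization in Lemma~\ref{lem:saddle}(3).

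\emph{Step 1: locating the saddles.} I would rescale by $u=\sqrt t\,w$, $z=\sqrt t\,\zeta$, so that $z\in D$ corresponds to $|\zeta|<C$ and $|\zeta\pm 2|>2t^{-1/6}$, and the saddle point equation \eqref{eq:sp_eqn_2} becomes $F_t(w):=w+\frac1\alpha\sum_{j=1}^d\frac{\alpha_j}{w-\lambda_j/\sqrt t}-\zeta=0$. Since $\frac1\alpha\sum_j\alpha_j=1$, on any fixed annulus $\{\rho_0\le|w|\le R_0\}$ this is an $O(t^{-1/2})$ perturbation of the Joukowsky equation $F_\infty(w):=w+w^{-1}-\zeta=0$, whose two roots are exactly $w^\pm(\zeta):=u_t^\pm(z)/\sqrt t=\tfrac12(\zeta\pm\sqrt{\zeta^2-4})$. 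On $D$ one has $|w^+(\zeta)-w^-(\zeta)|=|\sqrt{\zeta^2-4}|\ge c\,t^{-1/12}$ and $c\le|w^\pm(\zeta)|\le c^{-1}$ for some $c=c(C)>0$ (using $w^+w^-=1$). Applying Rouché's theorem to $F_t=F_\infty+(F_t-F_\infty)$ on the circles $\{|w-w^\pm(\zeta)|=t^{-1/4}\}$ — where, by $F_\infty(w)=(w-w^+)(w-w^-)/w$, one gets $|F_\infty|\ge c'\,t^{-1/4}t^{-1/12}=c'\,t^{-1/3}$, dominating $|F_t-F_\infty|=O(t^{-1/2})$ for $t$ large, all uniformly in $\zeta$ — yields exactly one root $w_t^\pm$ of $F_t$ in each disk. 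Back in the original variable, $u_t^{d+1}(z):=\sqrt t\,w_t^+$ and $u_t^{d}(z):=\sqrt t\,w_t^-$ are then saddle points with $|u_t^{d+1}(z)/u_t^+(z)-1|=|w_t^+-w^+(\zeta)|/|w^+(\zeta)|\le c^{-1}t^{-1/4}<t^{-1/6}$ for $t$ large, and analogously for $u_t^d,u_t^-$. Finally, a routine Rouché argument on $Q_{z,t}$ (equivalently on \eqref{eq:sp_eqn_3}), comparing $(u-z)\prod_j(u-\lambda_j)=O(\sqrt t)$ (uniformly for $u$ in a fixed compact set and $z\in D$) with $\frac{t}{\alpha}\sum_j\alpha_j\prod_{k\neq j}(u-\lambda_k)=\frac{t}{\alpha}\cdot\frac{P'(u)}{\prod_j(u-\lambda_j)^{\alpha_j-1}}$, which is of exact order $t$ on a large fixed circle, shows that for $t\ge t_0$ the remaining $d-1$ saddles all lie in a fixed disk $R=\{|u|\le\rho\}$ (the $d-1$ critical points of $P$ that are not roots of $P$), independently of $t$ and $z\in D$. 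Hence $G(z,\cdot)$ is bounded above on those $d-1$ saddles by a constant $M_0=M_0(C,\rho,P)$: indeed $\frac1\alpha\sum_j\alpha_j\log|u-\lambda_j|\le\frac1\alpha\sum_j\alpha_j\log(\rho+\max_k|\lambda_k|)$ and $\Re\big((z-u)^2/2t\big)\le(|z|+\rho)^2/2t\le C^2/2+1$ for $u\in R$, $z\in D$, $t$ large.

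\emph{Step 2: the relevant saddle is unbounded.} Lemma~\ref{lem:saddle}(3) is exactly the mountain-pass identity $G(z,u_t^*(z))=\inf_\gamma\sup_{u\in\gamma}G(z,u)$, the infimum over paths $\gamma$ from $-i\infty$ to $+i\infty$ in $\C$. Any such path crosses the line $\{\Im u=\sqrt t\}$ at some point $u_c$, and for $t$ large $|u_c-\lambda_j|\ge\sqrt t-\max_k|\Im\lambda_k|\ge\tfrac12\sqrt t$ while $|\Im(z-u_c)|\le|\Im z|+\sqrt t<(C+1)\sqrt t$, so that
\begin{align*}
G(z,u_c)=\frac1\alpha\sum_{j=1}^d\alpha_j\log|u_c-\lambda_j|+\Re\Big(\frac{(z-u_c)^2}{2t}\Big)\ge\frac12\log t-\log 2-\frac{(C+1)^2}{2}.
\end{align*}
Hence $\sup_\gamma G\ge\tfrac12\log t-C_3$ with $C_3:=\log2+(C+1)^2/2$ for every admissible $\gamma$, and therefore $G(z,u_t^*(z))\ge\tfrac12\log t-C_3$. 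For $t>e^{2(M_0+C_3)}$ this exceeds $M_0$, so $u_t^*(z)$ cannot be one of the $d-1$ bounded saddles of Step 1, i.e.\ $u_t^*(z)\in\{u_t^d(z),u_t^{d+1}(z)\}$; combined with Step 1, and taking $t_0$ to be the maximum of the finitely many thresholds used, this is the assertion. On the curves of $\mathcal D_t^c\cap D$ where the maximally relevant branch switches, the argument applies to each of the two branches, both of which lie in $\{u_t^d,u_t^{d+1}\}$; the $t^{1/3}$-holes around $\pm2\sqrt t$ are precisely where these two unbounded saddles would coalesce and where $u_t^\pm$ has its branch cut, which is why $D$ omits them (for $z$ on the remaining real segments of the cut one reads $u_t^\pm(z)$ as a boundary value).

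\emph{Main difficulty.} The Rouché estimates of Step 1 are routine in spirit; the genuinely delicate point is the uniformity of all constants, and of $t_0$, over $z\in D$ as $\zeta\to\pm2$, where $w^+(\zeta)$ and $w^-(\zeta)$ nearly collide, forcing the Rouché radius $t^{-1/4}$ to sit between $t^{-1/2}$ and the saddle separation $c\,t^{-1/12}$ — and this is exactly what dictates the cut-out region $|z\pm2\sqrt t|>2t^{1/3}$ in the definition of $D$. The topological Step 2 is short once Lemma~\ref{lem:saddle}(3) is read as the mountain-pass identity above.
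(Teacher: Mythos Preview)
Your proof is correct and follows the same overall architecture as the paper: Rouch\'e to separate the $d-1$ bounded saddles from the two unbounded ones near $u_t^\pm(z)$, followed by a topological argument via Lemma~\ref{lem:saddle}(3) to rule out the bounded saddles as maximally relevant. The Rouch\'e details differ only cosmetically (you rescale to the $w$-variable and use radius $t^{-1/4}$; the paper stays in $u$ and uses radius $t^{1/3}$, and pins down the bounded saddles near the zeros $\lambda_j'$ of $\tilde P$ rather than just inside a large fixed disk).

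The one genuine difference is your Step~2. The paper argues that on the circle $|u|=t^{1/3}$ one has $G(z,u)\sim\tfrac13\log t$, which exceeds the heights of the $d-1$ bounded saddles; this circle therefore separates the merged ``interior'' component of $\Sigma(h)$ from $\Sigma^\pm(h)$ at all heights $h<\tfrac13\log t$, forcing the $d-1$ low mergers to occur entirely among the $\lambda_j$-components and leaving three components until the two high saddles are reached. Your route is shorter: you read Lemma~\ref{lem:saddle}(3) as the mountain-pass identity $G(z,u_t^*(z))=\inf_\gamma\sup_{u\in\gamma}G(z,u)$ and bound the min--max from below by evaluating $G$ at the obligatory crossing of the horizontal line $\{\Im u=\sqrt t\}$, where $\log|u-\lambda_j|\ge\tfrac12\log t-\log 2$ and $\Re((z-u)^2)/2t\ge-(C+1)^2/2$. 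This avoids the Morse-theoretic bookkeeping of which components merge in which order; the paper's argument, on the other hand, yields the slightly stronger intermediate information that the bounded saddles' components never touch $\Sigma^\pm$ at low heights, which is not needed here but is in the spirit of the later barrier arguments in Theorems~\ref{thm:large_t} and~\ref{thm:semicircle}.
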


Observe that this indicates again the connection $u^*_t(z)\approx T_t^{-1}z=\frac{1}2(z\pm\sqrt{z^2-4t})$ to the inverse transport map of \cite{heatflowrandompoly}.

\begin{proof}[Proof of Lemma \ref{lem:large_t}]

In order to quantitatively locate the saddle points as forecast above, we are going to repeatedly apply Rouch\'e's theorem as in the proof of Lemma \ref{lem:domain}. Define 
$$\tilde P(u):=\frac 1 \alpha \sum_{i=1}^d \prod_{k\neq i} \alpha_i(u-\lambda_k)=\prod_{i=1}^{d-1}(u-\lambda_i')$$
 and note that $|\tilde P(u)|>\eps_t^{d-1}$ for all $u\in \partial B_{\eps_t}(\lambda_j')$. Hence, taking $\eps_t=t^{-1/4d}$, for sufficiently large $t$, yields $$\left|\frac{u-z}{t}\prod_{i=1}^d(u-\lambda_i)\right|<\frac{C}{t}<\frac{1}{\sqrt t}<t^{-1/4}<|\tilde P(u)|\quad\text{for all }u\in\partial B_{t^{-1/4d}}(\lambda_j'),$$
and some constant $C$.

By Rouch\'e's theorem, it follows that the saddle point equation $\frac{u-z}{t}\prod_i(u-\lambda_i)+\frac 1 \alpha \sum_i \prod_{k\neq i} \alpha_i(u-\lambda_k)=0$ and $\frac 1 \alpha \sum_i \prod_{k\neq i} \alpha_i(u-\lambda_k)=0$ have the same number of solutions in $B_{t^{-1/4d}}(\lambda_j')$, that is exactly one. We record these solutions as saddle points $u_t^1(z),\dots,u^{d-1}_t(z)$, each of which converging to $\lambda_1',\dots,\lambda_{d-1}'$ (not necessarily different) as $t\to\infty$, respectively.

In particular, we obtain a uniform bound on these saddle points and their height
\begin{align*}
G(z,u_t^j(z))=\frac 1 \alpha \sum_{i=1}^d \alpha_i \log |u_t^j(z)-\lambda_i|+\frac 1 {2t}\Re(z-u_t^j(z))^2 \le K <\infty,\quad j\le d-1
\end{align*}
for some constant $K$.
The remaining two saddle points will be close to the two solutions $u_t^{\pm}(z)=\frac{1}2(z\pm\sqrt{z^2-4t})$ of $\frac u {\sqrt t} +\frac{\sqrt t} u =\frac z{\sqrt t}$. First, note that $|z|<C\sqrt t$ implies $$|u_t^\pm(z)|\le \frac 1 2 (|z|+\sqrt {|z|^2+4t})\le |z|+\sqrt t<(C+1)\sqrt t$$ and it follows that $|u_t^\pm(z)|>\sqrt t/(C+1)$ from $u_t^+(z)u_t^-(z)=t$.
Observe that our assumption $|z\pm 2\sqrt t|>2t^{1/3}$ implies $|u_t^+(z)-u_t^-(z)|=|\sqrt{z^2-4t}|>2t^{1/3}$, hence
\begin{align}\label{eq:Jouk_lower}
\left| \frac u {\sqrt t} +\frac{\sqrt t} u -\frac z {\sqrt t}\right|=\left| \frac {(u-u_t^+)(u-u_t^-)} {u\sqrt t}\right| > \frac {t^{2/3}} {4|u|\sqrt t}>\frac{1}{4(C+2)t^{1/3}},\quad u\in\partial B_{ t^{1/3}}(u_t^\pm(z))
\end{align}
since $|u|<|u_t^\pm(z)|+{t^{1/3}}< (C+2)\sqrt t$ for $t>1$.
On the other hand, for $t$ sufficiently large (only depending on $\lambda_{\max}:=\max_i\{|\lambda_i|\}$) and using $|u_t^\pm(z)|>\sqrt t/(C+1)$, it holds 
$$\left|\frac{\sqrt t}{\alpha}\sum_{i=1}^d\frac{\alpha_i}{u-\lambda_i}-\frac {\sqrt t} u\right|=\frac {\sqrt t}{\alpha}\left|\sum_{i=1}^d\frac{\lambda_i \alpha_i}{(u-\lambda_i)u}\right|\le \frac{2(C+1)^2\lambda_{\max}}{\sqrt t},\quad u\in\partial B_{ t^{1/3}}(u_t^\pm(z)),$$
which can be made smaller than \eqref{eq:Jouk_lower}.
Therefore, by Rouch\'e's theorem, $\frac u {\sqrt t} +\frac{\sqrt t} u -\frac z {\sqrt t}$ and $\frac{\sqrt t}{\alpha}\sum_{i=1}^d\frac{\alpha_i}{u-\lambda_i}+\frac{u-z}{\sqrt t}$ have the same number of zeros in both $B_{t^{1/3}}(u_t^\pm(z))$, which is one each. Locally, we denote them by $u_t^d(z)$ and $u_t^{d+1}(z)$ such that  $|u_t^{d}(z)-u_t^-(z)|<t^{1/3}$ and $|u_t^{d+1}(z)-u_t^+(z)|<t^{1/3}$. We claim that either $u^{d+1}_t(z)$ or $u^{d}_t(z)$ is the maximally relevant saddle point.
Their height for such $j=d,d+1$ is given by
\begin{align*}
G(z,u_t^j(z))=\frac 1 \alpha \sum_i \alpha_i\log |u_t^j(z)-\lambda_i|+\frac 1 {2t}\Re(z-u_t^j(z))^2 = \frac 1 2 \log t +\mathcal O (1),
\end{align*}
eventually surpassing the bounded height of $j\le d-1$ for sufficiently large $t$.
It remains to verify that at least one of the saddle points $u_t^{d},u_t^{d+1}$ is not irrelevant. Inspecting the proof of Lemma \ref{lem:saddle}, observe that at height $h_\varepsilon=\min(G(z,u_t^{d}(z)),G(z,u_t^{d+1}(z)))-\varepsilon$, before the lower one of $u_t^d,u_t^{d+1}$ is attained, the saddle points $u_t^1,\dots,u^{d-1}_t$ have already been crossed due to their lower height. Consequently, $\Sigma (h_\eps)$ has  three connected components, with $\Sigma^+(h_\varepsilon)$ and $\Sigma^-(h_\varepsilon)$ (containing $\pm i\infty$) not connected to any saddle point $u_t^j,j<d$. Indeed, for $j<d$, $ |u|=t^{1/3}$ and $t$ sufficiently large, we have
$$G(z,u_t^j(z))<G(z,u)\sim \frac 1 3\log t<h_\varepsilon$$
and this circular barrier prevents $u_t^j(z)$ to be connected to $\pm i \infty$ at heights $\frac 1 3 \log t$.
\end{proof}

Let us now turn to the proof of Theorem \ref{thm:large_t}.

\begin{proof}[Proof of Theorem \ref{thm:large_t}]
	Let $K$ be a fixed disk. First, we argue why we may choose $t$ sufficiently large such that $K\subset D$, where $D$ is defined in Lemma \ref{lem:large_t}. Recall that during the proof of Lemma \ref{lem:large_t} we chose the different branches  $|u_t^{d}(z)-u_t^-(z)|<t^{1/3}$ and $|u_t^{d+1}(z)-u_t^+(z)|<t^{1/3}$, for $z\in K$, and all other branches $u_t^j$ for $j<d$ to be the ones  close to $\lambda_j'$. In particular, $u_t^d(z)$ and $u_t^{d+1}(z)$ will not coalesce with any other branch, and hence are well defined analytic functions on $K$ according to Lemma \ref{lem:analyt}.
	In particular, $K\subset D$ is simply connected and 
	\begin{align*}
		\Gamma_{d, d+1}^K=\{z\in K:G(z, u_t^d(z))=G(z,u_t^{d+1}(z))\}
	\end{align*}
	is well defined.
	
We turn to the geometric claim of Theorem \ref{thm:large_t} that the curve $\supp(\mu_t)\cap K=\Gamma_{d, d+1}^K $ converges as $t\to\infty$ in Hausdorff metric to a horizontal line through the center of mass $\frac 1  \alpha \sum_{j=1}^d\alpha_j\lambda_j$. For fixed or bounded $z\in\C$, we saw that $u_t^*(z)\sim \pm i \sqrt t$ with $G(z,u_t^*(z))\sim \frac 1 2 \log t$. Thus, no saddle point $u_t^j(z)$ for $j<d$ becomes irrelevant if $t$ is sufficiently large and only $u_t^d,u_t^{d+1}$ can be maximally relevant. 
	By Lemma \ref{lem:large_t}, the saddle points $u_t^{d}, u_t^{d+1}$ will be close to
	\begin{align*}
		u_t^\pm(z) = \frac{1}{2} \left( z \pm \sqrt{z^2 - 4t} \right)
		= \frac{z}{2} \pm i \sqrt{t} \sqrt{1 + \frac{z^2}{4t}} = \pm i \sqrt{t} + \frac{z}{2} + \mathcal O\left(t^{-1/2}\right)
	\end{align*}
	by Taylor's approximation. In particular, the branching points where $u_t^d,u_t^{d+1}$ coalesce are close to those of $u_t^\pm$, which are located at $\pm 2 \sqrt t\not \in K$ for $t>0$ sufficiently large. Hence, for some $w \in \C$ to be determined we write
	\begin{align*}
		u_t^{d+1}(z) &= u_t^+(z)  + \frac w 2 = i \sqrt{t} + \frac{z+w}{2}  + \mathcal O\left(t^{-1/2}\right)\\
		u_t^{d}(z) &= u_t^-(z)  + \frac w 2 = -i \sqrt{t} + \frac{z+w}{2}   + \mathcal O\left(t^{-1/2}\right)\
	\end{align*}
	for $t \to \infty$ uniformly in $z\in K$, by Lemma \ref{lem:large_t}  and the Taylor approximation.
	For $u=u_t^d(z),u_t^{d+1}(z)$, the saddle point equation reads
	\begin{align*}
		0 &= u - z + \frac{t}{\alpha} \sum_j \frac{\alpha_j}{u - \lambda_j}\\
		&=  \pm i \sqrt{t} + \frac{z+w}{2}   -z + \frac{t}{\alpha} \sum_j \frac{\alpha_j}{\pm i \sqrt{t} + \frac{z+w}{2}  - \lambda_j} + \mathcal O\left(t^{-1/2}\right)\\
		&=  \pm i \sqrt{t} -\frac{z+w}{2}  + \frac{t}{\alpha} \sum_j \left( \frac{\alpha_j}{\pm i \sqrt{t}} - \frac{\alpha_j(z/2 + w/2 - \lambda_j)}{(\pm i \sqrt{t})^2} \right) + \mathcal O\left(t^{-1/2}\right)\\
		&=  \frac{w-z}{2}  + \frac{1}{\alpha} \sum_j \alpha_j \left( \frac{z+w}{2} +  - \lambda_j \right) + \ \mathcal O\left(t^{-1/2}\right) \\
		&=w - \frac{1}{\alpha} \sum_j \alpha_j \lambda_j +  \mathcal O\left(t^{-1/2}\right),
	\end{align*}
where we used $1/(a+b)=1/a-b/a^2+b^2/(a^2(a+b))$. Thus, $
w = \frac{1}{\alpha} \sum_j \alpha_j \lambda_j$ is the center of mass and we obtain uniformly in $z\in K$
	\begin{align*}
		u_t^k(z) = \pm i \sqrt{t} + \frac{z}{2} + \frac{1}{2\alpha} \sum_{j=1}^d \alpha_j \lambda_j +  \mathcal O\left(t^{-1/2}\right), \quad \text{for } k = d, d+1.
	\end{align*}
	Now, consider $z\in K$ with $G(z, u_t^d(z)) = G(z, u_t^{d+1}(z))$, then
	\begin{align*}
		0 &=\frac 1 \alpha \sum_{j=1}^d \alpha_j \log \left\lvert \frac{u_t^d(z) - \lambda_j}{u_t^{d+1}(z) - \lambda_j} \right\rvert
		+ \frac{1}{2t} \Re \left[ \left(u_t^d(z) - z \right)^2 - \left(u_t^{d+1}(z) - z \right)^2 \right] \\
		&= \frac{1}{\alpha} \sum_{j=1}^d \alpha_j \log \left\lvert \frac{-i \sqrt{t} + \frac{z+w}{2}  - \lambda_j}{i \sqrt{t} + \frac{z+w}{2}  - \lambda_j} +\mathcal O\left(t^{-1}\right)\right\rvert  + \frac{1}{2t} \Re \left[ -2i \sqrt{t} \left( \frac{-z+w}{2}  \right) \cdot 2 \right] + \mathcal O\left(t^{-1}\right) \\
		&= \frac{1}{\alpha} \sum_{j=1}^d \alpha_j \log \left| 1 - \frac{z + w - 2\lambda_j }{i \sqrt{t}} +\mathcal O\left(t^{-1}\right)\right|
		+ \frac{1}{\sqrt t} \Re(i (z - w))+\mathcal O\left(t^{-1}\right)\\
		&= \frac{1}{\alpha} \sum_{j=1}^d \alpha_j \Re \left( \frac{i \left( z + w - 2 \lambda_j \right)}{\sqrt{t}} \right)
		+ \frac{1}{\sqrt t} \Im( w-z) +\mathcal O\left(t^{-1}\right)\\
		&= \frac{1}{\sqrt t} \Im\left( -z - w  + w - z + \frac{2}{\alpha} \sum_{j=1}^d \alpha_j \lambda_j \right) + \mathcal O\left(t^{-1}\right)\\
		&=\frac{2}{\sqrt t}\Im\left( -z  + \frac{1}{\alpha} \sum_{j=1}^d \alpha_j \lambda_j\right) + \mathcal O\left(t^{-1}\right),
	\end{align*}
	where we used the above decomposition of $1/(a+b)$ again, as well as a Taylor approximation of $\log|1+x|$ in the fourth line. Again, all the errors $\mathcal O (t^{-1})$ are uniform in $z\in K$.
	We conclude the local uniform convergence of the harmonic functions
	$$\sqrt t \big(G(z, u_t^d(z))-G(z, u_t^{d+1}(z))\big) \to \frac{2}{\alpha} \sum_{j=1}^d \alpha_j \lambda_j -2\Im(z).$$
	Since $z\mapsto \Im(z)$ is strictly increasing in the imaginary direction and its nodal set is the real axis, it follows that also the nodal set of $\frac{2}{\alpha} \sum_{j=1}^d \alpha_j \lambda_j -2\Im(z)$ is a single straight line, given by $\Im(z) = \Im\left( \frac{1}{\alpha} \sum_{j=1}^d \alpha_j \lambda_j \right)$. Local uniform convergence of the derivatives follows from e.g.~\cite[Theorem 2.6]{Axler}, such that $\partial_z \sqrt t \big(G(z, u_t^d(z))-G(z, u_t^{d+1}(z))\big)\to i$, and hence
	\begin{align}
		\label{eq:G_derivative}
		\frac{\partial}{\partial y} \sqrt t \big(G(x+iy, u_t^d(x+iy))-G(x+iy, u_t^{d+1}(x+iy))\big)\to -\frac 1 2.
	\end{align}
	Therefore, the nodal set $\Gamma_{d,d+1}^K$ is a single curve for sufficiently large $t$. Further, any point $z\in \Gamma_{d,d+1}^K$ satisfies
	\begin{align*}
		\Im(z) = \Im\left( \frac{1}{\alpha} \sum_{j=1}^d \alpha_j \lambda_j \right)  +\mathcal O\left(t^{-1/2}\right),
	\end{align*}
with other words, $\supp(\mu_t)\cap\Gamma_{d,d+1}^K$ converges as $t \to \infty$ to the horizontal line through the center of mass of $\lambda_j$.
	
It remains to verify the claim \eqref{eq:U=max}. We have seen in the proof of Lemma \ref{lem:large_t} that the saddle points  satisfy $G(z, u_t^j(z))< G(z, u_t^*(z))$, for all  $j=1,\ldots, d-1$ and $z\in K$. It remains to show that neither $u_t^d(z)$ nor $u_t^{d+1}(z)$ are irrelevant.

As we have just shown, the nodal set $\Gamma_{d, d+1}^K=\{z\in K:G(z, u_t^d(z))=G(z,u_t^{d+1}(z))\}$ consists of a single curve  for sufficiently large $t$. By \eqref{eq:G_derivative}, we have $G(z,u_t^{d+1}(z))>G(z,u_t^d(z))$ for all $z$ lying above $\Gamma_{d, d+1}^K$ and $G(z,u_t^{d+1}(z))<G(z,u_t^d(z))$ for all $z$ lying below. Hence, we aim to show that $u_t^{d+1}(z)$ is the maximally relevant saddle point in the domain above $\Gamma_{d, d+1}^K$, and, analogously, that $u_t^d(z)$ is maximally relevant in the domain below. By Lemma \ref{lem:sp_nbh}, it suffices to determine the maximally relevant saddle point for one value of $z$ in the domains separated by $\Gamma_{d, d+1}^K$.

To this end, fix some $z\in K$ that lies above the curve $\Gamma_{d, d+1}^K$. 
Suppose the function $x\mapsto G(z,u_t^{d+1}(z)+x)$ is convex on $\R$, which we will verify below. Since we already know that $G(z,u_t^{d+1}(z))>G(z,u_t^d(z))$, this implies $G(z,u_t^{d+1}(z)+x)>G(z,u_t^d(z))$ for all $x\in \R$. Hence, $u_t^{d+1}(z)+\R$ is a 'barrier' that prevents the components $\Sigma^+(h)$ and $\Sigma^-(h)$ to be connected at height $h<G(z,u_t^{d+1}(z))$, and we can conclude that $u_t^{d+1}(z)$ must be maximally relevant. 

In order to check the claimed convexity, we follow the lines of \eqref{eq:convexity} providing 
\begin{align}
	\label{eq:convexity2}
	\frac{\partial^2}{\partial x^2}G(z,u_t^{d+1}(z)+x)
	=\Re\left(\frac 1 t-\frac 1 \alpha \sum_{j=1}^d \frac{\alpha_j}{(u_t^{d+1}(z)+x-\lambda_j)^2}\right) .
\end{align}
Recall that $u_t^{d+1}(z)= i\sqrt t+\mathcal O (1)$, such that for $|x|>\sqrt{t}/2 $ we obtain 
\begin{align*}
	\frac{\partial^2}{\partial x^2}G(z,u_t^{d+1}(z)+x) \ge \frac 1 t -\frac 1 \alpha \sum_{j=1}^d \frac{\alpha_j}{|u_t^{d+1}(z)+x-\lambda_j|^2}\ge \frac 1 t -\frac 1 \alpha \sum_{j=1}^d \frac{\alpha_j}{t/4+t+\mathcal O(1)}\ge \frac 1 {7t}>0
\end{align*}
for sufficiently large $t$. On the other hand, for $|x|\le {\sqrt t}/{2}$ it holds $$(x+\mathcal O(1))^2-(\Im u_t^{d+1}(z)+\mathcal O(1))^2<0$$ for sufficiently large $t$, which also implies
\begin{align*} 
	\frac{\partial^2}{\partial x^2}G(z,u_t^{d+1}(z)+x) =\frac 1 t-\frac 1 \alpha \sum_{j=1}^d \alpha_j\frac{ (x+\Re u_t^{d+1}(z)-\Re \lambda_j)^2-(\Im u_t^{d+1}(z)-\Im \lambda_j)^2 }{| u_t^{d+1}(z)+x-\lambda_j|^4}>\frac 1 t >0.
\end{align*}

This settles the claimed convexity of $x\mapsto G(z,u_t^{d+1}(z)+x)$. 
The same line of argument choosing some $z$ below $\Gamma_{d, d+1}^K$ proves that $u_t^{d}(z)$ is the maximally relevant saddle point for $z\in K$ lying below the curve $\Gamma_{d, d+1}^K$. So, Lemma \ref{lem:sp_nbh} implies that $u_t^{d+1}(z)$ is maximally relevant for all $z\in K$ located above $\Gamma_{d, d+1}^K$ and $u_t^{d}(z)$ is maximally relevant for all $z\in K$ located below. In particular, both $u_t^{d}(z)$ and $u_t^{d+1}(z)$ are not irrelevant and \eqref{eq:U=max} holds.
\end{proof}

\subsection{Proof of Theorem \ref{thm:semicircle}}

In the upcoming section, we will use the following rescaling of $z$. For fixed $\tilde z$ in $\C$ we denote by $u_t^\pm(\sqrt t \tilde z)$ the solutions of $\frac{u}{\sqrt t}+\frac{\sqrt t}{u}=\tilde z$

\begin{proof}[Proof of Theorem \ref{thm:semicircle}]
We prove the weak convergence of $\tilde \mu_t=\mu_t(\sqrt t\cdot )$ to the standard semicircle distribution $\mathsf{sc}_1$ by considering the respective logarithmic potentials. To this end, we show that for large $t$, $\tilde U_t(\tilde z)=U_t(\sqrt t \tilde z)- \frac 1 2 \log |t| \sim \Psi(\tilde z)$, where
\begin{align}
	\label{eq:Psi}
	\Psi(\tilde z)=
	\begin{cases}
		\log\left|\frac{\tilde z+\sqrt{\tilde z^2-4}}{2}\right|+\frac 1 4 \Re(\tilde z^2-\tilde z\sqrt{\tilde z^2-4})-\frac 1 2\quad \text{for } \tilde z \in \C_+\cup\R\\
		\log\left|\frac{\tilde z-\sqrt{\tilde z^2-4}}{2}\right|+\frac 1 4 \Re(\tilde z^2+\tilde z\sqrt{\tilde z^2-4})-\frac 1 2 \quad \text{for } \tilde z \in \C_-\cup\R
	\end{cases}
\end{align}
 is the logarithmic potential of $\mathsf{sc}_1$, see for instance \cite[\S 2.4]{heatflowrandompoly}. Note that therein, the authors chose a different branch of the square root as we have done here, hence the deviating formula. Recall that $\mathsf{sc}_1$ is a probability measure concentrated in $\{|z|\le 2\}$. By Lemma \ref{lem:P_t^n roots},  $(\tilde \mu_t)_{t>0}$ is a tight family of probability measures and it therefore suffices to prove vague convergence. We shall see that it suffices to show vague convergence in the domain $D\coloneq B_{2-\eps}(0)$, since $\mathsf{sc}_1(D)>1- \varepsilon$ for all $\varepsilon>0$.  

Fix $\tilde z\in D\setminus \R$. By Lemma \ref{lem:large_t}, the maximally relevant saddle point $u_t^*(\sqrt t\tilde z)$ is either close to $u_t^+(\sqrt t\tilde z)$ or $u_t^-(\sqrt t \tilde z)$ for large values of $t$ so,
\begin{align*}
U_t(\sqrt t \tilde z)&=G(\sqrt t \tilde z, u_t^*(\sqrt t \tilde z))\sim\frac 1 \alpha \sum_j\alpha_j \log|u_t^\pm(\sqrt t \tilde z)-\lambda_j| + \frac{1}{2t} \Re(\sqrt t \tilde z-u_t^\pm(\sqrt t \tilde z))^2\\
&=\frac 1 \alpha \log \left|\prod_j (u_t^\pm(\sqrt t \tilde z)-\lambda_j)^{\alpha_j}\right|+\frac {1} {2t} \Re(u_t^\mp(\sqrt t \tilde z))^2\\
&\sim \log \left|\frac{\sqrt {t}}{2} (\tilde z \pm \sqrt {\tilde z^2 -4})\right|+\frac {1} {2t} \Re(u_t^\mp(\sqrt t \tilde z))^2\\
&=\frac 1 2 \log|t|+\log\left|\frac{\tilde z\pm\sqrt{\tilde z^2-4}}{2}\right|+\frac 1 4 \Re(\tilde z^2\mp\tilde z\sqrt{\tilde z^2-4})-\frac 1 2.
\end{align*}
In particular, $\tilde U_t(\tilde z)$ converges pointwise to one of the two solutions, which we claim to be \eqref{eq:Psi} as will be shown below.
Then, from Lemma \ref{lem:upperbound}, we have the upper bound $\tilde U_t(\tilde z)<c$ uniformly in $|\tilde z|<2-\varepsilon$. Hence, taking the same route as in the proof of Theorem \ref{thm:measure}, see \eqref{eq:L1loc}, we obtain $L^1_{\mathrm{loc}}$-convergence of the logarithmic potential on $D$, i.e.~vague convergence of the sequence $\tilde \mu_t$ on $D$. 

It remains to determine the maximally relevant saddle point and the limit of $\tilde U_t(\tilde z)$. By Lemma \ref{lem:sp_nbh}, it suffices to consider some $\tilde z$ in each of the components separated by the curves in the nodal set $\{G(\sqrt t \tilde z,u^d(\sqrt t \tilde z))=G(\sqrt t \tilde z, u^{d+1}(\sqrt t \tilde z))\}$, which we now aim to locate more explicitly.
For sufficiently large $t$, we see that
\begin{align}
\label{eq:function_H}
G(\sqrt t \tilde z, u_t^+(\sqrt t \tilde z))-G(\sqrt t \tilde z, u_t^-(\sqrt t \tilde z))&\sim \log \left |\frac{u_t^+(\sqrt t \tilde z)}{u_t^-(\sqrt t \tilde z)}\right |+\frac {1}{2t} \Re(u_t^-(\sqrt t \tilde z)^2-u_t^+(\sqrt t \tilde z)^2) \notag \\
&= \log \left|\frac 1 2 (\tilde z+\sqrt{\tilde z^2-4})\right|^2+\frac 1 2 \Re(-\tilde z\sqrt{\tilde z^2-4}) \notag \\
&\coloneq H(\tilde z),
\end{align}
where we used that $u_t^+(\sqrt t \tilde z)u_t^-(\sqrt t \tilde z)=t$ in the last equality. Recall that we chose the branch of $\sqrt{\tilde z^2-4}$ in Definition \ref{def:u_pm} such that $H(\tilde z)$ is a harmonic function in the domain $D$. Now taking the derivative in $\tilde z$ yields
\begin{align*}
\partial_{\tilde z} H(\tilde z) &= \frac{1}{\tilde z+\sqrt{\tilde z^2-4}}\left(1+ \frac{\tilde z}{\sqrt{\tilde z^2-4}}\right)-\frac 1 4\left(\sqrt{\tilde z^2-4}+\frac{\tilde z^2}{\sqrt{\tilde z^2-4}}\right) \\
&=\frac{1}{\sqrt{\tilde z^2-4}}\left(1-\frac 1 2 \tilde z^2 +1\right)=-\frac 1 2\sqrt{\tilde z^2-4}.
\end{align*}
Here, we used that $\partial_z \log|z|^2=\frac 1 z$ is the Wirtinger derivative in the complex variable $z$.
In particular,
\begin{align}
\label{eq:function_H_derivative}
\frac{\partial}{\partial \tilde y}\frac{H(\tilde x +i\tilde y)}{2}=-\Im \partial_{\tilde z} H(\tilde z)=\frac 1 2 \Im(\sqrt{(\tilde x+i\tilde y)^2-4})>0,
\end{align}
by our choice of the branch of the square root, see Remark \ref{rem:square root}.
Since $H(\tilde x)=0$ for all $\tilde x \in(-2 ,2)$, this implies $H(\tilde z)>0$ for $\tilde z\in\C_+$ and $H(\tilde z)<0$ for $\tilde z\in\C_-$.	In particular, for any $\epsilon>0$ there exists a $\delta>0$ such that $H(\tilde z)>\epsilon$ for $\tilde z\in D$ with $\Im(\tilde z)>\delta$ and $H(\tilde z)<-\epsilon$ for $\tilde z\in D$ with $\Im(\tilde z)<-\delta$. By convergence \eqref{eq:function_H}, we have $G(\sqrt t\tilde z,u_t^+(\sqrt t \tilde z))-G(\sqrt t \tilde z,u_t^-(\sqrt t \tilde z))>\epsilon/2$ for all $\tilde z\in D$ with $\Im(\tilde z)>\delta/2$, and analogously negative in the lower half-plane. Again, by convergence of harmonic functions (and their derivatives), we obtain that the nodal set of $G(\sqrt t \tilde z,u_t^d(\sqrt t \tilde z))-G(\sqrt t \tilde z,u_t^{d+1}(\sqrt t \tilde z))$ is a single curve $\gamma_d$ within the strip $(-2+\varepsilon,2-\varepsilon)+i(-\delta/2,\delta/2)$.

It remains to determine the maximally relevant saddle point in the regions separated by this curve.  
We repeat the argument we have presented in the proof of Theorem \ref{thm:large_t}, except that we now consider some explicit $\tilde z\in D$ that lies above $\gamma_d$, namely $\tilde z =i$, and verify that $u_t^{d+1}(\sqrt t\tilde z)$ is indeed the maximally relevant saddle point. The same line of argument choosing $\tilde z=-i$ lying below $\gamma_d$ proves that $u_t^{d}(\sqrt t\tilde z)$ is the maximally relevant saddle point below the curve $\gamma_d$.
Once again, we use that we have seen $G(\sqrt t\tilde z, u_t^{d+1}(\sqrt t \tilde z))>G(\sqrt t\tilde z, u_t^{d}(\sqrt t\tilde z ))$ for our chosen $\tilde z=i$. It can be shown analogously to \eqref{eq:convexity2} and below, that $x\mapsto G(\sqrt t\tilde z, u_t^{d+1}(\sqrt t \tilde z)+x)$ is convex on $\R$: We have that $u_t^{d+1}(i\sqrt t)=ic\sqrt t+\mathcal O(1)$, where $c>0$ by our choice of the square root, see Remark \ref{rem:square root}. Then, the same calculation as already presented in \eqref{eq:convexity2} yields a barrier through the saddle point $u_t^{d+1}$ that prevents $\Sigma^+(h)$ to be connected to $\Sigma^-(h)$ at heights $h<G(i\sqrt t,u_t^{d+1}(i\sqrt t))$.

So, Lemma \ref{lem:sp_nbh} implies that $u_t^{d+1}(\sqrt t\tilde z)$ is maximally relevant for all $\tilde z\in D$ located above the curve $\gamma_d$ and $u_t^{d}(\sqrt t\tilde z)$ is maximally relevant for all $\tilde z\in D$ located below. Hence, $\tilde U_t(\tilde z)\to\psi(\tilde z)$ as $t\to\infty$ in $L^1_{\mathrm{loc}}(D)$, i.e.~$\tilde \mu_t\big|_D \to \mathsf{sc}_1\big|_D$ vaguely.

It remains to remove the $\varepsilon$-constraint on the domain. In particular, $\tilde\mu_t(D)\to \mathsf{sc}_1(D)\geqslant 1-\frac \eps 2$ by the Portmanteau theorem and it follows that $\tilde \mu_t(D)>1-\eps$ for sufficiently large $t$. Finally, let $\varphi\in\mathcal C_c^\infty$ be a testfunction, then we have
\begin{align*}
\left|\int_{\C} \varphi d\tilde\mu_t-\int_{\C} \varphi d\mathsf{sc}_1\right|\le \left|\int_{D} \varphi d\tilde\mu_t-\int_{D} \varphi d\mathsf{sc}_1\right|+\left|\int_{D^c} \varphi d\tilde\mu_t-\int_{D^c} \varphi d\mathsf{sc}_1\right|\le \eps+ \|\varphi\|_\infty 2\eps .
\end{align*}
Since we know that the vague limit is the probability distribution $\mathsf{sc}_1$, we have proven weak convergence of $\tilde \mu_t$.
\end{proof}

Recall from the proof of Lemma \ref{lem:large_t} that the $d-1$ saddle points $u_t^1(z),\dots,u_t^{d-1}(z)$ converge to the zeros $\lambda_1',\dots,\lambda_{d-1}'$ of $\tilde P(u)=\frac 1 \alpha \sum_{j=1}^d \frac{\alpha_j}{u-\lambda_j}$ as $t\to\infty$, and, for large $t$, the solutions $u_t^{d}(z)$, $u_t^{d+1}(z)$ are close to $u_t^-(z)$, $u_t^+(z)$ as defined in \eqref{eq:u_pm}. Consistent with Corollary \ref{cor:density}, we have the following.

\begin{corollary}\label{cor:large_t_rescaled}
For any $\tilde z\in\C_+$ and sufficiently large $t$, $u_t^{d+1}(\sqrt t \tilde z)$ is the maximally relevant saddle point, and for any $\tilde z\in\C_-$ and sufficiently large $t$, so is $u_t^{d}(\sqrt t \tilde z)$.
\end{corollary}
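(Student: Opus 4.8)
The plan is to upgrade the local statement already contained in the proof of Theorem~\ref{thm:semicircle} to all of $\C_+$ (respectively $\C_-$) by a soft connectedness argument, using the weak convergence $\tilde\mu_t=\mu_t(\sqrt t\,\cdot)\to\mathsf{sc}_1$ itself as the obstruction to the maximally relevant branch switching away from the real axis. Fix $\tilde z_0\in\C_+$; I treat only this case, since $\tilde z_0\in\C_-$ is entirely analogous with $i$ replaced by $-i$ and the branch $u_t^{d+1}$ replaced by $u_t^{d}$ throughout. Recall two facts valid for all sufficiently large $t$: by Lemma~\ref{lem:large_t} the maximally relevant saddle point at $\sqrt t\,\tilde z_0$ is one of $u_t^{d}(\sqrt t\,\tilde z_0)$, $u_t^{d+1}(\sqrt t\,\tilde z_0)$ (these being the only two saddles whose height grows like $\tfrac12\log t$, all others staying bounded by some constant $K$); and, from the proof of Theorem~\ref{thm:semicircle}, $u_t^{d+1}(\sqrt t\,i)$ is maximally relevant. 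It therefore suffices to exclude that $u_t^{d}(\sqrt t\,\tilde z_0)$ is maximally relevant along some sequence $t_k\to\infty$.

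Suppose it is. Pick a compact set $\tilde L\subset\C_+$ that is a closed neighbourhood of the segment $[\tilde z_0,i]$ and satisfies $\operatorname{dist}(\tilde L,[-2,2])\ge 2\rho>0$ (possible because $\tilde z_0,i\in\C_+$). For $t_k$ large, $\sqrt{t_k}\,\tilde L$ lies inside the domain $D$ of Lemma~\ref{lem:large_t}, contains no branch point (the scaled branch points cluster near $\pm2$, as in the proof of Theorem~\ref{thm:large_t}), all $u_{t_k}^j$ are single-valued analytic on it, and on it only $u_{t_k}^{d},u_{t_k}^{d+1}$ can be maximally relevant. The set of $\tilde z$ where $u_{t_k}^{d+1}$ is maximally relevant and the set where $u_{t_k}^{d}$ is are both open (Lemma~\ref{lem:sp_nbh}) and together exhaust $\tilde L$ minus the switch locus; since $\tilde z_0$ lies in the second set and $i$ in the first, the straight segment $[\tilde z_0,i]$ meets the switch locus at some point $\tilde w_{t_k}$. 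By Lemma~\ref{lem:domain} the switch locus is, near $\tilde w_{t_k}$, a single smooth arc that can terminate only at branch points or on $\partial\tilde L$, both of which lie at distance bounded below from $\tilde w_{t_k}\in[\tilde z_0,i]$; hence this arc has length at least $\rho$ inside $B_{\rho}(\tilde w_{t_k})\subset\tilde L$, provided $\rho$ is chosen small enough.

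On that arc, Corollary~\ref{cor:density} gives the $\tilde\mu_{t_k}$-density, with respect to arclength, equal to $\bigl|u_{t_k}^{d}(\sqrt{t_k}\,\tilde z)-u_{t_k}^{d+1}(\sqrt{t_k}\,\tilde z)\bigr|/(2\pi\sqrt{t_k})$, and by Lemma~\ref{lem:large_t} (using $u_{t_k}^{d}\approx u_{t_k}^-$, $u_{t_k}^{d+1}\approx u_{t_k}^+$ and $u_{t}^+u_{t}^-=t$) this converges uniformly on $\tilde L$ to $|\tilde z^2-4|^{1/2}/(2\pi)$, which is at least $\rho/\pi$ on $\tilde L$ because $\operatorname{dist}(\tilde L,[-2,2])\ge2\rho$. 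Hence, for all large $k$, $\tilde\mu_{t_k}(\tilde L)\ge\tilde\mu_{t_k}(B_{\rho}(\tilde w_{t_k}))\ge c\rho^2>0$ for some absolute $c>0$. But $\tilde L\subset\C_+$ is compact with $\mathsf{sc}_1(\tilde L)=0$, so the weak convergence $\tilde\mu_{t}\to\mathsf{sc}_1$ of Theorem~\ref{thm:semicircle} and the Portmanteau theorem force $\tilde\mu_{t}(\tilde L)\to0$, a contradiction. Thus $u_t^{d+1}(\sqrt t\,\tilde z_0)$ is maximally relevant for all sufficiently large $t$.

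The only genuinely delicate step is the uniform lower bound on the length of the piece of the switch locus separating $\tilde z_0$ from $i$: one must invoke the structure of $\mathcal D_t^c$ from Lemma~\ref{lem:domain} (local finiteness, and termination only at branch points or the boundary) to prevent this arc from escaping through an arbitrarily small neighbourhood of the scaled branch points near $\pm2$. A direct alternative — pushing the convexity/barrier argument of Theorem~\ref{thm:semicircle} to an arbitrary $\tilde z\in\C_+$ — is obstructed by the failure of convexity of $x\mapsto G(\sqrt t\,\tilde z,u_t^{d+1}(\sqrt t\,\tilde z)+x)$ when $u_t^{d+1}(\sqrt t\,\tilde z)$ comes close to some $\lambda_j$, which is why the soft route through Theorem~\ref{thm:semicircle} is preferable.
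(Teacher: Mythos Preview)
The paper does not give a standalone proof of this corollary; it is simply stated after Theorem~\ref{thm:semicircle} with the hint ``Consistent with Corollary~\ref{cor:density}''. Your argument makes this implication precise and is exactly in the spirit of that hint: you feed the weak convergence $\tilde\mu_t\to\mathsf{sc}_1$ back through the density formula of Corollary~\ref{cor:density} to rule out a switch of the maximally relevant branch inside $\C_+$. This is the natural route, and it is correct; the direct propagation via Lemma~\ref{lem:sp_nbh} alone does not suffice, because the nodal set of $H$ genuinely enters $\C_+$ for $|\Re\tilde z|>2$ (as noted later in the proof of Corollary~\ref{cor:small_t}), so a path from $i$ to $\tilde z_0$ may well cross $\Gamma_{d,d+1}$.

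A few minor points. First, not all rescaled branch points cluster near $\pm2$: Lemma~\ref{lem:large_t} only shows that the rescaled branch locus lies in $\{|\tilde z|\ge C\}\cup\{|\tilde z\pm2|\le 2t^{-1/6}\}$; choosing $C>\sup_{\tilde L}|\tilde z|$ still excludes them from $\tilde L$, so your conclusion stands. Second, to invoke Corollary~\ref{cor:density} in $B_\rho(\tilde w_{t_k})$ you tacitly need that no curve $\Gamma_{i,j}$ with $i,j\le d-1$ interferes; this is harmless since on all of $\tilde L$ the heights $G(\cdot,u_t^d),G(\cdot,u_t^{d+1})\sim\tfrac12\log t$ are separated from the bounded heights of $u_t^1,\dots,u_t^{d-1}$ by a growing gap, so those lower saddles never affect the maximally relevant branch and the density formula on the switching arc of $\Gamma_{d,d+1}$ holds regardless. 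Third, your diagnosis of why the direct barrier argument fails is slightly off: for fixed $\tilde z\in\C_+$ one has $\Im u_t^{d+1}(\sqrt t\,\tilde z)\sim\sqrt t\,\Im u^+(\tilde z)\to\infty$, so $u_t^{d+1}$ never approaches any $\lambda_j$; the actual obstruction is that the convexity estimate of~\eqref{eq:convexity2} needs $|\Im u^+(\tilde z)|>1$, which fails for $\tilde z$ near the real axis. None of this affects the correctness of your argument.
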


\subsection{Small $t$ asymptotics}
\label{subsec:small_t}
The proof of Theorem \ref{thm:small_t} follows a similar strategy as we have used in the large-$t$ case above. We begin by identifying the saddle points that could potentially become maximally relevant, analogous to Lemma \ref{lem:large_t}.

\begin{lemma}
\label{lem:small_t}
For any $C_{\max}>0$ there exist some (large) $c>0$ and some (small) $t_0>0$ such that $D\coloneq \{z\in\C\setminus B:\,|z|<C_{\max}, \min_{j\le d+1}|z-\lambda_j|>c\sqrt t\} $ satisfies $\mu_t(D)=0$ for all $t<t_0$. In particular, $U_t(z,u_t^*(z))$ is locally harmonic in $D$, and $u_t^*(z)$ satisfies $|u_t^*(z)-z|<\frac c 2 \sqrt t$ for all $z\in D$.
\end{lemma}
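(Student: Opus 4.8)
The plan is to mirror the structure of the proof of Lemma~\ref{lem:large_t}, but with the roles of the "far" and "near" saddle points reversed: for small $t$, the term $\frac{t}{\alpha}\sum_j\frac{\alpha_j}{u-\lambda_j}$ in the saddle point equation \eqref{eq:sp_eqn_2} is a small perturbation away from the singularities, so $d$ of the saddle points cluster near the $\lambda_j$ while one saddle point sits close to $z$ itself. We will localize these via Rouch\'e's theorem and show that, on the domain $D$ where $z$ stays bounded and uniformly far (on scale $\sqrt t$) from the $\lambda_j$, the saddle point near $z$ is the maximally relevant one.

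First I would make the Rouch\'e estimates precise. Fix $C_{\max}$ and let $\eps_0=\tfrac14\min_{i\neq j}|\lambda_i-\lambda_j|$. On $\partial B_{\rho\sqrt t}(\lambda_j)$ with $\rho$ a large fixed constant, the "main" factor $|z-u|\prod_i(u-\lambda_i)$ is bounded below by $c_1\rho\sqrt t\cdot(\text{const})$ while the perturbation $\frac t\alpha\sum_i\prod_{k\neq i}\alpha_i(u-\lambda_k)$ is $O(t)$ — so for $t$ small the perturbation is dominated, giving exactly one saddle point $u_t^j(z)$ in each $B_{\rho\sqrt t}(\lambda_j)$, $j=1,\dots,d$, with $u_t^j(z)=\lambda_j+O(\sqrt t)$. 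Writing $u=\lambda_j+\sqrt t\, v$, the saddle point equation becomes $v\sim \tfrac1\alpha\cdot\tfrac{\alpha_j}{v}\cdot\tfrac1{\prod_{k\neq j}(\lambda_j-\lambda_k)}\cdot(\dots)$, so in fact $|u_t^j(z)-\lambda_j|\asymp\sqrt t$ precisely — this is the microscopic semicircle scale and matches Theorem~\ref{thm:small_t}. For the last saddle point, apply Rouch\'e on $\partial B_{(c/2)\sqrt t}(z)$: the linear factor $(u-z)$ has modulus $(c/2)\sqrt t$ there, while $\frac t\alpha\sum_i\frac{\alpha_i}{u-\lambda_i}$ has modulus $\lesssim t/(c\sqrt t)=\sqrt t/c$, which is $<(c/2)\sqrt t$ once $c>\sqrt2$; hence there is exactly one saddle point $u_t^{d+1}(z)$ with $|u_t^{d+1}(z)-z|<\tfrac c2\sqrt t$, proving the last assertion of the lemma.

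Next I would compare heights. For $j\le d$, since $u_t^j(z)=\lambda_j+O(\sqrt t)$ we get $G(z,u_t^j(z))=\tfrac{\alpha_j}{\alpha}\log|u_t^j(z)-\lambda_j|+\tfrac1\alpha\sum_{i\neq j}\alpha_i\log|\lambda_j-\lambda_i|+O(1)=\tfrac{\alpha_j}{\alpha}\cdot\tfrac12\log t+O(1)\to-\infty$, whereas $G(z,u_t^{d+1}(z))=\tfrac1\alpha\sum_i\alpha_i\log|z-\lambda_i|+\Re\tfrac{(z-u_t^{d+1})^2}{2t}=O(1)$ since $|z-\lambda_i|>c\sqrt t$ keeps the log bounded and $|z-u_t^{d+1}(z)|^2/t=O(1)$. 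So $u_t^{d+1}(z)$ strictly dominates all others for $t$ small, uniformly on $D$ (recall $|z|<C_{\max}$ bounds everything from above). To upgrade "dominant height" to "maximally relevant" in the sense of Lemma~\ref{lem:saddle}(3), I would reuse the convexity-barrier argument from the proofs of Theorems~\ref{thm:large_t} and \ref{thm:semicircle}: at a height $h_\eps=G(z,u_t^{d+1}(z))-\eps$, all the deep saddle points $u_t^1,\dots,u_t^d$ have already been passed, so $\Sigma(h_\eps)$ cannot yet connect $+i\infty$ with $-i\infty$; this forces $u_t^{d+1}(z)$ to be exactly the first saddle point joining $\Sigma^+$ and $\Sigma^-$, i.e.\ the maximally relevant one. (One should check a circular barrier on $\partial B_\delta(\lambda_j)$ of suitably chosen radius, on which $G\to-\infty$ as $t\to0$, keeps each $\Sigma^j(h_\eps)$ away from $\pm i\infty$; this is the same mechanism as the barrier $|u|=t^{1/3}$ in Lemma~\ref{lem:large_t}.) Since the branch of $u_t^*$ does not switch on the connected set $D\cap\mathcal D_t$ by Lemma~\ref{lem:sp_nbh}, $U_t(z)=G(z,u_t^*(z))$ is harmonic there, hence $\mu_t(D)=0$.

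The main obstacle I anticipate is making the height comparison and the barrier argument genuinely \emph{uniform} in $z\in D$ as $t\to0$, especially controlling the regime where $z$ is close (on scale $\sqrt t$) to the cluster $B_{\rho\sqrt t}(\lambda_j)$ but still outside $D$ — i.e.\ ensuring the constants $c$ (distance threshold) and $\rho$ (Rouch\'e radius) can be chosen consistently so that the disk $B_{(c/2)\sqrt t}(z)$ around the $z$-saddle and the disks $B_{\rho\sqrt t}(\lambda_j)$ around the clustered saddles are disjoint, so that $u_t^{d+1}(z)$ never collides with any $u_t^j(z)$ on $D$ and the branches stay well separated. Choosing $c$ large relative to $\rho$ (and $\rho$ large enough for the first Rouch\'e step) resolves this, but the bookkeeping of the order of quantifiers ($C_{\max}$, then $c$, then $t_0$) must be done carefully. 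A secondary point is verifying $z\notin B$ implies the saddle points are genuinely simple so the analyticity/branch-stability input from Lemma~\ref{lem:analyt} and Lemma~\ref{lem:sp_nbh} applies — but this is automatic on $D\subseteq\C\setminus B$.
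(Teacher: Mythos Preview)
Your Rouch\'e localization is essentially the same as the paper's and is fine. The real gap is in the height comparison. You claim
\[
G(z,u_t^j(z))=\frac{\alpha_j}{\alpha}\log|u_t^j(z)-\lambda_j|+O(1)\sim\frac{\alpha_j}{2\alpha}\log t\to-\infty,
\]
but you have dropped the quadratic term $\Re\frac{(z-u_t^j(z))^2}{2t}$. Since $u_t^j(z)\to\lambda_j$ while $z$ may sit at a \emph{fixed} positive distance from $\lambda_j$, this term is of order $\Re((z-\lambda_j)^2)/(2t)$, which for $\Re((z-\lambda_j)^2)>0$ blows up to $+\infty$ far faster than $\log t\to-\infty$. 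Concretely, with $d=1$, $\lambda_1=0$ and $z>2\sqrt t$ real, one finds $G(z,u_t^1(z))\approx z^2/(2t)\to+\infty$, so $u_t^1(z)$ has \emph{higher} height than $u_t^{d+1}(z)$ and is an irrelevant saddle point. Your Morse-theoretic claim that ``all the deep saddle points have already been passed'' at level $h_\eps$ therefore fails, and so does the proposed circular barrier on $\partial B_\delta(\lambda_j)$: on that circle the same quadratic term $\Re\frac{(z-u)^2}{2t}$ can be large positive, so $G$ does \emph{not} tend to $-\infty$ there.

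The paper avoids height comparisons altogether. It shows $u_t^{d+1}(z)$ is maximally relevant by building two explicit barriers based on the angular sectors of $z-u$: a ``high'' region $V_1=\{|\Re(z-u)|>2|\Im(z-u)|\}$ (plus local convexity of $x\mapsto G(z,u_t^{d+1}(z)+x)$ near the saddle) on which $G(z,u)\ge G(z,u_t^{d+1}(z))$, preventing any path in $\Sigma(G(z,u_t^{d+1}(z))-\eps)$ from joining $\pm i\infty$; and a ``low'' region $V_2=\{|\Im(z-u)|>2|\Re(z-u)|\}$ (plus local concavity in the vertical direction) on which $G(z,u)\le G(z,u_t^{d+1}(z))$, exhibiting such a path at level $G(z,u_t^{d+1}(z))+\eps$. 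This works regardless of whether the other saddle points are below or above $u_t^{d+1}(z)$, which is exactly what your argument is missing.
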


\begin{proof}

We claim that for small enough values of $t$, $d$ saddle points lie close to the roots $\lambda_1,\dots,\lambda_d$ of $P^n$ and the maximally relevant saddle point close to $z$. Indeed, assume that $|z|<C_{\max}$ and $|z-\lambda_j|>c_1\sqrt t$ for all $j=1,\dots ,d$ and a constant $c_1>2$. We define the radius $r_1\coloneq \frac{c_1\sqrt t}{2}$, consider the saddle point equation $u-z+\frac t \alpha \sum_{i=1}^d \frac{\alpha_i}{u-\lambda_i}=0$, and observe that $$\left|\frac t \alpha \sum_i\frac{\alpha_i}{u-\lambda_i} \right|<\frac t \alpha \sum_i\frac{\alpha_i}{r_1}=\frac{t}{r_1}<r_1=\frac{c_1\sqrt t}{2}=|u-z|, \quad  u\in\partial B_{r_1}(z)$$ if $c_1>2$, since $|u-\lambda_j|>\frac{c_1\sqrt t}{2}$ if $|z-\lambda_j|>c_1\sqrt t$.
 Hence, Rouché's theorem yields that $u-z+\frac t \alpha \sum_{i=1}^d \frac{\alpha_i}{u-\lambda_i}=0$ has the same number of solutions in $\partial B_{r_1}(z)$ as $u-z=0$, that is one solution. We denote this saddle point by $u_t^{d+1}(z)$. In particular, for all $z\in D_1\coloneq\{z\in\C:|z|<C_{\max}, \min_{j\le d+1}|z-\lambda_j|>c_1\sqrt t\}\setminus B$, we have $|u_t^{d+1}(z)-z|<\frac{c_1\sqrt t}{2}$.

We use a similar argument to locate the other $d$ saddle points. Assume now that $|z-\lambda_j|>c_2\sqrt t$ such that $c_2$ fulfills $c_2>K_2(\delta(\lambda), \lambda_{\max}, \alpha_{\max})$ for a constant $K_2$, yet to be defined, that only depends on $\lambda_{\max}=\max_i\{|\lambda_i|\}$, $\alpha_{\max}=\max_i\{|\alpha_i|\}$  and $\delta(\lambda)\coloneq \min_{i,j}|\lambda_i-\lambda_j|$, the minimal distance between the roots. We now consider the equation $\frac t \alpha \sum_{i=1}^d\prod_{k\neq i}\alpha_i(u-\lambda_k)-(z-u)\prod_{i=1}^d(u-\lambda_i)=0$. In this case, we define the radius $r_2\coloneq \frac{c_2\sqrt t}{2}$ and observe that for $t<t_1:=\frac{4}{c_2^2}$, $$\left|\frac t \alpha \sum_{i=1}^d\prod_{k\neq i}\alpha_i (u-\lambda_k)\right|<\frac {td} {\alpha} \left(|\alpha_{\max}|(1+2|\lambda_{\max}|)^{d-1}\right)\quad\text{and}\quad |u-z|>\frac 1 2 |z-\lambda_j|,\quad u\in\partial B_{r_2}(\lambda_j).$$

Further, for $t<t_2:=\frac{\delta(\lambda)^2}{c_2^2}$, $$\left|(z-u)\prod_{i=1}^d (u-\lambda_i)\right|=r_2|z-u|\prod_{k\neq j}|u-\lambda_k|>r_2c_2\sqrt t\left(\frac{\delta(\lambda)}{2}\right)^{d-1},\quad u\in\partial B_{r_2}(\lambda_j).$$
(Note that, for $t<t_2$, $r_2<\frac 1 2 \delta(\lambda)$ such that every ball $B_{r_2}(\lambda_j)$ only contains one zero for all $j=1,\dots,d$.)

So,
 $$\left|\frac t \alpha \sum_{i=1}^d\prod_{k\neq i}\alpha_i (u-\lambda_k)\right|<\frac{c_2^2t}{2}\left(\frac{\delta(\lambda)}{2}\right)^{d-1}<r_2|z-u|\prod_{k\neq j}|u-\lambda_k|=\left|(z-u)\prod_{i=1}^d (u-\lambda_i)\right|,\quad u\in\partial B_{r_2}(\lambda_j)$$ if $c_2>\sqrt{\frac{2d(|\alpha_{\max}|(1+2|\lambda_{\max}|))^{d-1}}{(\delta(\lambda)/2)^{d-1}}}$.
Again, Rouché's theorem implies that $\frac t \alpha \sum_{i=1}^d\prod_{k\neq i}\alpha_i(u-\lambda_k)-(z-u)\prod_{i=1}^d(u-\lambda_i)=0$ has the same number of solutions in $\partial B_{r_2}(\lambda_j)$ as $(z-u)\prod_{i=1}^d(u-\lambda_i)=0$, i.e.~one solution, which we record by $u_t^j(z)$ for each $j=1,\dots,d$, respectively. Hence, for all $z\in D_2:=\{z\in\C:|z|<C_{\max}, \min_{j\le d+1}|z-\lambda_j|>c_2\sqrt t\}\setminus B$, $|u_t^j(z)-\lambda_j|<\frac{c_2\sqrt t}{2}$. Combining the results above, we obtain that for all $z$ within simply connected domains of $D_1\cap D_2$
\begin{align}
\label{eq:sp_dist}
|u_t^{d+1}(z)-z|<\frac{c_3\sqrt t}{2}\quad \text{and}\quad |u_t^j(z)-\lambda_j|<\frac{c_3\sqrt t}{2},
\end{align}
where $c_3:=\max(c_1, c_2)$ and $t<t_0:= \min(t_1, t_2)$.
It remains to show that $u_t^{d+1}(z)$ is, in fact, the maximally relevant saddle point.

First, we shall argue why  $u_t^{d+1}(z)$ is not irrelevant.   As we have done in the proof of Theorem \ref{thm:large_t}, we shall find a barrier that prevents the components $\Sigma^-(h)$ and $\Sigma^+(h)$ to be connected at heights $h<G(z,u_t^{d+1}(z))$. Once again, we would like to use convexity of the function $x\mapsto G(z,u_t^{d+1}(z)+x)$, but in this case the line $u_t^{d+1}(z)+\R$ may run through singularities $\lambda_j$. Enlarging $c_3$ to our final constant $c:=10 c_3$, we first restrict ourselves to $x \in\R$ such that $|u_t^{d+1}(z)+x-z|< \frac{c }{2}\sqrt t$, which implies $|u_t^{d+1}(z)+x-\lambda_j|> \frac {c} 2\sqrt t$ since $|z-\lambda_j|>c \sqrt t$ by assumption. Analogously to \eqref{eq:convexity2}, we obtain
 
\begin{align}\label{eq:convexity_small_t}
	\frac{\partial^2}{\partial x^2}G(z,u_t^{d+1}(z)+x) 
	\ge \frac 1 t -\frac 1 \alpha \sum_{j=1}^d \frac{\alpha_j}{|u_t^{d+1}(z)+x-\lambda_j|^2}
	\ge \frac 1 t -\frac 1 \alpha \sum_{j=1}^d \frac{\alpha_j}{{c}^2/4 t}
	= \frac 1 t-\frac {4} {{c}^2t}>0,
\end{align}
since $c>2$.
On the other hand, we claim that this barrier can be extended through all of 
\begin{align}\label{eq:V_1}
	 V_1\coloneq \{u\in\C: |\Re(z-u)| >2|\Im(z-u)| \}\setminus (\bigcup_j B_{c\sqrt t/2}(\lambda_j)\cup B_{c\sqrt t/2}(z)),
	\end{align}
	wherein the heights of $G(z,u)$ turn out to be large.
First, note that the excluded balls are disjoint and that $u\in V_1$ implies 
 \begin{align}\label{eq:Re25}
 \Re((z-u)^2)= (\Re(z-u))^2-(\Im (z-u))^2> \frac 1 2 (\Re(z-u))^2 >\frac 2 5 |z-u|^2.
 \end{align}

Applying \eqref{eq:sp_dist}, \eqref{eq:V_1}, and \eqref{eq:Re25}, it follows
\begin{align}
G(z,u)-G(z,u_t^{d+1}(z))&= -\sum_{j=1}^d \frac{\alpha_j}{\alpha}\log\left|\frac {u_t^{d+1}(z)-\lambda_j}{u-\lambda_j}\right| + \Re\left(\frac{(z-u)^2}{2t} \right)- \Re\left(\frac{(z-u_t^{d+1}(z))^2}{2t} \right)\nonumber \\
&>- \sum_{j=1}^d\frac{\alpha_j}{\alpha} \log\left|1+\frac{u_t^{d+1}(z)-u}{u-\lambda_j}\right| + \frac{|z-u|^2}{5t}  -\frac {c_3^2} 8 \nonumber\\
&\ge - \sum_{j=1}^d\frac{\alpha_j}{\alpha} \left( \left|\frac{u_t^{d+1}(z)-z}{u-\lambda_j}\right|+\left|\frac{z-u}{u-\lambda_j}\right| \right)+ \frac{c |z-u| }{10\sqrt t}   -\frac {c^2} {800} \nonumber\\
&\ge  - 1 +\frac{|z-u|}{\sqrt t}\left(-\frac{2}{c}+\frac{c}{10} \right)  -\frac {c^2} {800}>-2+\frac {c^2}{20}-\frac {c^2}{800}>0, \label{eq:V_1_barrier}
\end{align}
where we used that $\left(-\frac{2}{c}+\frac{c}{10} \right)>0$ in the last line.
In conclusion, we can construct a barrier through $u=u_t^{d+1}(z)$ within $V_1\cup B_{c\sqrt t /2}(z)$ whose height is larger than $G(z,u_t^{d+1}(z))$, and hence $u_t^{d+1}(z)$ cannot be irrelevant.

It remains to check that at all heights $h>G(z,u_t^{d+1}(z))$, there exists a path $\gamma$ connecting $\Sigma^+(h)$ and $\Sigma^-(h)$. The arguments follow the same lines as above, with the roles of real and imaginary directions essentially reversed. First, we can take the same route as done in \eqref{eq:convexity_small_t} to prove concavity along $y\mapsto G(z,u_t^{d+1}(z)+iy)$ for $|u_t^{d+1}(z)+iy-z|< \frac c 2 \sqrt t$. Indeed, since $\frac{\partial^2}{\partial y^2}\Re(g) =-\Re( \partial_u^2 g)$, we have 
\begin{align*}
	\frac{\partial^2}{\partial y^2}G(z,u_t^{d+1}(z)+iy) 
\le -\frac 1 t +\frac 1 \alpha \sum_{j=1}^d \frac{\alpha_j}{|u_t^{d+1}(z)+iy-\lambda_j|^2}
\le -\frac 1 t +\frac 1 \alpha \sum_{j=1}^d \frac{\alpha_j}{{c}^2/4t}
= -\frac 1 t+\frac {4} {{c}^2t}<0.
\end{align*}
On the other hand, $G(z,u)<G(z,u_t^{d+1}(z))$ within
$$V_2\coloneq \{u\in\C: |\Im(z-u)|> 2 |\Re(z-u)|\}\setminus B_{c\sqrt t /2}(z)$$

since here we have $(\Re(z-u)^2)<-\frac 1 2 (\Im(z-u))^2\le -\frac 2 5 |z-u|^2$ as in \eqref{eq:Re25} and therefore
\begin{align*}
	G(z,u)-G(z,u_t^{d+1}(z))&= \sum_{j=1}^d \frac{\alpha_j}{\alpha}\log\left|\frac {u-\lambda_j}{u_t^{d+1}(z)-\lambda_j}\right| + \Re\left(\frac{(z-u)^2}{2t} \right)- \Re\left(\frac{(z-u_t^{d+1}(z))^2}{2t} \right) \\
	&<  \sum_{j=1}^d\frac{\alpha_j}{\alpha} \log\left|1+\frac{u-u_t^{d+1}(z)}{u_t^{d+1}(z)-\lambda_j}\right| - \frac{|z-u|^2}{5t}  +\frac {c_3^2} 8 \nonumber\\
	&\le  \sum_{j=1}^d\frac{\alpha_j}{\alpha} \left( \left|\frac{u_t^{d+1}(z)-z}{u_t^{d+1}(z)-\lambda_j}\right|+\left|\frac{z-u}{u_t^{d+1}(z)-\lambda_j}\right| \right)- \frac{c |z-u| }{10\sqrt t} +\frac {c^2} {800} \nonumber\\
	&\le  1 +\frac{|z-u|}{\sqrt t}\left( \frac{2}{c}-\frac{c}{10} \right) +\frac {c^2} {800}< 0 \nonumber,
\end{align*}
since $\left( \frac{2}{c}-\frac{c}{10} \right)<0$.
Hence, there is a path $\gamma$ within $V_2\cup B_{c\sqrt t /2}$ and connecting $\Sigma^+(h)$ and $\Sigma^-(h)$ as claimed.

In particular, we have shown that $u_t^{d+1}(z)$ must be maximally relevant. Further, it is locally analytic by the implicit function theorem, as mentioned above, as long as we keep a distance from the branching points, which we excluded by assumption. As the real part of a locally analytic function, $G(z,u_t^{d+1}(z))$ is locally harmonic, as is $U_t$ on $D$, which finishes the proof.
\end{proof}

Additionally, we need the following lemma, which quantitatively locates the saddle points as multivalued functions.
\begin{lemma}\label{lem:u_t_small_t}
	For any fixed $z\in\C\setminus B$ and $t\to 0$, there exists one saddle point that converges to $\lambda_j$ for each initial zero $\lambda_1,\dots,\lambda_d$ of $P(z)$, and one saddle point that converges to $z$. For fixed $\tilde z\in\C$ and $j\le d$, set $z=\lambda_j+\tilde z \sqrt{\frac{t\alpha_j}{\alpha}}$ and assume $z\in\C\setminus B$ for all sufficiently small $t$. Then there exist branches of the saddle points satisfying
	\begin{align}
		u_t^{d+1}(z)&= \lambda_j + \sqrt{\frac{t\alpha_j}{\alpha}}
		u^+(\tilde z)+o(\sqrt t)\label{eq:ut_quant_d+1}, \\
		u_t^j(z)&= \lambda_j + \sqrt{\frac{t\alpha_j}{\alpha}}
		u^-(\tilde z)+o(\sqrt t) \label{eq:ut_quant_j}, \\
		u_t^k(z)&=\lambda_k+\frac{\alpha_k t}{\alpha (\lambda_j-\lambda_k)}\quad \text{for }k\neq j, d+1,\label{eq:ut_quant_notj}
	\end{align}
	where $u^{\pm}(\tilde z)=\frac 1 2(\tilde z \pm \sqrt{\tilde z^2 -4})$ are defined in \eqref{eq:u_pm}.
\end{lemma}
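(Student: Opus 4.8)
The plan is to reduce everything to Rouch\'e's theorem applied on suitable scales. The qualitative claim -- that for fixed $z\in\C\setminus B$ there is, once $t$ is small, exactly one saddle point near $z$ and one near each $\lambda_k$, all converging to the respective points as $t\to 0$ -- is the special case of the Rouch\'e argument inside the proof of Lemma \ref{lem:small_t} with $z$ held fixed (such a $z$ lies in the domain $D$ of that lemma once $t$ is small, and the radii of the Rouch\'e circles around $z$ and around the $\lambda_k$ may be let shrink to $0$). So the work lies in the quantitative expansions, for which I would zoom in at $\lambda_j$ on the scale $\sqrt{t\alpha_j/\alpha}$ and apply Rouch\'e a second time in the rescaled variable.

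Concretely, for \eqref{eq:ut_quant_d+1} and \eqref{eq:ut_quant_j} I substitute $u=\lambda_j+w\sqrt{t\alpha_j/\alpha}$ and $z=\lambda_j+\tilde z\sqrt{t\alpha_j/\alpha}$ into the saddle point equation \eqref{eq:sp_eqn_2}; subtracting $\lambda_j$, dividing by $\sqrt{t\alpha_j/\alpha}$, and using $\frac t\alpha\cdot\frac{\alpha_j}{w\sqrt{t\alpha_j/\alpha}}\cdot\frac1{\sqrt{t\alpha_j/\alpha}}=\frac1w$, it becomes
\begin{align*}
w+\frac1w+\sqrt{\tfrac{t}{\alpha\alpha_j}}\sum_{k\neq j}\frac{\alpha_k}{\lambda_j-\lambda_k+w\sqrt{t\alpha_j/\alpha}}=\tilde z .
\end{align*}
For fixed $\tilde z$ and $w$ ranging over a fixed compact subset of $\C\setminus\{0\}$, the sum stays bounded (the $\lambda_k$ are distinct and $w\sqrt{t\alpha_j/\alpha}\to0$), so the left-hand side converges uniformly as $t\to0$ to the Joukowsky map $w\mapsto w+1/w$, whose preimage of $\tilde z$ is $\{u^+(\tilde z),u^-(\tilde z)\}$ from \eqref{eq:u_pm}.

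Now a second application of Rouch\'e in the variable $w$: if $u^+(\tilde z)\neq u^-(\tilde z)$, then for all small enough $\delta>0$ one has $|w+1/w-\tilde z|\ge c(\delta,\tilde z)>0$ on $\partial B_\delta(u^\pm(\tilde z))$, while the remaining term above is $O(\sqrt t)$ uniformly there, hence below $c(\delta,\tilde z)$ for $t$ small; so the rescaled equation has exactly one root $w_t^\pm=u^\pm(\tilde z)+o(1)$ in $B_\delta(u^\pm(\tilde z))$, and letting $\delta\downarrow0$ gives $w_t^\pm\to u^\pm(\tilde z)$. Undoing the substitution and labelling the two saddle points collapsing onto $\lambda_j$ accordingly yields \eqref{eq:ut_quant_d+1} and \eqref{eq:ut_quant_j}. (In the degenerate case $u^+(\tilde z)=u^-(\tilde z)$, i.e.\ $\tilde z=\pm2$, one instead compares with $(w\mp1)^2$ on a single circle $\partial B_\delta(\pm1)$, getting two roots that are both $\pm1+o(1)$; both displayed formulas then reduce to the common leading term $\lambda_j+\sqrt{t\alpha_j/\alpha}\,(\pm1)+o(\sqrt t)$.) For the remaining $d-1$ saddle points, fix $k\neq j$, write $u=\lambda_k+v$, and isolate the singular term in \eqref{eq:sp_eqn_2} to get the fixed-point form $v=\frac{t\alpha_k}{\alpha}\big(z-\lambda_k-v-\frac t\alpha\sum_{l\neq k}\frac{\alpha_l}{\lambda_k-\lambda_l+v}\big)^{-1}$; on $\{|v|\le t^{3/4}\}$ this is a contraction for $t$ small, since the bracket equals $z-\lambda_k=\lambda_j-\lambda_k+O(\sqrt t)\neq0$, and its fixed point satisfies $v=\frac{t\alpha_k}{\alpha(z-\lambda_k)}+O(t^2)=\frac{\alpha_k t}{\alpha(\lambda_j-\lambda_k)}+o(t)$, which is \eqref{eq:ut_quant_notj} (equivalently, Rouch\'e on $\partial B_\rho\big(\lambda_k+\frac{\alpha_k t}{\alpha(\lambda_j-\lambda_k)}\big)$ with $t^{3/2}\ll\rho\ll t$).

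The genuine work, and essentially the only place care is needed, is the bookkeeping of the three scales in play -- $1$ for the $\lambda_k$ with $k\neq j$, $\sqrt t$ for the two saddle points collapsing onto $\lambda_j$, and $t$ for the correction term in \eqref{eq:ut_quant_notj} -- together with checking that each Rouch\'e estimate is uniform on its contour. The one conceptual subtlety is the degenerate value $\tilde z=\pm2$, at which the two rescaled saddle points merge (precisely the regime in which $z$ approaches a branch point), handled as indicated above. Pinning down which of the two branches $u^\pm(\tilde z)$ should carry the label $u_t^{d+1}$ -- i.e.\ which is maximally relevant on the zoomed scale -- is not part of this lemma and would be settled, consistently with Corollary \ref{cor:density} and the branch convention of Definition \ref{def:u_pm}, in the proof of Theorem \ref{thm:small_t}.
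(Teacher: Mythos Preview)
Your proposal is correct and follows essentially the same approach as the paper: the same rescaling $u=\lambda_j+w\sqrt{t\alpha_j/\alpha}$ leading to the perturbed Joukowsky equation, and the same qualitative identification of the far saddle points via Lemma~\ref{lem:small_t}. You supply more detail than the paper does---the explicit second Rouch\'e step in the rescaled variable~$w$, the contraction argument for~\eqref{eq:ut_quant_notj}, and the degenerate case $\tilde z=\pm 2$---whereas the paper simply asserts the asymptotics $\tilde u(\tilde z)=u^\pm(\tilde z)+o(1)$ once the limiting equation is recognized, and derives~\eqref{eq:ut_quant_notj} by reading off which summand must cancel the constant term $\lambda_j$ in the saddle point equation.
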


\begin{proof}
For $z\in  D= \{z\in\C\setminus B:\,|z|<C_{\max}, \min_{j\le d+1}|z-\lambda_j|>c\sqrt t\}$, the first claim follows from \eqref{eq:sp_dist} as defined in Lemma \ref{lem:small_t}. On the other hand, if $z\in B_{c\sqrt t }(\lambda_j)\setminus B$, then $w:=\lambda_j+ 2c \sqrt t\in D$ is $\mathcal O (\sqrt t )$-close to $z$, and there exists a small simply connected domain $D_w \subseteq \C\setminus B$ such that $w\in D_w$. Locally, in $D_w$, we can fix a numbering of the branches and continuity ensures that $u_t^k(z)=u_t^k(w)+\mathcal O (\sqrt t)\to \lambda_k$ for $k\le d$ and $u_t^{d+1}(z)=u_t^{d+1}(w)+\mathcal O (\sqrt t)\to z$. This proves the qualitative statement.

Now, fix $j\le d$,  let us write $z=\lambda_j+\tilde z \sqrt{\frac{t\alpha_j}{\alpha}}$ for some $\tilde z\in\C$ and  $
\tilde u(\tilde z)\coloneq \sqrt{\frac {\alpha}{t\alpha_j}}(u_t^k(z)-\lambda_j)$ for $u_t^k(z)$ being one of the branches converging to $\lambda_j$. Inserting $\tilde u(\tilde z)$ into the saddle point equation $u-z+\frac t \alpha \sum_i\frac{\alpha_i}{u-\lambda_i}=0$ yields
\begin{align*}
	&0=u_t^k(z)-\tilde z \sqrt{\frac {t\alpha_j}{\alpha}}-\lambda_j+\frac {t}{\alpha}\sum_i \frac{\alpha_i}{u_t^k(z)-\lambda_i} \\
	\iff &0=\tilde u (\tilde z)-\tilde z+\frac{1}{\tilde u(\tilde z)}+\frac{\sqrt t}{\sqrt{\alpha\alpha_j}} \sum_{i\neq j}\frac{\alpha_i}{u_t^k(z)-\lambda_i}
	=\tilde u (\tilde z)-\tilde z+\frac{1}{\tilde u(\tilde z)}+\mathcal O(\sqrt t)
\end{align*}
as $t\to 0$. Hence, we recover the Joukowsky transform $\tilde z\approx \tilde u (\tilde z)+\frac{1}{\tilde u(\tilde z)}$, which has exactly two solutions in $\tilde u$, namely $u^\pm(\tilde z)=\frac 1 2(\tilde z \pm \sqrt{\tilde z^2 -4})$ as defined in \eqref{eq:u_pm}. In particular, we obtain the asymptotics $\tilde u(\tilde z)=u^\pm(\tilde z)+o(1)$ and both equations, \eqref{eq:ut_quant_d+1} and \eqref{eq:ut_quant_j} follow. On the other hand, for the branches $u^k_t$ that do not converge to the fixed $\lambda_j$, it holds
$$u_t^k(z)=\tilde z \sqrt{\frac {t\alpha_j}{\alpha}}+\lambda_j-\frac {t}{\alpha}\sum_i \frac{\alpha_i}{u_t^k(z)-\lambda_i}\to \lambda_k. $$
This can only be true if one summand $i=k$ cancels the constant term $\lambda_j$ with the leading asymptotic as claimed in \eqref{eq:ut_quant_notj}.
\end{proof}

\begin{lemma}\label{lem:simple_B}
	Fix $j\le d$, let $r >2\sqrt{\alpha_j/\alpha}$, and $t>0$ be sufficiently small. Then we have exactly two simple branching points in $B\cap B_{ {r \sqrt t}}(\lambda_j)$.

\end{lemma}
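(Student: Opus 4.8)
The plan is to reduce the statement to locating the critical points of the saddle-point map
\begin{align*}
F_t(u):=u+\frac{t}{\alpha}\sum_{i=1}^d\frac{\alpha_i}{u-\lambda_i},
\end{align*}
i.e.\ the right-hand side of the saddle point equation \eqref{eq:sp_eqn_1}, written as $z=F_t(u)$. Recall from \eqref{eq:polynomial Q} and \eqref{eq:sp_eqn_2} that $Q_{z,t}(u)=t\,\partial_u g(z,u)\prod_{i=1}^d(u-\lambda_i)$ and that no $\lambda_i$ is a root of $Q_{z,t}$; hence $z\in B$ if and only if there is some $u_0\notin\{\lambda_1,\dots,\lambda_d\}$ with $\partial_u g(z,u_0)=\partial_u^2 g(z,u_0)=0$, which is the same as $z=F_t(u_0)$ and $F_t'(u_0)=0$. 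Moreover the multiplicity of such $u_0$ as a root of $Q_{z,t}$ equals the vanishing order of $\partial_u g(z,\cdot)$ at $u_0$, so $z$ is a \emph{simple} branch point (ramification index two) precisely when in addition $\partial_u^3 g(z,u_0)=F_t''(u_0)/t\neq 0$. It therefore suffices to find all critical points of $F_t$ near $\lambda_j$, compute their $F_t$-images, and check that $F_t''$ does not vanish there.

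First I would localize the critical points, reusing the Rouch\'e technique from the proofs of Lemmas \ref{lem:small_t} and \ref{lem:u_t_small_t}. Since $F_t'(u)=1-\frac{t}{\alpha}\sum_i\frac{\alpha_i}{(u-\lambda_i)^2}$ satisfies $|F_t'(u)-1|<1$ for small $t$ whenever $\min_i|u-\lambda_i|$ is bounded below, all $2d$ zeros of $F_t'$ lie in $\bigcup_k B_\eps(\lambda_k)$. Rescaling near a fixed $\lambda_k$ via $u=\lambda_k+\sqrt{t\alpha_k/\alpha}\,v$ turns $F_t'(u)=0$ into $v^{-2}=1+\mathcal O(t)$, and Rouch\'e's theorem yields exactly two critical points $u_t^{k,\pm}$ in $B_{c\sqrt t}(\lambda_k)$, with $u_t^{k,\pm}=\lambda_k\pm\sqrt{t\alpha_k/\alpha}+o(\sqrt t)$; since $\pm 1$ stay bounded apart, these are simple zeros of $F_t'$. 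By the degree count ($\deg F_t'\cdot\text{(denominator)}=2d$) this exhausts all critical points of $F_t$.

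Next I would compute the images. For $u=u_t^{k,\pm}$ one has $\frac{t}{\alpha}\frac{\alpha_k}{u-\lambda_k}=\pm\sqrt{t\alpha_k/\alpha}\,(1+o(1))$ while $\frac{t}{\alpha}\sum_{i\neq k}\frac{\alpha_i}{u-\lambda_i}=\mathcal O(t)$, so
\begin{align*}
z_t^{k,\pm}:=F_t(u_t^{k,\pm})=\lambda_k\pm 2\sqrt{t\alpha_k/\alpha}+o(\sqrt t).
\end{align*}
For $k=j$ and $t$ small we get $|z_t^{j,\pm}-\lambda_j|=2\sqrt{t\alpha_j/\alpha}+o(\sqrt t)<r\sqrt t$, because $r>2\sqrt{\alpha_j/\alpha}$, so $z_t^{j,+},z_t^{j,-}\in B_{r\sqrt t}(\lambda_j)$, and they are distinct since $z_t^{j,+}-z_t^{j,-}=4\sqrt{t\alpha_j/\alpha}\,(1+o(1))$. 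For $k\neq j$ we get $|z_t^{k,\pm}-\lambda_j|\ge|\lambda_j-\lambda_k|-o(1)>r\sqrt t$ for $t$ small, so no other branch point enters $B_{r\sqrt t}(\lambda_j)$. Hence $B\cap B_{r\sqrt t}(\lambda_j)=\{z_t^{j,+},z_t^{j,-}\}$ consists of exactly two points, each of which is the $F_t$-image of a single critical point of $F_t$.

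Finally, simplicity follows from $F_t''(u)=\frac{2t}{\alpha}\sum_i\frac{\alpha_i}{(u-\lambda_i)^3}$: at $u=u_t^{j,\pm}$ the $i=j$ term equals $\pm 2\,(t\alpha_j/\alpha)^{-1/2}(1+o(1))$, which dominates the $\mathcal O(t)$ contribution of the remaining terms, so $F_t''(u_t^{j,\pm})\neq 0$ for $t$ small; combined with the uniqueness of the critical point lying over $z_t^{j,\pm}$, this shows that $Q_{z_t^{j,\pm},t}$ has a unique multiple root, of multiplicity exactly two, i.e.\ $z_t^{j,\pm}$ is a simple branch point, which finishes the proof. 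The only genuine work is the bookkeeping of the Rouch\'e estimates so that the localizations of all $2d$ critical points and of their images are simultaneously valid for one and the same small $t$; the hypothesis $r>2\sqrt{\alpha_j/\alpha}$ is exactly what is needed to make both branch points emanating from $\lambda_j$ fit inside $B_{r\sqrt t}(\lambda_j)$.
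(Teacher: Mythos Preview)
Your proof is correct and takes a genuinely different route from the paper's argument. The paper works in the $z$-variable: it considers the discriminant $R(z)=\prod_{i<k}(u_t^i(z)-u_t^k(z))^2$, expands it near $\lambda_j$ using the saddle point asymptotics of Lemma~\ref{lem:u_t_small_t}, and compares it via Rouch\'e to $V(z)^2$ with $V(z)=\prod_{i<k}(\lambda_i-\lambda_k)\prod_k(z-\lambda_k)$, concluding that $R$ has two zeros in $B_{r\sqrt t}(\lambda_j)$; simplicity is then read off from the fact (again from Lemma~\ref{lem:u_t_small_t}) that only two branches can coalesce there. You work instead in the $u$-variable: you parametrize $B$ as the $F_t$-image of the critical set of $F_t$, locate all $2d$ critical points directly by a local rescaling plus Rouch\'e, compute their images explicitly as $\lambda_k\pm 2\sqrt{t\alpha_k/\alpha}+o(\sqrt t)$, and verify simplicity via $F_t''\neq 0$ together with the observation that these $2d$ images are pairwise distinct.

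Both arguments are of comparable length. Your approach is somewhat more self-contained (it does not invoke the quantitative estimates of Lemma~\ref{lem:u_t_small_t}) and yields the locations of the branch points as an immediate by-product; the paper's approach fits more seamlessly into the surrounding narrative since the needed saddle point asymptotics are already available. One small point worth making explicit in your write-up: when you assert uniqueness of the critical point lying over each $z_t^{j,\pm}$, you are using that \emph{all} $2d$ images $z_t^{k,\sigma}$ are pairwise distinct for small $t$ (not just those falling inside $B_{r\sqrt t}(\lambda_j)$), which follows from your asymptotics since the $\lambda_k$ are distinct and $z_t^{k,+}-z_t^{k,-}\sim 4\sqrt{t\alpha_k/\alpha}$.
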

\begin{proof}
	Let us first locate the branching points, which are the zeros of the (signless) discriminant
	\begin{align*}
		R(z):=\prod_{1\le i< k\le d+1}(u_t^i(z)-u_t^k(z) )^2,
	\end{align*}
	see also the proof of Lemma \ref{lem:branching}. Note that this (commutative) product and the subsequent formulas do not depend on the actual choice of numbering the branches $u_t^1,\dots,u_t^{d+1}$. Define the polynomial $$V(z):=\prod_{i<k}(\lambda_i-\lambda_k) \prod _{k=1}^d(z-\lambda_k),$$
	which has our fixed roots $\lambda_k$ and is rescaled by a Vandermonde factor.
	By Lemma \ref{lem:u_t_small_t}, we have $R(z)\to V(z)^2$ as $t\to 0$. More precisely, we split the product of $R$ into the cases $i<k\le d$ and $i\neq j, k=d+1$ and $i=j, k=d+1$, then the quantitative estimates \eqref{eq:ut_quant_d+1}, \eqref{eq:ut_quant_j}, and \eqref{eq:ut_quant_notj} imply for $z=\lambda_j + \tilde z \sqrt{\frac {t\alpha_j} {\alpha}}$
	\begin{align*}
		R(z)&=\prod_{i<k\le d}(\lambda_i-\lambda_k+\mathcal O (\sqrt t))^2\prod_{i\neq j}(\lambda_i-\lambda_j+\mathcal O (\sqrt t))^2\cdot \Big( \sqrt{\frac{t\alpha_j}{\alpha}}(u^+(\tilde z)-u^-(\tilde z))\Big)^2\\
		&=\prod_{i<k}(\lambda_i-\lambda_k)^2 \prod _{i\neq j}^d(\lambda_j-\lambda_i)^2\cdot \frac{t\alpha_j}{\alpha}\cdot \Big(\frac 1 2\sqrt{\tilde z^2-4}\Big)^2 +\mathcal O(t^{3/2}).
	\end{align*}
	Similarly,
	\begin{align*}
		V(z)^2=\prod_{i<k}(\lambda_i-\lambda_k)^2 \prod _{k\neq j}^d(\lambda_j-\lambda_k)^2\cdot \frac{t\alpha_j}{\alpha} \tilde z ^2 +\mathcal O(t^{3/2}).
	\end{align*}
	If $|\tilde z|^2>4$, then $|\tilde z ^2-\tilde z^2/4+1|\le \tfrac 3 4 |\tilde z|^2+1<|\tilde z|^2$, and it follows
	\begin{align*}
		|V(z)^2-R(z)|= \prod_{i<k}|\lambda_i-\lambda_k|^2 \prod _{k\neq j}^d|\lambda_j-\lambda_k|^2\cdot \frac{t\alpha_k}{\alpha} |\tilde z ^2-\tfrac 1 4\tilde z ^2 +1| +\mathcal O(t^{3/2})<|V(z)|^2
	\end{align*}
	for sufficiently small $t$ and all $z=\lambda_j + \tilde z \sqrt{\frac {t\alpha_j} {\alpha}}\in\partial B_{r\sqrt t}(\lambda_j)$. By Rouch\'e's theorem, $R$ has the same number of zeros in $B_{r\sqrt t}(\lambda_j)$ as $V^2$, that is two.
	
	Again by Lemma \ref{lem:u_t_small_t}, we know that precisely two (and not more) branches meet at each of the two branching points $ z_0\in B\cap B_{r\sqrt t}(\lambda_j)$. Hence, both these branching points are simple (of order two) and the monodromy action induced by analytically continuing around such a branching point corresponds to a transposition in the monodromy group, exchanging the two coalescing branches.
\end{proof}

With the asymptotics of Lemma \ref{lem:u_t_small_t} at hand, we can now identify the maximally relevant saddle point close to $\lambda_j$.

\begin{lemma}\label{lem:ut*_small_t}
	For $c>0$ as in Lemma \ref{lem:small_t}, and $z\in B_{{c\sqrt t}}(\lambda_j)\setminus B$, we have $u_t^*(z)\to\lambda_j$ as $t\to 0$.
\end{lemma}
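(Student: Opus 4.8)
The plan is to reduce the statement to showing that, for $z\in B_{c\sqrt t}(\lambda_j)\setminus B$ and $t$ small, the maximally relevant saddle point $u_t^*(z)$ coincides with one of the two saddle points $u_t^{d+1}(z),u_t^j(z)$ clustering at $\lambda_j$. This suffices: by the qualitative part of Lemma~\ref{lem:u_t_small_t} — or by applying \eqref{eq:sp_dist} at a point $w\in D$ with $|w-\lambda_j|=\mathcal O(\sqrt t)$ and continuing the branches analytically to $z$ — both $u_t^{d+1}(z)$ and $u_t^j(z)$ lie within $\mathcal O(\sqrt t)$ of $\lambda_j$, uniformly over $z\in B_{c\sqrt t}(\lambda_j)$, so either choice tends to $\lambda_j$ as $t\to0$.

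For the reduced claim I would argue by propagation from the boundary of a slightly larger disc. Fix $\rho:=\tfrac32 c\sqrt t$; the circle $\partial B_\rho(\lambda_j)$ lies in the domain $D$ of Lemma~\ref{lem:small_t} once $t$ is small, since the other roots are at distance $\ge\tfrac12\delta(\lambda)\gg\rho$, and by Lemma~\ref{lem:simple_B} the two branch points of $B$ inside $B_{2c\sqrt t}(\lambda_j)$ sit at distance $(2+o(1))\sqrt{t\alpha_j/\alpha}<c\sqrt t<\rho$ from $\lambda_j$. Hence Lemma~\ref{lem:small_t} identifies $u_t^*$ on $\partial B_\rho(\lambda_j)$ with the saddle point at distance $<\tfrac c2\sqrt t$ from $z$; the only saddle points that close to $\lambda_j$ being $u_t^{d+1}$ and $u_t^j$, this is one of $u_t^{d+1}(z),u_t^j(z)$. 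Given an arbitrary $z_0\in B_{c\sqrt t}(\lambda_j)\setminus B$, I would join it to $\partial B_\rho(\lambda_j)$ by a path inside $B_{2c\sqrt t}(\lambda_j)\setminus B$ and apply Lemma~\ref{lem:sp_nbh}: the branch realising $u_t^*$ is locally constant and can change only where the path crosses $\mathcal D_t^c$, i.e.\ at a $z$ with $G(z,u_t^i(z))=G(z,u_t^m(z))$ for the branch $u_t^i$ currently realising $u_t^*$ and some other branch $u_t^m$. (The unordered pair $\{u_t^{d+1}(z),u_t^j(z)\}$ is well defined throughout $B_{2c\sqrt t}(\lambda_j)\setminus B$ despite non-simple-connectivity, because by Lemma~\ref{lem:simple_B} the monodromy around the two branch points only transposes these two branches.) Everything thus reduces to excluding a switch from $\{u_t^{d+1},u_t^j\}$ to a distant saddle $u_t^m$ with $m\ne j$.

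I would exclude this by a height comparison. Using \eqref{eq:ut_quant_notj}, $u_t^m(z)=\lambda_m+\tfrac{\alpha_m t}{\alpha(\lambda_j-\lambda_m)}+o(t)$, so uniformly for $z\in B_{2c\sqrt t}(\lambda_j)$ one has $G(z,u_t^i(z))=\tfrac{\alpha_j}{2\alpha}\log t+\mathcal O(1)$ for $i\in\{d+1,j\}$ (since $|u_t^i(z)-\lambda_j|=\mathcal O(\sqrt t)$ and $|z-u_t^i(z)|=\mathcal O(\sqrt t)$), while $G(z,u_t^m(z))=\tfrac{1}{2t}\Re\bigl((\lambda_j-\lambda_m)^2\bigr)+\tfrac{\alpha_m}{\alpha}\log t+\mathcal O(t^{-1/2})$. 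When $\Re((\lambda_j-\lambda_m)^2)\ne0$ the latter is of order $t^{-1}$ and cannot match the former for small $t$, so no such switch occurs; hence $u_t^*(z)\in\{u_t^{d+1}(z),u_t^j(z)\}$ on all of $B_{2c\sqrt t}(\lambda_j)\setminus B$, and with the first paragraph this gives $u_t^*(z)\to\lambda_j$. The main obstacle is the borderline configuration $\Re((\lambda_j-\lambda_m)^2)=0$, where the two heights can be comparable. For this I would go back to Lemma~\ref{lem:saddle}(3), by which $u_t^*(z)$ is the saddle at which the components $\Sigma^+(h),\Sigma^-(h)$ carrying $\pm i\infty$ first merge, and combine two facts: (a) for $m\ne j$ the point $u_t^m(z)$ is the \emph{unique} saddle of $u\mapsto G(z,u)$ in a fixed neighbourhood of $\lambda_m$, so the merger it produces only attaches the component of $\Sigma(h)$ containing $\lambda_m$ to one neighbour — a component containing at most one of $\pm i\infty$ beforehand — so $u_t^m(z)$ is never the saddle first joining $\Sigma^+$ and $\Sigma^-$; and (b) below height $\approx\tfrac{\alpha_j}{2\alpha}\log t$ the components $\Sigma^+,\Sigma^-$ do not connect at all, which I would obtain by confining any low-height path from $+i\infty$ to $-i\infty$ to the vertical cone $\{\Re((z-u)^2)<0\}$ about $z\approx\lambda_j$ (outside it $\Re((z-u)^2/2t)$ is large and positive) and then blocking it inside the window $|u-\lambda_j|=\mathcal O(\sqrt t)$ by the barrier of the monomial model (cf.\ Remark~\ref{rem:example_hermite} and the convexity arguments in Lemmas~\ref{lem:small_t} and~\ref{lem:large_t}). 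Making (b) rigorous — the rescaling to the monomial $G$-function being only locally uniform in $u$ — together with the geometry of the remaining roots is the technical heart; granting it, the first connector is among the $d+1$ saddles but none of the $u_t^m$, $m\ne j$, hence equals $u_t^{d+1}(z)$ or $u_t^j(z)$, which completes the proof.
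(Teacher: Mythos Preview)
Your overall strategy---anchor at a point on the boundary of a $\sqrt t$-disc around $\lambda_j$, identify the maximally relevant saddle there via Lemma~\ref{lem:small_t}, then propagate inward by Lemma~\ref{lem:sp_nbh} and a height comparison---is exactly the paper's. The substantive divergence is in your ``borderline'' case $\Re\bigl((\lambda_j-\lambda_m)^2\bigr)=0$, which you isolate as the main obstacle and attack with the topological arguments (a) and (b). The paper bypasses this case by a one-line algebraic trick you are missing.

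The paper anchors not at a generic boundary point but at the \emph{specific} point $w=\lambda_j+2c\sqrt t$, i.e.\ a real displacement so that $\tilde z=2c\sqrt{\alpha/\alpha_j}$ is real. Expanding one order further than you do,
\[
G\bigl(w,u_t^m(w)\bigr)=\frac{\Re\bigl((\lambda_j-\lambda_m)^2\bigr)}{2t}
+\sqrt{\frac{\alpha_j}{\alpha}}\,\frac{\tilde z\,\Re(\lambda_j-\lambda_m)}{\sqrt t}
+\frac{\alpha_m}{\alpha}\log t+\mathcal O(1).
\]
Now observe that for $\lambda_j\neq\lambda_m$, at least one of $\Re\bigl((\lambda_j-\lambda_m)^2\bigr)$ and $\Re(\lambda_j-\lambda_m)$ is nonzero: if $\Re(w^2)=0$ then $\arg w\in\{\pm\pi/4,\pm3\pi/4\}$, which forces $\Re w\neq0$. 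Since $\tilde z$ is real and nonzero, one of the two displayed leading terms survives and $|G(w,u_t^m(w))|\geq Ct^{-1/2}$, well separated from the $\sim\tfrac{\alpha_j}{2\alpha}\log t$ height of $u_t^j,u_t^{d+1}$. No separate treatment of the borderline configuration is needed; the careful choice of $w$ dissolves it.

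Your proposed workaround is shakier than you suggest. In (a), the assertion that the merger occurring at $u_t^m(z)$ necessarily involves the component containing $\lambda_m$ is not justified: a nondegenerate saddle joins two components of the sublevel set, but nothing in your argument rules out that both are among $\Sigma^\pm$ (uniqueness of the saddle near $\lambda_m$ does not by itself pin down which components it joins). Part (b) you already flag as the ``technical heart'' left unproved. So as written the borderline case is a genuine gap, and the paper's choice-of-anchor trick is the intended (and much shorter) fix.
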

\begin{proof}
First, note that for any $w=\lambda_j+ 2c \sqrt t\in \partial B_{2c\sqrt t}(\lambda_j)$ on the boundary of the ball with bigger radius, all saddle points $u_t^k(w)$ for $k\neq j, d+1$ cannot be maximally relevant by \eqref{eq:ut_quant_notj} and Lemma \ref{lem:small_t}, in which we proved that the maximally relevant saddle point lies close to $z$ (and, in this case, to $\lambda_j$).
Let $D_w\subseteq\C\setminus B$ be some simply connected domain containing $w$, which has non-empty intersection with $B_{2c\sqrt t}(\lambda_j)$. Choose a local numbering in each domain $D_w$, denoting by $u_t^j(z), u_t^{d+1}(z)$ the saddle points satisfying \eqref{eq:ut_quant_d+1} and \eqref{eq:ut_quant_j}, respectively. Note that this is possible, since $w\notin B$ for small $t$, since all branching points $B$ are inside $B_{r\sqrt t}(\lambda_j)$ for $r<2c$ by Lemma \ref{lem:simple_B}.
By Lemma \ref{lem:sp_nbh}, the branch of the maximally relevant saddle point may only change at some $z\in D_w$ to another saddle point reaching the same height. Suppose that we have
\begin{align*}
G(w, u_t^k(w))\sim \frac{\alpha_j}{2\alpha}\log t \quad &\text{ for } k=j, d+1\\
|G(w, u_t^k(w))|\ge C t^{-1/2} \quad &\text{ for } k\neq j, d+1
\end{align*}
as $t\to 0$ for some $C>0$. Then it would follow by continuity that $|G(z, u_t^k(z))-G(z, u_t^j(z))|>\eps$ and $|G(z, u_t^k(z))-G(z, u_t^{d+1}(z))|>\eps$ for all $k\neq j,d+1$, $z\in D_w$, and sufficiently small $t$ and $\eps$. In particular, inside $ B_{c\sqrt t}(\lambda_j)\setminus B$ no change of the maximally relevant branch to $u_t^k(z)$ for $k \neq j, d+1$ can occur, and hence the claim follows.

It remains to derive the asymptotics of $G(w,u_t^k(w))$ from those of $u_t^k$ given in Lemma \ref{lem:u_t_small_t}. Let $w=\lambda_j+2c\sqrt t=\lambda_j+\tilde z\sqrt{\frac{\alpha_j t}{\alpha}}$ such that $\tilde z =2c\sqrt{\frac{\alpha}{\alpha_j}}$, and let $k=j$ or $k=d+1$. Then it follows from \eqref{eq:ut_quant_d+1} and \eqref{eq:ut_quant_j} that
\begin{align}
	G(w, u_t^k(w))&=\frac {1} {\alpha}
	\sum_i \alpha_i \log|u_t^k(w)-\lambda_i|+\Re\left(\frac{(u_t^k(w)-w)^2}{2t}\right) \nonumber \\
	&\sim \frac 1 \alpha \sum_{i\neq j}\alpha_i \log|\lambda_j -\lambda_i|+\frac{\alpha_j}{\alpha}\log\left|\sqrt{\frac{t\alpha_j}{\alpha}}u^\pm(\tilde z)\right|+\Re\left(\frac{t\alpha_j}{\alpha}\frac{(\tilde z-u^\pm(\tilde z))^2}{2t}\right)\nonumber\\
	&=\frac 1 \alpha \sum_{i\neq j}\alpha_i\log|\lambda_j-\lambda_i|+\frac{\alpha_j}{\alpha}\left(\log\left|\sqrt{\frac{\alpha_j}{\alpha}}u^\pm(\tilde z)\right|+\frac 1 2 \log t\right)+\frac{\alpha_j}{2\alpha}\Re (u^\mp(\tilde z)^2)\label{eq:some_calculation} \\
	&\sim \frac{\alpha_j}{2\alpha}\log t.\nonumber
\end{align}
On the other hand, for $k\neq j, d+1$ we have
\begin{align*}
G(w, u_t^k(w))&=\frac 1 \alpha \sum_{i\neq k}\alpha_i \log|\lambda_k-\lambda_i|+\frac{\alpha_k}{\alpha}\log|u_t^k(w)-\lambda_k|+\Re\left(\frac{(w-\lambda_k-\frac{\alpha_k t}{\alpha (\lambda_j-\lambda_k)})^2}{2t}\right)\\
&\sim \frac 1 \alpha \sum_{i\neq k}\alpha_i \log|\lambda_k-\lambda_i|+\frac{\alpha_k}{\alpha}\log|t|+\Re\left(\frac{(\lambda_j+\sqrt{\frac{t\alpha j}{\alpha}}\tilde z-\lambda_k +\mathcal O(t))^2}{2t}\right)\\
&=\frac 1 \alpha \sum_{i\neq k}\alpha_i \log|\lambda_k -\lambda_i|+\frac{\alpha_k}{\alpha}\log |t|+\frac{\Re(\lambda_j-\lambda_k)^2}{2t}+\sqrt{\frac{\alpha_j}{\alpha}}\frac{\tilde z\Re(\lambda_j-\lambda_k)}{\sqrt t}+\mathcal O(1).
\end{align*}
In particular, since $\lambda_j\neq \lambda_k$, at least one of the two terms $\Re(\lambda_j-\lambda_k)^2$ and $\Re(\lambda_j-\lambda_k)$ is non-zero, hence $|G(w, u_t^k(w))|\geq C t^{-1/2}$ for some constant $C>0$, which finishes the proof.
\end{proof}

Finally, let us turn to the
\begin{proof}[Proof of Theorem \ref{thm:small_t}]
Let $z=\lambda_j + \tilde z \sqrt{\frac {t\alpha_j} {\alpha}}$ for fixed $\tilde z$, then $u_t^*(z)\sim \lambda_j + \sqrt{\frac {t\alpha_j} {\alpha}}u^\pm(\tilde z)$ by Lemma \ref{lem:u_t_small_t} and Lemma \ref{lem:ut*_small_t}. We claim that either $u^+(\tilde z)$ or $u^-(\tilde z)$ is the right choice of sign for the maximally relevant saddle point, depending on the location of $\tilde z$. As previously done in \eqref{eq:function_H} in the proof of Theorem \ref{thm:semicircle}, we consider the domain $D=B_{2-\eps}(0)$ for $\eps>0$. It holds
\begin{align*}
G\left(\lambda_j+\tilde z\sqrt{\frac {t\alpha_j}{\alpha}},\lambda_j+\sqrt{\frac {t\alpha_j}{\alpha}} u^+(\tilde z)\right)-G\left(\lambda_j+\tilde z\sqrt{\frac {t\alpha_j}{\alpha}},\lambda_j+\sqrt{\frac {t\alpha_j}{\alpha}} u^-(\tilde z)\right)\\
\sim \frac {\alpha_j}{\alpha} \left(\log \left|\frac 1 2 (\tilde z+\sqrt{\tilde z^2-4})\right|^2+\frac 1 2 \Re(-\tilde z\sqrt{\tilde z^2-4}) \right)=\frac {\alpha_j}{\alpha} H(\tilde z)
\end{align*}
and, as shown in \eqref{eq:function_H_derivative},
$$\frac{\partial}{\partial \tilde y}\frac{H(\tilde x +i\tilde y)}{2}=-\Im \partial_{\tilde z} H(\tilde z)=\frac 1 2 \Im(\sqrt{(\tilde x+i\tilde y)^2-4})>0.$$

As already mentioned above, $H(\tilde x)=0$ for all $\tilde x\in (-2,2)$, and by the same arguments presented in the proof of Theorem \ref{thm:semicircle}, the nodal set
$$\{\tilde z\in D:\,G(\lambda_j+\tilde z\sqrt{{t \alpha_j}/{\alpha}}, u_t^{d+1}(\lambda_j+\tilde z\sqrt{{t \alpha_j}/{\alpha}}))=G(\lambda_j+\tilde z\sqrt{{t \alpha_j}/{\alpha}}, u_t^j(\lambda_j+\tilde z\sqrt{{t \alpha_j}/{\alpha}}))\}$$
 is a single curve $\gamma_j$ within the strip $(-2+\varepsilon,2-\varepsilon)+i(-\delta/2,\delta/2)$.
 
 Also analogously to the proof of Theorem \ref{thm:semicircle}, we 
 determine the maximally relevant saddle point above and below the curve $\gamma_j$, for instance for $z=\lambda_j + \tilde z \sqrt{\frac {t\alpha_j} {\alpha}}$ with $\tilde z =i$.
We know that 
$G(z, u_t^{d+1}(z))>G(z, u_t^j(z))$
for those $\tilde z$ lying above $\gamma_j$. To show that $u_t^{d+1}(z)$ is not irrelevant, we claim that a path $\gamma$ connecting $-i\infty$ and $+i\infty$ at heights $h<G(z,u_t^{d+1}(z))$ is prevented by a barrier consisting of two parts. For the first part, one may take the same approach as in \eqref{eq:convexity2} and below to show convexity of $x\mapsto G(z,u_t^{d+1}(z)+x)$, which holds as long as $|x|<c\sqrt t$, with $c>0$ defined in Lemma \ref{lem:small_t}.

 Indeed, for $\sqrt t/2 <|x|<c\sqrt t$ we have
\begin{align*}
	\frac{\partial^2}{\partial x^2}G(z,u_t^{d+1}(z)+x) 
	\ge \frac 1 t -\frac 1 \alpha \sum_{k=1}^d \frac{\alpha_k}{|u_t^{d+1}(z)+x-\lambda_k|^2}
	=\frac 1 t -\frac {1} \alpha  \frac{\alpha_j}{t\frac{\alpha_j}\alpha +x^2 +o(t)} +\mathcal O(1)
	\ge \frac 1 {7t} >0
\end{align*}
where we used \eqref{eq:ut_quant_d+1} and $u^+(\tilde z)=u^+(i)\in i\R_+$. For $|x|<\sqrt t/2$, we have
\begin{align*} 
	\frac{\partial^2}{\partial x^2}G(z,u_t^{d+1}(z)+x) =\frac 1 t-\frac 1 \alpha \sum_{k=1}^d \alpha_k\frac{ (x+\Re u_t^{d+1}(z)-\Re \lambda_k)^2-(\Im u_t^{d+1}(z)-\Im \lambda_k)^2 }{| u_t^{d+1}(z)+x-\lambda_k|^4}>\frac 1 t >0
\end{align*}
for sufficiently small $t$.
On the other hand, this barrier $u_t^{d+1}(z)+[-c\sqrt t,c\sqrt t]$ can be extended to the left and right to $V_1$,
precisely as we have done in \eqref{eq:V_1} and \eqref{eq:V_1_barrier}. 
 
Hence, $u_t^{d+1}(\lambda_j+\tilde z\sqrt{{t \alpha_j}/{\alpha}})$ must be maximally relevant for all $\tilde z$ located above the curve $\gamma_j$. Again, the analogous argument choosing $\tilde z=-i$ proves that $u_t^j(\lambda_j+\tilde z\sqrt{{t \alpha_j}/{\alpha}})$ is maximally relevant for all $\tilde z$ that lie below $\gamma_j$.

Ultimately, we obtain
\begin{align*}
U_t\left(\lambda_j + \tilde z \sqrt{\frac {t\alpha_j} {\alpha}}\right)
&\sim \frac {\alpha_j} {\alpha} \left(\frac 1 \alpha_j \log\left|\prod_{k\neq j}(\lambda_k -\lambda_j)\right|+\frac 1 2 \log \left |\frac {t\alpha_j}{\alpha} \right| +\Psi(\tilde z) \right),
\end{align*}
where $\Psi(\tilde z)$ is the logarithmic potential of the standard semicircle law as defined in \eqref{eq:Psi}.
Hence, $$\tilde U_t^j(\tilde z):=\int_\C \log|\tilde z-w|d\tilde \mu_t^j(w)=U_t\left(\lambda_j + \tilde z \sqrt{\frac {t\alpha_j} {\alpha}}\right)-\frac 1 2 \log\left|\frac {t\alpha_j} {\alpha} \right|$$ and $$\tilde U_t^j(\tilde z)+\frac{\alpha-\alpha_j}{2\alpha}\log\left|\frac {t\alpha_j} {\alpha} \right|\to \frac {\alpha_j}{\alpha} \left(\Psi(\tilde z)+\frac{1}{\alpha_j}\log\left|\prod_{k\neq j}(\lambda_k -\lambda_j)\right|\right)\quad\text{as } t\to 0.$$ Using the same reasoning as in the proof of Theorem \ref{thm:measure} once more, we obtain vague convergence of the sequence $\tilde \mu_t^j$ as stated. Note that the term $\frac {1} {\alpha} \log|\prod_{k\neq j}(\lambda_k -\lambda_j)|$ does not change the result, as it disappears when applying the Laplacian.
\end{proof}

\begin{corollary}\label{cor:small_t}
	As in Lemma \ref{lem:simple_B}, fix $j\le d$, let $r>2\sqrt{\alpha_j/\alpha}$, and $t>0$ be sufficiently small. Then, the maximally relevant saddle point $u_t^*$ is univalued on $\C\setminus \cup_j \gamma_j$, where $\gamma_j$ are curves between the two branching points in $B\cap B_{r\sqrt t}(\lambda_j)$. In particular, the limiting measure $\mu_t$ is supported on these curves $\gamma_j$ with mass $\mu_t(B_{ {r\sqrt t} }(\lambda_j))=\frac{\alpha_j}{\alpha}$.
\end{corollary}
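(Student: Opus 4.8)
\emph{Plan.} I would localize $\supp\mu_t$ with Lemma~\ref{lem:small_t}, pin down the branch structure inside each small disk $B_{c\sqrt t}(\lambda_j)$ using Lemmas~\ref{lem:simple_B} and~\ref{lem:u_t_small_t}, upgrade the switching curve found in the proof of Theorem~\ref{thm:small_t} to the arc $\gamma_j$ joining the two branch points, and finally obtain the exact mass by passing to the limit in the finite-$n$ identity of Lemma~\ref{lem:P_t^n roots}.

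\emph{Localization and branch structure.} Fix $C_{\max}$ so large that $\supp\mu_t\subset B_{C_{\max}}(0)$ for all small $t$ (by Lemma~\ref{lem:P_t^n roots}), and let $c>0$ be the constant of Lemma~\ref{lem:small_t}. That lemma gives $\mu_t(D)=0$ for $D=\{z:|z|<C_{\max},\ \min_j|z-\lambda_j|>c\sqrt t\}$, so $\supp\mu_t\subseteq\bigcup_{j=1}^d B_{c\sqrt t}(\lambda_j)$, a disjoint union once $t$ is small enough; moreover on $D$ the maximally relevant saddle point equals the branch $u_t^{d+1}(z)$ lying within $\tfrac c2\sqrt t$ of $z$, which is single-valued and analytic on $D$ since $D\cap B=\emptyset$. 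Because $c>2\ge 2\sqrt{\alpha_j/\alpha}$, Lemma~\ref{lem:simple_B} (applied with radius $c$, and with the given $r$) shows that for $t$ small $B\cap B_{c\sqrt t}(\lambda_j)$ consists of exactly the two simple branch points $p_\pm$ already lying in $B_{r\sqrt t}(\lambda_j)$, while Lemma~\ref{lem:u_t_small_t} shows that the only branches able to coalesce near $\lambda_j$ are $u_t^j$ and $u_t^{d+1}$, so $p_+$ and $p_-$ induce the \emph{same} transposition $(u_t^j\;u_t^{d+1})$ of branches.

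\emph{The arc $\gamma_j$ and single-valuedness of $u_t^*$.} Rescaling $z=\lambda_j+\tilde z\sqrt{t\alpha_j/\alpha}$ as in the proof of Theorem~\ref{thm:small_t} and using $G(z,u_t^{d+1}(z))-G(z,u_t^j(z))\sim\tfrac{\alpha_j}{\alpha}H(\tilde z)$ together with $\partial_{\tilde y}\tfrac12H(\tilde x+i\tilde y)>0$ (see~\eqref{eq:function_H}--\eqref{eq:function_H_derivative}), one obtains that the nodal set of $G(\cdot,u_t^{d+1})-G(\cdot,u_t^j)$ inside $B_{c\sqrt t}(\lambda_j)$ is a single real-analytic arc close to the segment $[\lambda_j-2\sqrt{t\alpha_j/\alpha},\lambda_j+2\sqrt{t\alpha_j/\alpha}]$, across which $u_t^*$ switches from $u_t^{d+1}$ (on the side $\Im\tilde z>0$) to $u_t^j$ (on the side $\Im\tilde z<0$), while $u_t^*=u_t^{d+1}$ elsewhere in $B_{c\sqrt t}(\lambda_j)$ by Lemma~\ref{lem:sp_nbh}; by Lemma~\ref{lem:domain} this arc cannot have a free endpoint in $\C\setminus B$, hence runs from $p_-$ to $p_+$. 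Call it $\gamma_j$; it lies in $B_{r\sqrt t}(\lambda_j)$ once $t$ is small, since $p_\pm$ and the arc stay within $(2+o(1))\sqrt{t\alpha_j/\alpha}$ of $\lambda_j$ while $r\sqrt t>2\sqrt{t\alpha_j/\alpha}$. Now $B_{c\sqrt t}(\lambda_j)\setminus\gamma_j$ is connected and a loop encircling $\gamma_j$ (hence both $p_\pm$) has monodromy $(u_t^j\;u_t^{d+1})^2=\mathrm{id}$, so $u_t^{d+1}$ extends single-valuedly over $B_{c\sqrt t}(\lambda_j)\setminus\gamma_j$ and, continued around an endpoint $p_\pm$, turns into $u_t^j$ — exactly matching the switch of $u_t^*$. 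Combined with the previous paragraph this shows that $u_t^*$ extends to a single-valued analytic function on $\C\setminus\bigcup_j\gamma_j$; hence $U_t=\Re\,g(\cdot,u_t^*)$ is harmonic there, so $\mu_t=\tfrac1{2\pi}\Delta U_t$ is supported on $\bigcup_j\gamma_j$ (and by Corollary~\ref{cor:density} carries positive mass on each $\gamma_j$).

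\emph{The mass, and the main obstacle.} For $t$ small the balls $B_{r\sqrt t}(\lambda_k)$ are disjoint and $\gamma_j\subset B_{r\sqrt t}(\lambda_j)$, so $\mu_t(B_{r\sqrt t}(\lambda_j))=\mu_t(\gamma_j)$. Pick $\rho\in\bigl(2\sqrt t,\tfrac12\min_{k\ne j}|\lambda_j-\lambda_k|\bigr)$ with $\mu_t(\partial B_\rho(\lambda_j))=0$ (possible because $\supp\mu_t\cap B_{c\sqrt t}(\lambda_j)=\gamma_j$ is a compact subset of the interior of $B_{2\sqrt t}(\lambda_j)$). For $t$ small and $n$ large, $2\sqrt t\sqrt{1+\tfrac1{2n\alpha}}<\rho$ and the balls $B_{2\sqrt t\sqrt{1+1/(2n\alpha)}}(\lambda_k)$ are disjoint, so by Lemma~\ref{lem:P_t^n roots} the ball $B_\rho(\lambda_j)$ contains exactly the $j$-th of them and meets none of the others, whence $\mu_{n,t}(B_\rho(\lambda_j))=\alpha_j/\alpha$. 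Letting $n\to\infty$ with $\mu_{n,t}\Rightarrow\mu_t$ and applying the Portmanteau theorem gives $\mu_t(B_\rho(\lambda_j))=\alpha_j/\alpha=\mu_t(\gamma_j)=\mu_t(B_{r\sqrt t}(\lambda_j))$. The main obstacle is the middle paragraph: one must verify that $\gamma_j$ is a \emph{single} arc between the two branch points and that no additional switching of $u_t^*$ takes place in the annular region between the rescaled window $B_{2-\eps}(0)$ of Theorem~\ref{thm:small_t} and $\partial B_{c\sqrt t}(\lambda_j)$ (handled via the definite sign of $G(\cdot,u_t^{d+1})-G(\cdot,u_t^j)$ away from the branch cuts, together with Lemma~\ref{lem:domain}), and carry out the monodromy bookkeeping that turns the discontinuity of $u_t^*$ across $\gamma_j$ into single-valuedness on $\C\setminus\bigcup_j\gamma_j$. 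The exact value $\alpha_j/\alpha$ cannot be extracted from the vague convergence of Theorem~\ref{thm:small_t} alone, which is why the finite-$n$ identity of Lemma~\ref{lem:P_t^n roots} is used.
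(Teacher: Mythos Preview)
Your proposal is correct and follows essentially the same route as the paper: localization via Lemma~\ref{lem:small_t} and Lemma~\ref{lem:P_t^n roots}, identification of the switching curve through the $H(\tilde z)$ asymptotics from the proof of Theorem~\ref{thm:small_t}, use of Lemma~\ref{lem:simple_B} for the two simple branch points, Lemma~\ref{lem:sp_nbh} to exclude further switches, and the mass via Lemma~\ref{lem:P_t^n roots}. Your version is somewhat more explicit than the paper's on two points --- the monodromy bookkeeping (squaring the transposition $(u_t^j\;u_t^{d+1})$ to get single-valuedness on $\C\setminus\bigcup_j\gamma_j$) and the Portmanteau argument for passing the finite-$n$ mass identity to the limit --- but there is no substantive difference in strategy.
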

Note that $2\sqrt {\alpha_j/\alpha}$ is precisely the width of the semicircle law as in Theorem \ref{thm:small_t} and improves upon the non-optimal bound $\lesssim 2$ given in Lemma \ref{lem:P_t^n roots}.
\begin{proof}
Using \eqref{eq:ut_quant_d+1} and \eqref{eq:ut_quant_j}, and quantifying the asymptotics of \eqref{eq:some_calculation} as in the proof of Theorem \ref{thm:large_t}, we show that one part $\gamma_j\subseteq \mathcal{D}_t^c$ of the curve tends to be horizontal between the branching points.
Let $z=\lambda_j+\tilde z\sqrt{\frac{t\alpha_j }{\alpha}}\not\in B$ within some simply connected domain $D\subseteq \C\setminus B$, e.g.~$D=B_{c\sqrt t}(\lambda_j)\setminus \gamma_B$ where $\gamma_B$ are two straight branch cuts from the two branching points in $B\cap B_{c\sqrt t } (\lambda_j)$ to infinity. 
Inserting the asymptotics \eqref{eq:ut_quant_d+1} and \eqref{eq:ut_quant_j}, we obtain again locally uniform convergence of harmonic functions
\begin{align*}
G(z,u_t^{d+1}(z))-G(z,u_t^{j}(z)) &=\frac{1}{\alpha}\sum_{k\neq j}\alpha_k \log\left|1+\sqrt{\frac{t\alpha_j}{\alpha}}\frac{u^+(\tilde z)-u^-(\tilde z)}{\lambda_j-\lambda_k}+o(\sqrt t)\right|
	 +\frac{\alpha_j}{\alpha}\log\left|u^+(\tilde z)^2+o(1)\right|\\
	 &+\frac{\alpha_j}{2\alpha}\Re(u^-(\tilde z)^2-u^+(\tilde z)^2)+o(1)\\
	 &=\frac{\alpha_j}{\alpha}H(\tilde z)+o(1), \quad \text{ as }t\to 0,
\end{align*}
where the function $H$ was defined in \eqref{eq:function_H}. It holds that $H(\tilde z)=0$ if $\tilde z\in (-2,2)$ and
	\begin{align*}
		\begin{cases}
			\lim_{\tilde y\downarrow 0}H(\tilde x+i\tilde y)<0\quad \text{for }|\tilde x|>2\\
			\lim_{\tilde y\uparrow 0}H(\tilde x+i\tilde y)>0\quad \text{for }|\tilde x|>2.\\
		\end{cases}
	\end{align*}
Together with the fact that $\frac{\partial}{\partial \tilde y}H(\tilde x +i\tilde y)>0$, as we have seen in \eqref{eq:function_H_derivative}, this yields that the function $\tilde y\mapsto H(\tilde x+i\tilde y)$, for $ |\tilde x|>2$, can only have one zero in the positive (and negative) $\tilde y$-direction. This implies that each vertical line $\{\tilde x+i\tilde y: \tilde y\in\R \}$ inside $B_{c\sqrt t}(\lambda_j)$ is crossed at most twice by some curve in $\Gamma_{j,d+1}^V$ for a neighborhood $V=(\tilde x -\varepsilon ,\tilde x +\varepsilon)+i\R$. Further, these curves do not support mass outside the ball $B_{c\sqrt t}(\lambda_j)$ for $c>r>0$ by Lemma \ref{lem:small_t}, and hence the same must hold in the interior of $B_{c\sqrt t}(\lambda_j)$. Indeed, by Lemma \ref{lem:sp_nbh}, the branch of the maximally relevant saddle point does not switch along this curve outside $B_{c\sqrt t}(\lambda_j)$. Now if we take a simply connected domain $W$ in $\C\setminus B$ such that $W\cap B_{c\sqrt t}(\lambda_j)\neq \emptyset$ and $W\cap B_{c\sqrt t}(\lambda_j)^c\neq \emptyset$, then $\Gamma_{j,d+1}^W$ is well defined and, again by Lemma \ref{lem:sp_nbh}, the branch of maximally relevant saddle point cannot switch anywhere along $\Gamma_{j,d+1}^W$.
In particular, the mass is concentrated on the curve $\gamma_j$ between the two branching points and, after rescaling by $z\mapsto \tilde z$, it converges in Hausdorff metric to the straight line $[-2,2]$ being the nodal set of $H$.
Excluding these branch cuts, one may globally define single-valued branches $u_t^k(z)$ for any $k=1,\dots,d+1$ and hence $u_t^*(z)$ for sufficiently small $t$. The last claim $\mu_t(B_{ {c\sqrt t} }(\lambda_j))=\frac{\alpha_j}{\alpha}$ follows from Lemma \ref{lem:P_t^n roots} and the fact that $\mu_t(B_{r\sqrt t}(\lambda_j)\setminus B_{ {c\sqrt t} }(\lambda_j))=0$ as argued above.
\end{proof}

\begin{remark}
	Tracking all the constants above, one may explicitly compute the threshold $t_0$. For instance, in Lemma \ref{lem:small_t}, we found this constant to be of the form $\big(c\delta(\lambda)/(d\alpha_ {\max}\lambda_{\max})\big)^d$, which certainly is not optimal. This expression may be tracked throughout the subsequent steps as well, but we omit this technicality. We conjecture the optimal threshold to be $\frac{\delta(\lambda)^2}{2}$.
\end{remark}

\subsection{Proof of Theorem \ref{thm:PDEs}}

In our final proof, we show the PDE's for the logarithmic potential and Stieltjes transform of $\mu_t$.

\begin{proof}[Proof of Theorem \ref{thm:PDEs}]

\emph{(1)}
Recall that for $(z,t)\in\mathcal{D}\coloneq \{(z,t)\in\C^2: t=|t|e^{i\theta}\neq 0, e^{-i\theta/2} z \in \mathcal{D}_{|t|}\}$, the maximally relevant saddle point $u_t^*(z)$ is uniquely given by Lemma \ref{lem:saddle} and \eqref{eq:sp_eqn_complex}, and that it depends locally analytically on $z$ and $t$.
The complex chain rule for $\partial_z$ implies
\begin{align*}
\partial_z (g(z,u^*_t(z)))=(\partial_u g)(z, u^*_t(z)) \partial_z u^*_t(z)+(\partial_z g)(z,u^*_t(z))=(\partial_z g)(z,u^*_t(z)),
\end{align*}
since $u_t^*(z)$ is a saddle point of $g$. Hence, the right-hand side of the PDE is given by the negative square of
\begin{align*}
\partial_z U_t(z)=\frac{1}{2}\partial_z \Bigl(g(z,u^*_t(z))\Bigr)=\frac{1}{2}(\partial_z g)(z, u^*_t(z))=\frac{z-u^*_t(z)}{2t}.
\end{align*}
Similarly, the complex chain rule for $\partial_t$ yields that
$$
\partial_t(g(z,u^*_t(z)))=(\partial_u g)(z, u^*_t(z)) \partial_tu^*_t(z)+(\partial_t g)(z,u^*_t(z))=(\partial_t g)(z,u^*_t(z))$$
and the claim follows from
\begin{align*}
\partial_t U_t(z)=\partial_t\big(\Re g(z,u^*_t(z))\big)=\frac 1 2(\partial_t g)(z,u^*_t(z))=-\frac{(z-u^*_t(z))^2}{4t^2}=-\left(\frac{z-u^*_t(z)}{2t}\right)^2.
\end{align*}
\emph{(2)} The formula we have shown in part \emph{(1)} and the identity
$m_t(z)=2\partial_z U_t(z)$ in $(z,t)\in\mathcal D$ for the Stieltjes transform yield
\begin{align*}
\partial_t m_t(z)&=2\partial_t\partial_z U_t(z)=2\partial_z\partial_t U_t(z)=-2\partial_z(\partial_zU_t(z))^2=-2\partial_z\Bigl(\frac{1}{2}m_t(z)\Bigr)^2\\
&=-\frac{1}{2}\partial_z(m_t(z))^2=-m_t(z)\partial_z m_t(z).
\end{align*}
We show the second statement of \emph{(2)} in a similar manner. Here, we note that the logarithmic potential $U_t(z)$ takes real values and hence
$\partial_{\bar t} U_t(z)=\overline{\partial_t U_t(z)}$.
Again, we have
\begin{align*}
\partial_{\bar t} m_t(z)&=2\partial_{\bar t}\partial_z U_t(z)=2\partial_z\partial_{\bar t} U_t(z)=2\partial_z(\overline{\partial_t U_t(z)})\\
&=-2\partial_z(\overline{\partial_zU_t(z)})^2=-2\partial_z\Bigl(\frac{1}{2}\overline{m_t(z)}\Bigr)^2\\
&=-\frac{1}{2}\partial_z(\overline{m_t(z)})^2=-\overline{m_t(z)}\partial_z \overline{m_t(z)},
\end{align*}
which finishes the proof.
\end{proof}

\section{Simple example}\label{sec:example}
We conclude our work with an example. In the simple case where we have $d=1$, $\alpha_1=\alpha=1$ and one root, say in $\lambda_1=\lambda=a\in\C$, the polynomial power $P^n$ becomes $P^n(z)=(z-a)^n$, and it is well known that $P_t^n(z)=(\tfrac{t}{n})^{n/2}\mathrm{He}_n(\sqrt n (z-a) /\sqrt t)$. In particular, $\mu_t=\mathsf{sc}_t(\cdot-a)$ is the semicircle distribution as already mentioned in Remark \ref{rem:example_hermite}. Nevertheless, we want to consider this example in order to make the objects introduced in this paper more tangible.
The function $G(z,u)$ becomes $G(z,u)=\log|u-a|+\Re\left(\frac{(z-u)^2}{2t}\right)$ and the saddle point equation \eqref{eq:sp_eqn_2} reads $u-z+\frac{t}{u-a}=0$, hence we have two saddle points in $$u_t^{1,2}(z)=\frac 1 2 (z+a\pm\sqrt{(z-a)^2-4t}),$$ which are globally defined for all $z\in \C\setminus \big((-\infty,a-2\sqrt t]\cup[a+2\sqrt t, \infty)\big)$ (we indicate straight horizontal lines by these intervals). In particular, we see that as $t\to 0$, $$u_t^1(z)\to\frac 1 2 (z+a+z-a)=z\quad\text{ and }\quad u_t^2(z)\to\frac 1 2 (z+a-z+a)=a,$$ as shown in Lemma \ref{lem:u_t_small_t}, whereas for sufficiently large values of $t$, the saddle points are close to $$u_t^{1,2}(z)\sim u_t^\pm(z),$$ as defined in \eqref{eq:u_pm}, by Lemma \ref{lem:large_t}. In both these cases of small and large values of $t$, the maximally relevant saddle point is given by $u_t^1(z)$ for $z$ lying above the line $\gamma_a:=\{z: \Im(z)=\Im(a)\}$, and by $u_t^2(z)$ for $z$ lying below, for all $z\in \mathcal D_t$. To prove this, note that we can repeat the arguments already presented in the proof of Theorem \ref{thm:semicircle},  since the nodal set corresponding to our saddle points equals $$G(z,u_t^1(z))-G(z,u_t^2(z))=H\left(\frac{z-a}{\sqrt t}\right),$$ where the function $H$ is defined in \eqref{eq:function_H}.
Moreover, in this simple example, the maximally relevant saddle point is given by the above for \emph{all} values of $t$. Again, we can use the same line of argument we have given in the proofs of both Theorem \ref{thm:semicircle} and Theorem \ref{thm:small_t}. We omit the details.
In particular, this allows us to give an explicit formula for the logarithmic potential, namely
\begin{align*}
	U_t(z)=
	\begin{cases}
		\log \left|\frac 1 2 (z-a+\sqrt{(z-a)^2 -4t})\right|+\frac{1}{2t} \Re\left(\left(\frac 1 2 (z-a-\sqrt{(z-a)^2-4t})\right)^2\right)\quad \text{for }z\in\C_+\cup \R\\
		\log \left|\frac 1 2 (z-a-\sqrt{(z-a)^2 -4t})\right|+\frac{1}{2t} \Re\left(\left(\frac 1 2 (z-a+\sqrt{(z-a)^2-4t})\right)^2\right)\quad \text{for }z\in\C_-\cup\R
	\end{cases}
\end{align*}
 with Stieltjes transform
 \begin{align*}
 	m_t(z)=
 	\begin{cases}
 		\frac{1}{2t}(z-a-\sqrt{(z-a)^2-4t})\quad \text{for }z\in\C_+\cup \R\\
 		\frac{1}{2t}(z-a+\sqrt{(z-a)^2-4t})\quad \text{for }z\in\C_-\cup \R.\\
 	\end{cases}
 \end{align*}
 The limiting distribution $\mu_t$ is supported on $[a-2\sqrt t, a+2\sqrt t]$ with (local) density $$\rho(z)=\frac{|u_t^1(z)-u_t^2(z)|}{2\pi t}\ind_{[a-2\sqrt t, a+2\sqrt t]}(z)=\frac{|\sqrt{(z-a)^2-4t}|}{2\pi t}\ind_{[a-2\sqrt t, a+2\sqrt t]}(z)$$ as we have proven in Lemma \ref{cor:density}. Note that this is indeed a shifted semicircle distribution.

Even in the case $d=2$, the above calculations become substantially more involved and the results no longer contribute to an illustrative benefit, which is why we limit ourselves to presenting numerical simulations as shown in Figure \ref{fig:ex}.

\section*{Acknowledgements}
ZK was supported by the German Research Foundation under Germany's Excellence Strategy EXC 2044 - 390685587, Mathematics M\"{u}nster: Dynamics - Geometry - Structure. The authors have been supported by the DFG priority program SPP 2265 Random Geometric Systems.

\bibliography{BIB_det_poly}
\bibliographystyle{alpha}

\end{document}